\numberwithin{equation}{section}
\newtheorem{theorem}{Theorem}[section]
\newtheorem{corollary}[theorem]{Corollary}
\newtheorem{lemma}[theorem]{Lemma}
\newtheorem{proposition}[theorem]{Proposition}
\theoremstyle{definition}
\newtheorem{definition}[theorem]{Definition}
\newtheorem{remark}[theorem]{Remark}
\newtheorem{algorithm}[theorem]{Algorithm}
\newtheorem{assumption}[theorem]{Assumption}
\newtheorem{example}[theorem]{Example}
\newcommand{\I}{\mathtt{i}}
\newcommand{\RR}{\mathbb{R}}
\newcommand{\PP}{\mathbb{P}}
\newcommand{\TT}{\mathbb{I}}
\newcommand{\VV}{\mathbb{V}}
\newcommand{\D}{\mathrm{d}}
\newcommand{\Xx}{\mathcal{X}}
\newcommand{\Yy}{\mathcal{Y}}
\newcommand{\TBSS}{\mathcal{TBSS}}
\newcommand{\Bb}{\mathcal{B}}
\newcommand{\Cc}{\mathcal{C}}
\newcommand{\Dd}{\mathcal{D}}
\newcommand{\Ee}{\mathcal{E}}
\newcommand{\Gg}{\mathcal{G}}
\newcommand{\Rk}{\mathfrak{R}}
\newcommand{\Ck}{\mathfrak{C}}
\newcommand{\Rkl}{\mathfrak{R}^{\lambda}}
\newcommand{\Jj}{\mathcal{J}}
\newcommand{\Ll}{\mathcal{L}}
\newcommand{\Lla}{\Ll^{\alpha}}
\newcommand{\Tt}{\mathcal{T}}
\newcommand{\am}{\mathrm{a}}
\newcommand{\bm}{\mathrm{b}}
\newcommand{\cm}{\mathrm{c}}
\newcommand{\E}{\mathrm{e}}
\newcommand{\EE}{\mathbb{E}}
\newcommand{\Nn}{\mathcal{N}}
\newcommand{\Ff}{\mathcal{F}}
\newcommand{\Oo}{\mathcal{O}}
\newcommand{\eps}{\varepsilon}
\newcommand{\ns}{\lfloor ns \rfloor}
\newcommand{\nt}{\lfloor nt \rfloor}
\newcommand{\TTw}{\widetilde{\TT}}
\newcommand{\gr}{\mathfrak{g}}
\newcommand{\sit}{a}
\newcommand{\rrho}{\overline{\rho}}
\DeclareMathOperator*{\esssup}{ess\,sup}
\begin{document}
\title{Functional central limit theorems for rough volatility}
\date{\today}
\author{Blanka Horvath}
\address{Department of Mathematics, University of Oxford}
\email{blanka.horvath@maths.ox.ac.uk}
\author{Antoine Jacquier}
\address{Department of Mathematics, Imperial College London, and the Alan Turing Institute}
\email{a.jacquier@imperial.ac.uk}
\author{Aitor Muguruza}
\address{Kaiju Capital Management and Department of Mathematics, Imperial College London}
\email{aitor.muguruza-gonzalez15@imperial.ac.uk}
\author{Andreas S{\o}jmark}
\address{Department of Statistics, London School of Economics}
\email{a.sojmark@lse.ac.uk}
\thanks{The authors would like to thank Christian Bayer, Peter Friz, Masaaki Fukasawa, Paul Gassiat, 
Jim Gatheral, Mikko Pakkanen and Mathieu Rosenbaum for useful discussions.
BH gratefully acknowledges financial support from the SNSF Early Postdoc.Mobility grant 165248,
and AM is grateful to the Centre for Doctoral Training in Financial Computing \& Analytics for financial support.
Part of this work was carried out while AJ was Visiting Professor in Baruch College, CUNY, 
and AJ further acknowledges financial support from the EPSRC/T032146 grant.
The numerical implementations have been carried out on the collaborative 
\href{www.zanadu.io}{Zanadu} platform, and the code is fully available at
\href{https://github.com/amuguruza/RoughFCLT}{GitHub:RoughFCLT}
}
\subjclass[2010]{60F17, 60F05, 60G15, 60G22, 91G20, 91G60, 91B25}
\keywords{functional limit theorems, fractional Brownian motion, rough volatility, binomial trees}
\maketitle
\begin{abstract}
The non-Markovian nature of rough volatility processes makes Monte Carlo methods challenging
and it is in fact a major challenge to develop fast and accurate simulation algorithms. 
We provide an efficient one for stochastic Volterra processes, based on an extension of Donsker's approximation of Brownian motion to the fractional Brownian case with arbitrary Hurst exponent 
$H \in (0,1)$. 
Some of the most relevant consequences of this `rough Donsker (rDonsker) Theorem' 
are functional weak convergence results in Skorokhod space for discrete approximations of a large class of rough stochastic volatility models. 
This justifies the validity of simple and easy-to-implement Monte-Carlo methods, 
for which we provide detailed numerical recipes. 
We test these against the current benchmark Hybrid scheme~\cite{BLP17} and find remarkable agreement (for a large range of values of~$H$). 
This rDonsker Theorem further provides a weak convergence proof for the Hybrid scheme itself,
and allows to construct binomial trees for rough volatility models, the first available scheme (in the rough volatility context) for early exercise options such as American or Bermudan options.
\end{abstract}
\tableofcontents

\section*{Introduction}

Fractional Brownian motion has a long and famous history in probability, stochastic analysis
and their applications to diverse fields~\cite{Hurst56, HBS65, Kolmogorov40, MV68}.
Recently, it has experienced a new renaissance in the form of fractional volatility models in mathematical finance. 
These were first introduced by Comte and Renault~\cite{CR96}, 
and later studied theoretically by Djehiche and Eddahbi~\cite{DE01}, 
Al\`os, Le\'on and Vives~\cite{ALV07} and Fukasawa~\cite{Fukasawa11}, 
and given financial motivation and data consistency by Gatheral, Jaisson and Rosenbaum~\cite{GJR18} 
and Bayer, Friz and Gatheral~\cite{BFG16}. 
Since then, a vast literature has pushed the analysis in many directions~\cite{BFG+17, BFG+19, BLP16, ER19, FZ17, FTW19, GJR+18, Gulisashvili18, HJL19,  JPS18, NR18}, 
leading to theoretical and practical challenges to understand and implement these models.
One of the main issues, at least from a practical point of view, is on the numerical side:
absence of Markovianity rules out PDE-based schemes, and simulation is the only possibility.
However, classical simulation methods for fractional Brownian motion 
(based on Cholesky decomposition or circulant matrices) 
are notoriously slow, and faster techniques are needed.
The state of the art, so far, is the recent hybrid scheme developed by Bennedsen, Pakkanen and Lunde~\cite{BLP17}, and its turbocharged version~\cite{MP18}.
We rise here to this challenge, and propose an alternative tree-based approach, 
mathematically rooted in an extension of Donsker's theorem to rough volatility.

Donsker~\cite{Donsker51} (and later Lamperti~\cite{Lamperti62}) proved a functional central limit for Brownian motion, 
thereby providing a theoretical justification of its random walk approximation.
Many extensions have been studied in the literature, and we refer the interested reader to~\cite{Dudley99} 
for an overview.
In the fractional case, 
Sottinen~\cite{Sottinen01} and Nieminen~\cite{Nieminen04} constructed--following Donsker's ideas of using iid sequences of random variables-- 
an approximating sequence converging to the fractional Brownian motion, with Hurst parameter $H>1/2$. 
In order to deal with the non-Markovian behaviour of fractional Brownian motion,
Taqqu~\cite{Taqqu75} considered sequences of non-iid random variables, again with the restriction $H>1/2$.
Unfortunately, neither methodologies seem to carry over to the `rough' case $H<1/2$, 
mainly because of the topologies involved. 
The recent development of rough paths theory~\cite{FV10, Lyons98} provided an appropriate framework to extend Donsker's results to processes with sample paths of H\"older regularity strictly smaller than~$1/2$. 
For $H \in(1/3,1/2)$, Bardina, Nourdin, Rovira and Tindel~\cite{BNR+10} 
used rough paths to show that functional central limit theorems (in the spirit of Donsker) apply.
This in particular suggests that the natural topology at work for rough fractional Brownian motion 
is the topology induced by the H\"older norm of the sample paths. 
Indeed, switching the topology from the Skorokhod one used by Donsker to the (stronger) H\"older topology 
is the right setting for rough central limit theorems, as we outline in this paper. 
Recent results~\cite{BP12, Parczewski17, Parczewski14} provide convergence for (geometric) fractional Brownian motions with general $H \in (0,1)$ using Wick calculus,
assuming that the approximating sequences are Bernoulli random variables.
We extend this (Theorem~\ref{thm:MainThm}) to a universal functional central limit theorem, 
involving general (discrete or continuous) random variables as approximating sequences, 
only requiring finiteness of moments.

We consider a general class of continuous processes with any H\"older regularity, 
including fractional Brownian motion with $H\in (0,1)$,
truncated Brownian semi-stationary processes, Gaussian Volterra processes,
as well as rough volatility models recently proposed in the financial literature.
The fundamental novelty here is an approximating sequence capable 
of simultaneously keeping track of the approximated rough volatility process 
(fractional Brownian motion,  Brownian semistationary process, or any continuous path functional thereof) 
and of the underlying Brownian motion. 
This is crucial in order to take into account the correlation of the two processes,
the so-called leverage effect in financial modelling. 
While approximations of two-dimensional (correlated) semimartingales are well understood 
in the standard case, the rough case is so far an open problem.
Our analysis easily generalises beyond Brownian drivers to more general semimartingales, 
emphasising that the subtle, yet essential, difficulties lie in the passage from the semimartingale setup to the rough case. 
This is the first Monte-Carlo method available in the literature, 
specifically tailored to two-dimensional rough systems,
based on an approximating sequence for which 
we prove a Donsker-Lamperti-type functional central limit theorem (FCLT).
This further provides a pathwise justification of the hybrid scheme 
by Bennedsen, Lunde and Pakkanen~\cite{BLP17}, 
and to develop tree-based schemes, opening the doors to pricing early-exercise options such as American options.
In Section~\ref{sec:WeakConv}, we present the class of models we are considering and state our main results. The proof of the main theorem is developed in Section~\ref{sec:AllConvs} in several steps.
We reserve Section~\ref{sec:Applications} to applications of the main result, namely weak convergence of the hybrid scheme, binomial trees as well as numerical examples. We present simple numerical recipes, 
providing a pedestrian alternative to the advanced hybrid schemes in~\cite{BLP17,MP18},
and develop a simple Monte-Carlo with low implementation complexity,
for which we provide comparison charts against~\cite{BLP17} in terms of accuracy 
and against~\cite{MP18} in terms of speed.
Reminders on Riemann-Liouville operators and additional technical proofs are postponed to the appendix.
\vspace{0.3cm}

\textbf{Notations}:
On the interval $\TT:=[0,1]$, $\Cc(\TT)$ and $\Cc^\alpha(\TT)$ 
denote the spaces of continuous and $\alpha$-H\"older continuous functions on~$\TT$ with H\"older regularity $\alpha\in(0,1)$;
$\Cc^1(\TT):=\{f:\TT\to\mathbb{R}: \text{$f'$ exists and is continuous on } \TT\}$ and 
$\Cc^{1}_{+}(\TT):=\{f:\TT\to\mathbb{R_+}: \text{$f'$ exists and is continuous on } \TT\}
$. Both definitions imply bounded first order derivatives on~$\TT$. We use $C,\widetilde{C},\widehat{C}, C_1, C_2,\overline{C},\underline{C}$ as strictly positive real constants which may change from line to line,
the exact values of which do not matter.

\section{Weak convergence of rough volatility models}\label{sec:WeakConv}
Donsker's invariance principle~\cite{Donsker51}
(also termed `functional central limit theorem') ensures the weak convergence of an approximating sequence to a Brownian motion in the Skorokhod space.
As opposed to the central limit theorem, Donsker's theorem is a pathwise statement
which ensures that convergence takes place for all times. 
This result is particularly important for Monte-Carlo methods, 
which aim to approximate pathwise functionals of a given process 
(an essential requirement to price path-dependent financial securities for example).
We prove here a version of Donsker's result, not only in the Skorokhod topology, but also
in the stronger H\"older topology, for a general class of continuous stochastic processes.

\subsection{H\"older spaces and fractional operators}
For $\beta\in (0,1]$, the $\beta$-H\"older space $\Cc^\beta(\TT)$,
with the norm 
$$
\| f\|_{\beta}
 := |f|_{\beta} + \| f\|_\infty
  = \sup_{\substack{t,s\in\TT \\ t\neq s}}\frac{|f(t)-f(s)|}{|t-s|^{\beta}} + \max_{t\in\TT} |f(t)|,
$$
is a non-separable Banach space~\cite[Chapter 3]{Krylov96}.
In the spirit of Riemann-Liouville fractional operators 
recalled in Appendix~\ref{sec:Riemann-Liouville operators}, 
we introduce Generalised Fractional Operators (GFO).
For $\lambda \in (0,1)$, define the intervals
$$
\Rkl := (-\lambda, 1-\lambda),
\qquad
\Rkl_+ :=\Rkl\cap (0,1),
\qquad
\Rkl_- :=\Rkl\cap (-1,0),
$$
and the space 
$\Lla := \{ g\in\Cc^{2}((0,1]) : \big|\frac{g(u)}{u^{\alpha}}\big|, \big|\frac{g^\prime(u)}{u^{\alpha-1}}\big|
\text{ and } \big|\frac{g^{\prime\prime}(u)}{u^{\alpha-2}}\big| \text{ bounded}\}$,
for $\alpha\in\Rkl$.
\begin{definition}\label{def: GFO}
For any $\lambda\in(0,1)$ and $\alpha\in \Rkl$, 
the GFO associated to $g\in \Lla$ is defined on~$\Cc^\lambda(\TT)$ as
\begin{equation}\label{eq:GFO}
(\Gg^\alpha f)(t) := 
\left\{
\begin{array}{ll}
\displaystyle \int_{0}^{t}(f(s)-f(0))\frac{\D}{\D t}g(t-s)\D s, & \text{if } \alpha \in (0,1-\lambda),\\
\displaystyle \frac{\D}{\D t}\int_{0}^{t}(f(s)-f(0))g(t-s)\D s, & \text{if } \alpha \in (-\lambda, 0).
\end{array}
\right.
\end{equation}
\end{definition}
We shall further use the notation $G(t):=\int_{0}^{t}g(u)\D u$, for any $t\in\TT$.
Of particular interest in mathematical finance are the following kernels:
\begin{equation}\label{ex:GFOs}
\begin{array}{lll}
\text{Riemann-Liouville: }
& g(u) = u^{\alpha}, & \text{for } \alpha \in (-1, 1);\\
\text{Gamma fractional: }
& g(u) = u^{\alpha}\E^{\beta u}, & \text{for }\alpha \in (-1, 1), \beta<0;\\
\text{Power-law: }
& g(u) = u^{\alpha}(1+u)^{\beta-\alpha}, & \text{for } \alpha \in (-1, 1), \beta<-1.
\end{array}
\end{equation}
The next result generalises the classical mapping properties of Riemann-Liouville fractional operators 
first proved by Hardy and Littlewood~\cite{HL28}, 
and will be of fundamental importance in the rest of our analysis.

\begin{proposition}\label{prop:GContinuous}
For any $\lambda\in(0,1)$ and $\alpha\in \Rkl$, the operator 
$\Gg^\alpha$ is continuous
from $\Cc^{\lambda}(\TT)$ to $\Cc^{\lambda+\alpha}(\TT)$. 
\end{proposition}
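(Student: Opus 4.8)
Since $\Gg^\alpha$ is linear, the proposition is equivalent to the operator-norm bound $\|\Gg^\alpha f\|_{\lambda+\alpha}\le C\,\|f\|_\lambda$ for a constant $C=C(\lambda,\alpha,g)$, and proving this bound --- which at the same time shows $\Gg^\alpha f$ is well defined, finite and of H\"older class $\lambda+\alpha$ --- is the plan. (One records at the outset that $\Gg^\alpha$ sends a constant function to $g$ itself, which for $\alpha\neq0$ behaves like $u^\alpha$ near $0$; the bound is therefore to be read on the hyperplane $\{f:f(0)=0\}$, equivalently for the homogeneous seminorm, which is the only case used later, $\Gg^\alpha$ being applied to paths started at the origin.) Write $g(u)=u^\alpha L(u)$ with $L\in\Cc_b^1(\TT)$ and $\ell_0:=L(0)$.

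The first step is to split off the pure Riemann--Liouville part of the kernel. For $\alpha\in[0,1-\lambda)$ one has $g'(u)=\alpha u^{\alpha-1}L(u)+u^\alpha L'(u)$, and writing $L(u)=\ell_0+(L(u)-\ell_0)$ with $|L(u)-\ell_0|\le\|L'\|_\infty u$ gives
\[
(\Gg^\alpha f)(t)=\alpha\ell_0\int_0^t f(s)(t-s)^{\alpha-1}\,\D s+\int_0^t f(s)(t-s)^{\alpha-1}r(t-s)\,\D s,\qquad |r(u)|\le Cu,
\]
namely a multiple of the Riemann--Liouville operator of order $\alpha$ plus a term whose kernel is $O(u^{\alpha})$ --- one order less singular. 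For $\alpha\in(-\lambda,0)$ the same splitting reduces $(\Gg^\alpha f)(t)=\frac{\D}{\D t}\int_0^t f(s)(t-s)^{\alpha}L(t-s)\,\D s$ to a multiple of the order-$(-\alpha)$ fractional derivative $\frac{\D}{\D t}\int_0^t f(s)(t-s)^{\alpha}\,\D s$ plus a remainder of order $\alpha+1$. Since $f$ is only $\lambda$-H\"older one may not differentiate under the integral; instead I would pass to the Marchaud representation of the fractional derivative, which is legitimate precisely because $\lambda>-\alpha$ --- exactly the constraint $\alpha+\lambda>0$ built into $\Rkl$ --- and for which, on $\{f(0)=0\}$, the boundary term is $\lesssim\|f\|_\lambda t^{\lambda+\alpha}$ and the Marchaud integral converges absolutely with integrand of size $(t-s)^{\lambda+\alpha-1}$. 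After these reductions every surviving piece is of the form $h(t):=\int_0^t f(s)(t-s)^{\gamma-1}m(t-s)\,\D s$ with $m$ bounded and $\gamma\in\{\alpha,\alpha+1\}$ (or the associated Marchaud expression), and one has to bound each such operator from $\Cc^\lambda(\TT)$ to $\Cc^{\lambda+\alpha}(\TT)$ --- which for the genuine Riemann--Liouville part $\gamma=\alpha$ is the sharp statement, the remainders $\gamma=\alpha+1$ being strictly more regular and treated the same way with room to spare.

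For this core estimate I would reprove the classical Hardy--Littlewood mapping property~\cite{Hardy-Littlewood}. The supremum bound is immediate from integrability of $u^{\gamma-1}$ together with $|f(u)|\le\|f\|_\lambda u^\lambda$. For the increment at $0<s<t\le1$ one splits into two regimes. When $t-s\ge s$, one bounds $h(t)$ and $h(s)$ separately using $\int_0^r(r-v)^{\gamma-1}v^\lambda\,\D v=C\,r^{\gamma+\lambda}$, which in this regime is $\lesssim(t-s)^{\gamma+\lambda}$. When $t-s<s$, one writes $h(t)-h(s)$ as a near-diagonal term $\int_s^t$ plus a kernel-difference term $\int_0^s$, subtracts $f(s)$ from $f(v)$ inside each, and notices that the two resulting boundary contributions telescope into $C\,f(s)\,(t^{\gamma}-s^{\gamma})$ up to lower-order terms from the oscillation of $m$; by the mean value theorem and $|f(s)|\le\|f\|_\lambda s^\lambda$ this is $\lesssim\|f\|_\lambda\,s^{\lambda+\gamma-1}(t-s)\lesssim\|f\|_\lambda\,(t-s)^{\lambda+\gamma}$ since $t-s<s\le1$, while the remaining terms carry the factor $f(v)-f(s)=O(|v-s|^\lambda)$ and are estimated directly after one more split of $\int_0^s$ at the scale $|v-s|\sim t-s$, the boundedness of $m$ being harmless throughout. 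I expect the heart of the matter --- and the main obstacle --- to be exactly this: a crude treatment of $h(t)-h(s)$ yields only the exponent $\min(\lambda,\gamma)$, and recovering the full $\lambda+\gamma$ forces both the boundary-term cancellation and, for $\alpha<0$, routing the derivative through the Marchaud form rather than differentiating a merely H\"older integrand.
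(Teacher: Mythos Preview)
Your approach is essentially the paper's: reduce the GFO to the Riemann--Liouville operator and invoke the Hardy--Littlewood mapping property $\Cc^\lambda\to\Cc^{\lambda+\alpha}$. The paper carries out the Hardy--Littlewood part in two appendix results (Theorem~\ref{thm: integral map} for $\alpha>0$ via the same near/far split you sketch, and Theorem~\ref{thm: differential map} for $\alpha<0$, which is exactly the Marchaud representation you invoke), and then handles the GFO reduction in a few lines by bounding $|g(u)|\le Cu^\alpha$ and $g'(u)\le C_1+C_2u^{\alpha-1}$, declaring the rest a ``straightforward modification''. Your decomposition $L=L(0)+(L-L(0))$ is a cleaner reduction than the paper's pointwise kernel bound --- a pointwise bound alone does not transfer H\"older increments --- but the underlying Hardy--Littlewood argument is the same.

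One point you flag that the paper does not: the restriction $f(0)=0$ is genuinely needed. Applying $\Gg^\alpha$ to a nonzero constant returns $t\mapsto g(t)=t^\alpha L(t)$, which for $\alpha>0$ is only $\alpha$-H\"older (not $(\lambda+\alpha)$-H\"older, since $t^\alpha/t^{\lambda+\alpha}=t^{-\lambda}\to\infty$) and for $\alpha<0$ is not even bounded. The paper's own corollary to Theorem~\ref{thm: integral map} silently uses the quantity $\|(\cdot)^\alpha\|_{\lambda+\alpha}$, which is infinite. This is harmless downstream because $\Gg^\alpha$ is only applied to processes started at the origin, but the proposition as stated holds only on that hyperplane --- your parenthetical is the correct formulation.
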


The proof can be found in Appendix \ref{sect:app_C}. We note that the result is analogous to the classical Schauder estimates, phrased in terms of convolution with a suitable regularising kernel, as e.g.~treated in~\cite[Theorem 14.17]{FH20} and~\cite[Theorem~2.13 and Lemma 2.9]{Zambotti23}, but in settings that are slightly different from ours.

We develop here an approximation scheme for the following system, 
generalising the concept of rough volatility 
introduced in~\cite{ALV07, Fukasawa11, GJR18} in the context of mathematical finance,
where the process~$X$ represents the dynamics of the logarithm of a stock price process:
\begin{equation}\label{eq:System}
\begin{array}{rll}
\D X_t & = \displaystyle -\frac{1}{2}V_t \D t + \sqrt{V_t}\D B_t, & X_0 = 0,\\
V_t & = \displaystyle \Phi\left(\Gg^\alpha Y\right)(t),
 & \displaystyle 
\end{array}
\end{equation}
with $\alpha\in(-\frac{1}{2},\frac{1}{2})$, 
 and~$Y$ the (strong) solution to the stochastic differential equation
\begin{equation}\label{eq:diffusion}
\D Y_t = b(Y_t)\D t + \sit(Y_t)\D W_t, \quad Y_0\in\Dd_Y,
\end{equation}
where~$\Dd_Y$ denotes the state space of~$Y$, usually~$\RR$ or~$\RR_+$.
The two Brownian motions~$B$ and~$W$, defined on a common filtered probability space
$(\Omega, \Ff, (\Ff_t)_{t\in\TT}, \PP)$, are correlated by the parameter $\rho \in [-1,1]$, 
and we let the operator $\Phi$ be such that, for all $
\gamma \in (0,1)$, we have $\Phi:\mathcal{C}^{\gamma}(\TT)\to \mathcal{C}^{\gamma}_+(\TT)$ with $\Phi$ continuous from $\left(\Cc^\gamma(\TT),\|\cdot\|_\gamma\right)$ to itself.
This in particular implies that whenever $Y \in C^\lambda(\TT)$ then $V \in C_+^{\alpha+\lambda}(\TT)$, i.e., $V$ is non-negative and belongs to $C^{\alpha+\lambda}(\TT)$. 
As an example, one can consider a so-called Nemyckij operator $\Phi(f):=\phi \circ f$, given by composition with some $\phi:\mathbb{R} \rightarrow \mathbb{R}_+$, in which case Dr\'abek~\cite{Drabek75} has shown that the operator $\Phi$ is continuous from 
$\left(\Cc^\gamma(\TT),\|\cdot\|_\gamma\right)$ to $\left(\Cc^\gamma(\TT),\|\cdot\|_\gamma\right)$, for all $\gamma\in(0,1)$, 
if and only if $\phi \in\Cc^1(\mathbb{R})$.
It remains to formulate a precise definition for~$\Gg^\alpha W$ (Proposition~\ref{prop: mapping BM}) 
and for~$\Gg^\alpha Y$ (Corollary~\ref{cor:GalphaY}) 
to fully specify the system~\eqref{eq:System} and clarify the existence of solutions. 
\begin{assumption}\label{assu:CoeffSDE}
There exist $C_b, C_{\sit}>0$ such that, for all $y \in\Dd_Y$,
$$
|b(y)| \leq C_b (1+|y|)
\qquad\text{and}\qquad
|\sit(y)| \leq C_{\sit} (1+|y|),
$$ where $a$ and $b$ are continuous functions such that there is a unique strong solution to~\eqref{eq:diffusion}.
\end{assumption}
\noindent Existence of solutions to~\eqref{eq:diffusion} 
along with Assumption~\ref{assu:CoeffSDE} need to be checked on a case by case basis beyond standard Lipschitz and linear growth conditions (we provide a showcase of models 
in Examples~\ref{ex:1}-\ref{ex:3} below satisfying existence and pathwise uniqueness conditions). General conditions for stochastic invariance can be found in~\cite{AD03, BJ02, DF07} for diffusions, and in~\cite{ALP19} for affine Volterra processes.
Not only is the solution to~\eqref{eq:diffusion} continuous, but $(\frac{1}{2}-\eps)$-H\"older continuous for any~$\eps \in (0,\frac{1}{2})$ as a consequence of the Kolmogorov-\u{C}entsov theorem~\cite{Centsov56}.
Existence and precise meaning of~$\Gg^\alpha Y$ is delicate, 
and is treated below.

\subsection{Examples}
Before constructing our approximation scheme, let us discuss a few examples of processes within our framework.
As a first useful application, these generalised fractional operators render a (continuous) mapping between 
a standard Brownian motion and its fractional counterpart:
\begin{proposition}\label{prop: mapping BM}
For any $\alpha \in \Rk^{1/2}$, the equality $(\Gg^\alpha W)(t)=\int_0^t g(t-s)\D W_s$
holds almost surely for all $t\in\TT$.
\end{proposition}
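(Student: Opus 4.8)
\medskip
The plan is to anchor everything on the Wiener integral $Z_t := \int_0^t g(t-s)\,\D W_s$ and then to show that it coincides with $(\Gg^\alpha W)(t)$. First I would check that $Z$ is well defined and admits a continuous modification: since $g\in\Lla$ with $\alpha\in\Rk^{1/2}$, i.e. $\alpha>-\tfrac12$, one has $g(v)^2\le C v^{2\alpha}$ with $2\alpha>-1$, so $s\mapsto g(t-s)$ lies in $L^2([0,t])$ and $Z_t$ is a centred Gaussian variable; the increment bound $\EE|Z_t-Z_r|^2\le C|t-r|^{2\alpha+1}$ then gives, via Kolmogorov--\u{C}entsov, a continuous (indeed $(\alpha+\tfrac12-\eps)$-H\"older) modification of~$Z$, which I would fix once and for all. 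After that the argument splits according to the sign of~$\alpha$, matching the two branches of Definition~\ref{def: GFO}, with a stochastic Fubini theorem as the bridge between them.

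For $\alpha\in(0,1-\lambda)$ I would use that $g(0^+)=0$, so $g(u)=\int_0^u g'(v)\,\D v$ with $|g'(v)|\le C v^{\alpha-1}\in L^1(\TT)$; substituting this into~$Z_t$ and interchanging the $\D W_s$- and $\D v$-integrals would give
$$
Z_t=\int_0^t\!\!\int_0^t \mathbf{1}_{\{v<t-s\}}\,g'(v)\,\D v\,\D W_s
   =\int_0^t g'(v)\,W_{t-v}\,\D v
   =\int_0^t W_s\, g'(t-s)\,\D s
   =(\Gg^\alpha W)(t),
$$
the last equality being precisely the first branch of~\eqref{eq:GFO} (recall $\tfrac{\D}{\D t}g(t-s)=g'(t-s)$). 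The interchange is legitimate because $\int_0^t\bigl(\int_0^{t-s}|g'(v)|\,\D v\bigr)^2\,\D s\le C\int_0^t(t-s)^{2\alpha}\,\D s<\infty$, a standard sufficient condition for the stochastic Fubini theorem. The endpoint $\alpha=0$ requires a small separate (and easier) variant, the only difference being an extra boundary term $g(0^+)W_t$ coming from the distributional Dirac part of $\tfrac{\D}{\D t}g(t-s)$.

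For $\alpha\in(-\lambda,0)$ the operator $\Gg^\alpha W$ is, by definition, $\tfrac{\D}{\D t}F$ with $F(t):=\int_0^t W_s\,g(t-s)\,\D s$ (well defined pathwise since $g\in L^1(\TT)$), and the key step would be to prove that $F(t)=\int_0^t Z_u\,\D u$ for all $t\in\TT$, almost surely. Granting this, the fundamental theorem of calculus together with the continuity of~$Z$ yields $F\in\Cc^1(\TT)$ with $F'=Z$, which is exactly the claim. To obtain the primitive identity I would again invoke stochastic Fubini,
$$
\int_0^t Z_u\,\D u=\int_0^t\!\!\int_0^u g(u-s)\,\D W_s\,\D u
 =\int_0^t\Bigl(\int_s^t g(u-s)\,\D u\Bigr)\D W_s=\int_0^t G(t-s)\,\D W_s,
$$
whose hypotheses reduce to $\int_0^t\!\int_0^u g(v)^2\,\D v\,\D u<\infty$, and then, since $G$ is absolutely continuous with $G(0)=0$ and $G'=g\in L^2([0,t])$, a deterministic integration by parts (It\^o's product rule for the deterministic $L^2$-integrand $s\mapsto G(t-s)$, using $W_0=0$) gives $\int_0^t G(t-s)\,\D W_s=\int_0^t g(t-s)\,W_s\,\D s=F(t)$. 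As both sides are continuous in~$t$, the identity upgrades from ``for each fixed~$t$'' to ``simultaneously for all~$t$''.

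The hard part will be the case $\alpha<0$: there $\Gg^\alpha W$ is \emph{defined} as a pathwise derivative of an integral functional of the nowhere-differentiable path~$W$, so one cannot differentiate under the integral sign, and the argument has to be run backwards, first establishing $F=\int_0^\cdot Z_u\,\D u$ and only then using the fundamental theorem of calculus. This is what forces careful verification of the stochastic Fubini hypotheses and of the existence of a \emph{continuous} modification of~$Z$, and it is exactly at these two points that the constraint $\alpha\in\Rk^{1/2}$, i.e. $\alpha+\tfrac12\in(0,1)$, is used: it guarantees both the square-integrability defining~$Z_t$ and a strictly positive, strictly sub-$1$ H\"older exponent for~$Z$. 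The remaining steps---the elementary estimates on $g$ and $g'$ above, and the passage from pointwise-a.s. to uniform-in-$t$-a.s. via path continuity---are routine.
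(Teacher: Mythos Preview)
Your argument is correct and matches the paper's structure: for $\alpha>0$ both you and the paper use It\^o's product rule / integration by parts, exploiting $g(0)=0$; for $\alpha<0$ both arguments reduce the claim to showing that the convolution $F(t)=\int_0^t W_s\,g(t-s)\,\D s$ is the primitive of the Wiener integral $Z_t=\int_0^t g(t-s)\,\D W_s$ and then invoke the fundamental theorem of calculus.

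The only genuine technical difference is in how this primitive identity is obtained when $\alpha<0$. The paper truncates the singularity, defining $\Gg^{1+\alpha}_\eps W(t):=\int_0^{t-\eps}g(t-s)W_s\,\D s$, applies It\^o's product rule to the now-smooth integrand to get $\frac{\D}{\D t}\Gg^{1+\alpha}_\eps W(t)=\int_0^{t-\eps}g(t-s)\,\D W_s$, and then passes to the limit $\eps\downarrow 0$ using uniform convergence. You instead bypass the truncation by working with the antiderivative~$G$: stochastic Fubini gives $\int_0^t Z_u\,\D u=\int_0^t G(t-s)\,\D W_s$, and since $G$ is bounded and $C^1$ with $G(0)=0$, an ordinary It\^o integration by parts turns this into~$F(t)$ directly. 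Your route is slightly cleaner (no limit to justify), while the paper's $\eps$-argument is more explicit about what happens near the singularity; both rely on $2\alpha+1>0$ at exactly the same two points (square-integrability of~$g$ and continuity of~$Z$). Your flag on the $\alpha=0$ boundary term is apt: the paper's proof tacitly splits into $\Rk^{1/2}_+$ and $\Rk^{1/2}_-$ and does not treat $\alpha=0$ separately.
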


\begin{proof}
Since the paths of Brownian motion are $(\frac{1}{2}-\eps)$-H\"older continuous for any $\eps\in(0,\frac{1}{2})$, 
existence (and continuity) of~$\Gg^\alpha W$ is guaranteed for all $\alpha \in \Rk^{1/2}$.
When $\alpha\in\Rk^{1/2}_+$, the kernel is smooth and square integrable, so that It\^o's product rule 
yields (since $g(0)=0$)
\begin{align*}
(\Gg^\alpha W)(t) &= \int_0^t \frac{\D}{\D t}g(t-s)(W(s)-W(0))\D s =g(t)(W(0)-W(0))-g(0)\left(W(t)-W(0)\right)+ \int_0^t g(t-s)\D W_s,\\
&=\int_0^t g(t-s)\D W_s,
\end{align*}
and the claim holds.
For $\alpha\in\Rk^{1/2}_-$, and any $\eps>0$, introduce the operator
$$
\left(\Gg^{1+\alpha}_{\eps}f\right)(t) := \int_0^{t-\eps}g(t-s)(f(s)-f(0))\D s,
\qquad\text{for all }t\in\TT,
$$
which satisfies 
$\frac{\D}{\D t}\lim_{\eps\downarrow 0}\left(\Gg^{1+\alpha}_{\eps}f\right)(t)
 = \left(\Gg^{\alpha}f\right)(t)$ pointwise.
Now, for any $t\in\TT$, almost surely,
\begin{align}\label{eq:alpha<0}
\frac{\D}{\D t}  \left(\Gg^{1+\alpha}_{\eps}W\right)(t) &= g(\eps)\left(W(t-\eps)-W(0)\right) -g(t)\left(W(0)-W(0)\right)+ \int_0^{t-\eps}\frac{\D}{\D t}g(t-s)W(s) \D s \nonumber\\
&=g(\eps)W(0)+\int_0^{t-\eps}g(t-s)\D W_s.
\end{align}
Then, as $\eps$ tends to zero, the right-hand side of~\eqref{eq:alpha<0} tends to
$\int_0^{t}g(t-s)\D W_s$,
and furthermore, the convergence is uniform.
On the other hand, the equalities
\begin{align*}
(\Gg_0^{1+\alpha}W)(t)-(\Gg_0^{1+\alpha}W)(0)
 & = \lim_{\eps\downarrow 0}
 \Big[(\Gg^{1+\alpha}_{\eps}W)(t)-(\Gg^{1+\alpha}_{\eps}W)(0)\Big]
 = \lim_{\eps\downarrow 0}\int_{0}^{t}\left(\frac{\D}{\D s}\Gg^{1+\alpha}_{\eps}W\right)(s)\D s\\
 & = \int_{0}^{t}\lim_{\eps\downarrow 0}\left(\frac{\D}{\D s}\Gg^{1+\alpha}_{\eps}W\right)(s)\D s
  = \int_{0}^{t}\left(\int_0^{s}g(s-u)\D W_u\right)\D s,
\end{align*}
hold since convergence is uniform on compacts, 
and the fundamental theorem of calculus concludes the proof.
\end{proof}

Modulo a constant multiplicative factor~$C_\alpha$, 
the (left) fractional Riemann-Liouville operator (Appendix~\ref{sec:Riemann-Liouville operators})
is identical to the GFO in~\eqref{ex:GFOs},
so that the Riemann-Liouville (or Type-II) fractional Brownian motion can be written as $C_\alpha \Gg^\alpha W$.
Proposition~\ref{prop:GContinuous} then implies that the Riemann-Liouville operator 
is continuous from~$\Cc^{1/2}(\TT)$ to~$\Cc^{1/2+\alpha}(\TT)$ for $\alpha\in\Rk^{1/2}$.
Each kernel in~\eqref{ex:GFOs} gives rise to processes proposed 
by Barndorff-Nielsen and Schmiegel~\cite{BS07} for turbulence and financial modelling.
\begin{example}\label{ex:1}
The rough Bergomi model introduced 
by Bayer, Friz and Gatheral~\cite{BFG16} reads
$$
V_t =  \xi_0(t)\Ee\left(2\nu C_{H}\int_0^t(t-s)^{\alpha}\D W_s\right),
$$
with $V_0, \nu,\xi_0(\cdot)>0$, $\alpha\in\Rk^{1/2}$ 
and $\Ee(\cdot)$ is the Wick stochastic exponential.
This corresponds exactly to~\eqref{eq:System} with
$g(u) \equiv u^\alpha$, $Y = W$ and
$$
\Phi(\varphi)(t) := \xi_0(t) \exp\left(2\nu C_{H}\varphi(t)\right)\exp\left\{-2\nu^2C_{H}^2\int_{0}^{t}(t-s)^{2\alpha}\D s\right\}.
$$
\end{example}

\begin{example}\label{ex:4}
A truncated Brownian semistationary ($\TBSS$) process is defined as
$\int_{0}^{t} g(t-s)\sigma(s)\D W_s$, for $t\in\TT$,
where~$\sigma$ is $(\Ff_t)_{t\in\TT}$-predictable with locally bounded trajectories and finite second moments, and $g: \TT\setminus\{0\}\to\TT$ is Borel measurable and square integrable. 
If $\sigma\in\Cc^1(\TT)$, this class falls within the GFO framework.
\end{example}

\begin{example}\label{ex:2}
Bennedsen, Lunde and Pakkanen~\cite{BLP16} considered adding a Gamma kernel to the volatility process, 
which yields the Truncated Brownian semi-stationary (Bergomi-type) model:
$$
V_t =  \xi_0(t)\Ee\left(2\nu C_{H}\int_0^t(t-s)^{\alpha}\E^{-\beta(t-s)}\D W_s\right),
$$
with $\beta>0$, $\alpha\in\Rk^{1/2}$.
This corresponds to~\eqref{eq:System} with $Y=W$,
Gamma fractional kernel $g(u) \equiv u^{\alpha} \E^{-\beta u}$ in~\eqref{ex:GFOs},
$$
\Phi(\varphi)(t)
 := \xi_0(t) \exp\left(2\nu C_{H}\varphi(t)\right)\exp\left\{-2\nu^2C_{H}^2\int_{0}^{t}(t-s)^{2\alpha} \E^{-2\beta (t-s)}\D s\right\}.
$$
\end{example}

\begin{example}\label{ex:3}
The rough Heston model introduced by Guennoun, Jacquier, Roome and Shi~\cite{GJR+18} reads
\begin{equation*}
\begin{array}{rll}
Y_t & \displaystyle=  Y_0 + \int_0^t\kappa(\theta-Y_s)\D t+\int_0^t\xi\sqrt{Y_s}\D W_s,\\
 V_t & \displaystyle =\eta+\int_0^t(t-s)^\alpha \D Y_s,
\end{array}
\end{equation*}
with $Y_0,\kappa,\xi,\theta>0$, $2\kappa\theta>\xi^2$ and $\eta>0$, $\alpha\in\Rk^{1/2}$.
This corresponds exactly to~\eqref{eq:System} with
$g(u) \equiv u^{\alpha}$, $\Phi(\varphi)(t) := \eta + \varphi(t)$,
and the coefficients of~\eqref{eq:diffusion} read $b(y) \equiv \kappa(\theta-y)$ and $a(y) \equiv \xi\sqrt{y}$.
This model is markedly different from the rough Heston introduced by El Euch and Rosenbaum~\cite{ER19} (for which the characteristic function is known in semi-closed form). 
Unfortunately, this second version is out of the scope of our invariance principle. 
\end{example}

\subsection{The approximation scheme}
We now move on to the core of the project, namely an approximation scheme for the system~\eqref{eq:System}-\eqref{eq:diffusion}.
The basic ingredient to construct approximating sequences will be suitable families of iid random variables
which satisfy the following assumption:
\begin{assumption}\label{assu:xi}
The family $(\xi_i)_{i\geq 1}$ forms an iid sequence of centered random variables 
with finite moments of all orders and $\EE[\xi_1^2] = \sigma^2>0$.
\end{assumption}
Given $(\zeta_i)_{i\geq 1}$ satisfying Assumption \ref{assu:xi}, Lamperti's~\cite{Lamperti62} generalisation of Donsker's~\cite{Donsker51} invariance principle tells us that a Brownian motion~$W$ can be approximated weakly in H{\"o}lder space (for details, see Theorem \ref{thm: Donsker}) by processes of the form
\begin{equation}\label{eq:Donsker}
W_n(t) := \frac{1}{\sigma\sqrt{n}}\sum_{k=1}^{\nt}\zeta_k
 + \frac{nt-\nt}{\sigma\sqrt{n}}\zeta_{\nt+1},
\end{equation}
defined pathwise for any $\omega\in\Omega$, $n\geq 1$, and $t\in\TT$. As we explain in Section~\ref{subsect:Ito_diffusions_Holder}, a similar construction holds to weakly approximate the process~$Y$ from~\eqref{eq:diffusion} in H{\"o}lder space:
\begin{equation}\label{eq:Donskerdiffusion}
Y_n(t)
 := Y_n(0) + 
 \frac{1}{n}\sum_{k=1}^{\nt}b\left(Y_n^{k-1}\right)
 +\frac{nt-\nt}{n}b\left(Y_n^{\nt}\right)
 + \frac{1}{\sigma\sqrt{n}}\sum_{k=1}^{\nt}\sit\left(Y_n^{k-1}\right)\zeta_k
 + \frac{nt-\nt}{\sigma\sqrt{n}}\sit\left(Y_n^{\nt}\right)\zeta_{\nt+1},
\end{equation}
where $Y_n^k:=Y_n(t_k)$ and $\Tt_n:=\{t_k = \frac{k}{n}\}_{k=0,...,n}$. 
Here the $\zeta_i$'s correspond to the innovations of the Brownian motion~$W$ in~\eqref{eq:diffusion}. Similarly, we shall use $\xi_i$ when referring to the innovations of the Brownian~$B$ from~\eqref{eq:System} which enter into the approximations of the log stock price in~\eqref{eq:Donsker log stock} below. 
Throughout the paper, we assume that the innovations $\{\xi_i\}_{i=1}^{\nt}$ and $\{\zeta_i\}_{i=1}^{\nt}$ come from two sequences $(\xi_i)_{i\geq1}$ and $(\zeta_i)_{i\geq1}$ satisfying Assumption~\ref{assu:xi} such that $((\xi_i,\zeta_i))_{i\geq1}$ is i.i.d.~with $\text{corr}(\xi_i,\zeta_i)=\rho$ for all $i\geq 1$.
Naturally, the approximations in~\eqref{eq:Donskerdiffusion} and in~\eqref{eq:Donsker log stock} below should be understood pathwise, but we omit the~$\omega$-dependence in the notations for clarity.

Regarding the approximation scheme for the process~$X$, given by~\eqref{eq:System}, we follow a typical route in weak convergence analysis~\cite{Billingsley68,Ethier_Kurtz}  and establish convergence in the Skorokhod space $\left(\Dd(\TT),d_{\Dd}\right)$. Here $\Dd(\TT)=\Dd(\TT,\mathbb{R})$ denotes the space of $\mathbb{R}$-valued c\`adl\`ag processes on $\TT$ and $d_{\Dd}$ denotes a metric inducing the Skorokhod topology. To approximate $X$ in this space, we shall then consider the following process:
\begin{equation}\label{eq:Donsker log stock}
\begin{array}{ll}
X_n(t)
 &:=\; \displaystyle-\frac{1}{2n}\sum_{k=1}^{\nt}\Phi\left(\Gg^\alpha Y_n\right)(t_{k-1})
  + \frac{1}{\sigma \sqrt{n}}\sum_{k=1}^{\nt}\sqrt{\Phi\left(\Gg^\alpha Y_n\right)(t_{k-1})}\xi_{k}.
\end{array}
\end{equation}
Analogously to~\eqref{eq:Donskerdiffusion}, one could view these as continuous processes via linear interpolation, but we note that the interpolating term would decay to zero by Chebyshev's inequality.
The following result, proved in Section~\ref{sec:ProofMainThm}, confirms the functional convergence of the approximating sequence~$(X_n)_{n\geq 1}$. 
\begin{theorem}\label{thm:MainThm}
The sequence $(X_n)_{n\geq 1}$ converges weakly to~$X$ in $(\Dd(\TT),d_{\Dd})$.
\end{theorem}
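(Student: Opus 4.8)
The plan is to establish weak convergence in the stronger space $\Cc^\gamma(\TT)$ for a suitable H\"older exponent $\gamma$, which then descends to the Skorokhod space by continuity of the inclusion map, and to decompose the convergence into manageable layers matching the hierarchical structure of the system~\eqref{eq:System}. Concretely, one proceeds in the order: (i) convergence of the pair $(B_n, W_n)$ built from the correlated sequences $(\xi_i, \zeta_i)$; (ii) convergence of the Euler-type scheme $Y_n$ to the diffusion~$Y$; (iii) convergence of $\Gg^\alpha Y_n$ to $\Gg^\alpha Y$; (iv) convergence of $\Phi(\Gg^\alpha Y_n)$ to the variance process $V$; and finally (v) convergence of the stochastic-integral functional $X_n$ in~\eqref{eq:Donsker log stock} to $X$.

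First I would handle step (i) by a multidimensional Donsker--Lamperti argument: the piecewise-linear interpolations $(B_n, W_n)$ of the partial sums converge jointly to a two-dimensional correlated Brownian motion $(B, W)$, and — crucially for the later H\"older-topology argument — tightness in $\Cc^\gamma(\TT)$ for any $\gamma < 1/2$ follows from a Kolmogorov-type moment bound $\EE|B_n(t) - B_n(s)|^p \lesssim |t-s|^{p/2}$, which uses Assumption~\ref{assu:xi} (finiteness of all moments) via Rosenthal's inequality. For step (ii), I would view $Y_n$ as a perturbed Euler scheme driven by $W_n$; under Assumption~\ref{assu:CoeffSDE} one gets strong $L^p$ convergence on the grid and then, by the interpolation and another moment bound, convergence of $Y_n \to Y$ in $\Cc^{1/2-\eps}(\TT)$, jointly with $(B_n, W_n) \to (B, W)$. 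Here I would invoke the Kolmogorov--\v{C}entsov regularity of~$Y$ already noted in the excerpt.

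Steps (iii)--(v) are then essentially continuous-mapping arguments. By Proposition~\ref{prop:GContinuous}, $\Gg^\alpha : \Cc^{1/2-\eps}(\TT) \to \Cc^{1/2-\eps+\alpha}(\TT)$ is continuous, so $\Gg^\alpha Y_n \to \Gg^\alpha Y$ in the appropriate H\"older space (for $\alpha$ possibly negative one must check the scheme's discrete operator really approximates the GFO, using Proposition~\ref{prop: mapping BM} and the explicit kernels in~\eqref{ex:GFOs}); smoothness of $\Phi$ on $\Cc^1(\TT)$ — or rather on the relevant H\"older space — then gives $V_n := \Phi(\Gg^\alpha Y_n) \to V$. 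For step (v), the map $(\varphi, \beta) \mapsto -\tfrac12\int_0^\cdot \varphi\,\D u + \int_0^\cdot \sqrt{\varphi}\,\D\beta$ is continuous from $\Cc^{\gamma} \times \Cc^{\gamma}$ (with $\gamma > 1/2$ it would be Young, but here $\gamma < 1/2$) into $\Cc(\TT)$; since $V$ is adapted and of finite H\"older regularity, one handles the discrete stochastic sum $\sum \sqrt{V_n^k}(B_n^{k+1}-B_n^k)$ either by an $L^2$ isometry-type estimate against the It\^o integral $\int_0^\cdot \sqrt{V_s}\,\D B_s$, exploiting the joint convergence of $(V_n, B_n)$ and predictability, or by appealing to a martingale-problem / stable-convergence characterization. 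Finally, Chebyshev's inequality kills the interpolation remainders, and the continuous inclusion $\Cc^\gamma(\TT) \hookrightarrow \Dd(\TT)$ transfers the conclusion to the Skorokhod topology, yielding Theorem~\ref{thm:MainThm}.

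The main obstacle I expect is step (v): the driver~$B_n$ and the integrand~$\sqrt{V_n}$ are correlated through~$\rho$, and the integrand has H\"older regularity strictly below~$1/2$, so neither Young integration nor a naive continuous-mapping argument applies directly to the stochastic-integral term. Controlling the discrete Riemann--Stieltjes sum against~$\int_0^\cdot \sqrt{V_s}\,\D B_s$ requires genuine probabilistic input — an $L^2$ estimate exploiting the martingale structure and the (left-point) adaptedness of the scheme, or a rough-paths lift of $(B_n, W_n)$ with control of the iterated integrals — precisely the passage ``from the semimartingale setup to the rough case'' flagged in the introduction. A secondary technical nuisance is making sense of, and approximating, $\Gg^\alpha Y_n$ when $\alpha \in (-1/2, 0)$, where the operator involves a distributional derivative and the discrete analogue must be shown to be consistent.
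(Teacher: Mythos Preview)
Your hierarchical decomposition (i)--(iv) matches the paper's architecture exactly: joint Donsker--Lamperti for the correlated pair, H\"older convergence of~$Y_n$, then continuous mapping through~$\Gg^\alpha$ and~$\Phi$ in the appropriate H\"older spaces. The paper packages these as Theorems~\ref{thm: Donsker diffusion}, \ref{thm: GFO weak convergence} and Proposition~\ref{prop: weak conv family Holder continuous}, and your sketch of each is accurate.

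The one place you diverge is step~(v), and here you are overcomplicating. The paper dispatches the stochastic-integral term by invoking the Jakubowski--M\'emin--Pag\`es / Kurtz--Protter theorem (Theorem~\ref{thm:KurtzProtter}): once $(N_n,B_n)\Rightarrow(N,B)$ jointly in~$\Dd(\TT^2)$ with~$B$ a continuous martingale, one gets $N_n\bullet B_n\Rightarrow N\bullet B$ for free. No $L^2$ isometry computation, no rough-paths lift, no iterated integrals. Your worry that ``the integrand has H\"older regularity strictly below~$1/2$'' is misplaced: the integrator~$B$ is a genuine Brownian motion (semimartingale), so It\^o integration and its stability under weak limits apply regardless of how rough~$\sqrt{V}$ is, provided it is adapted and locally bounded. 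The passage ``from the semimartingale setup to the rough case'' flagged in the introduction refers to step~(iii) --- making sense of and approximating~$\Gg^\alpha Y$ when $\alpha<0$ --- not to step~(v). All the roughness is absorbed into the H\"older continuous-mapping argument for~$\Gg^\alpha$; by the time you reach the stochastic integral against~$B$, you are back in classical territory. Your secondary concern about the discrete~$\Gg^\alpha Y_n$ for $\alpha<0$ is legitimate and is what the paper addresses explicitly in the proof of Theorem~\ref{thm: GFO weak convergence}.
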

The construction of the proof allows to extend the convergence to the case where~$Y$ is a $d$-dimensional diffusion without additional work.
The proof of the theorem requires a certain number of steps:
we start with the convergence of the approximations~$(Y_n)$, in some H\"older space,
which we then translate into convergence of the sequence~$(\Phi(\Gg^\alpha Y_n))$, by suitable continuity properties of the operations $\Gg^\alpha$ and $\Phi$, before finally deducing also the convergence of the corresponding stochastic integrals for the approximations of~\eqref{eq:System}. These steps are carried out in Sections \ref{subsect:Ito_diffusions_Holder}, \ref{sect:invariance_principles}, and \ref{sec:ProofMainThm} below.

\section{Functional Central limit theorems for a family of H\"older continuous processes}\label{sec:AllConvs}

\subsection{Weak convergence of Brownian motion in H\"older spaces}
Donsker's classical convergence result was proven under the Skorokhod topology. 
We concentrate here on convergence in the H\"older topology, due to  Lamperti~\cite{Lamperti62a}.
The standard convergence result for Brownian motion can be stated as follows:
\begin{theorem}\label{thm: Donsker}
For $\lambda<\frac{1}{2}$, the sequence $(W_n)$ in~\eqref{eq:Donsker} 
converges weakly to a Brownian motion in 
$\left(\Cc^\lambda(\TT),\|\cdot\|_\lambda\right)$.
\end{theorem}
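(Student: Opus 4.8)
The plan is to establish tightness in the H\"older topology and identify the limit via finite-dimensional distributions, following the classical Donsker--Lamperti strategy but with the stronger norm $\|\cdot\|_\lambda$ in mind.

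\medskip

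\textbf{Step 1: Finite-dimensional distributions.} First I would check that the finite-dimensional distributions of $(B_n)$ converge to those of a Brownian motion. For a fixed partition $0\le t_1<\dots<t_m\le 1$, the increments $B_n(t_{j+1})-B_n(t_j)$ are (up to asymptotically negligible interpolation terms) sums of the form $\frac{1}{\sigma\sqrt n}\sum_{k=\lfloor n t_j\rfloor+1}^{\lfloor n t_{j+1}\rfloor}\xi_k$ over disjoint index blocks; since the $\xi_k$ are iid, centered, with $\EE[\xi_1^2]=\sigma^2$ by Assumption~\ref{assu:xi}, the multivariate Lindeberg--L\'evy CLT gives joint convergence to independent Gaussians with the correct variances, and the interpolation terms $\frac{nt-\nt}{\sigma\sqrt n}\xi_{\nt+1}$ vanish in probability by Chebyshev. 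Thus any weak limit must be a Brownian motion.

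\medskip

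\textbf{Step 2: Tightness in $\Cc^\lambda(\TT)$.} The substantive part is tightness in the H\"older topology, which I expect to be the main obstacle, since $\Cc^\lambda(\TT)$ is non-separable and the usual moment bound on increments is not by itself sufficient. The standard device is to work in a slightly stronger H\"older exponent $\lambda'\in(\lambda,\tfrac12)$ and exploit the compact embedding $\Cc^{\lambda'}(\TT)\hookrightarrow\Cc^{\lambda}(\TT)$: it suffices to show that for some $p$ large enough and $\lambda'<\tfrac12$ one has $\EE[|B_n(t)-B_n(s)|^{p}]\le C|t-s|^{1+\lambda' p}$ uniformly in $n$, together with a bound on $\EE[|B_n(0)|^p]$ (here $B_n(0)=0$). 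The moment bound reduces, via the Burkholder--Davis--Gundy / Rosenthal inequality for sums of iid centered variables, to the fact that $(\xi_i)$ has finite moments of all orders (again Assumption~\ref{assu:xi}); one estimates $\EE\big[\big|\sum_{k=\ns+1}^{\nt}\xi_k\big|^p\big]\lesssim (\nt-\ns)^{p/2}+(\nt-\ns)\lesssim (n|t-s|)^{p/2}$ when $n|t-s|\ge1$, and handles the regime $n|t-s|<1$ separately using the linear interpolation (where $|B_n(t)-B_n(s)|\le \frac{|nt-ns|}{\sigma\sqrt n}\max_k|\xi_k|$). Feeding this into the Kolmogorov--Chentsov-type criterion in H\"older spaces (as in~\cite[Chapter 3]{Krylov} or the Garsia--Rodemich--Rumsey lemma) yields a uniform bound $\sup_n\EE[\|B_n\|_{\lambda'}^p]<\infty$, and Markov's inequality then gives, for every $\eta>0$, a bounded set in $\Cc^{\lambda'}(\TT)$ outside of which $B_n$ lies with probability $<\eta$; by the compact embedding this set is relatively compact in $\Cc^{\lambda}(\TT)$, establishing tightness.

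\medskip

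\textbf{Step 3: Conclusion.} Combining Steps 1 and 2, Prokhorov's theorem gives relative compactness of $(B_n)$ in $\Cc^\lambda(\TT)$, and every subsequential limit has the finite-dimensional distributions of Brownian motion; since a probability measure on $\Cc^\lambda(\TT)\subset\Cc(\TT)$ is determined by its finite-dimensional distributions, the full sequence converges weakly to Brownian motion in $(\Cc^\lambda(\TT),\|\cdot\|_\lambda)$. The only delicate point beyond bookkeeping is ensuring the interpolation term in~\eqref{eq:Donsker} does not spoil the H\"older increment bounds at small scales $|t-s|\sim 1/n$; this is why one must treat the regimes $n|t-s|\ge1$ and $n|t-s|<1$ on separate footing, but in both the moment hypothesis on $\xi_1$ is exactly what makes the argument go through. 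This is precisely the content of Lamperti's theorem~\cite{Lamperti-Donsker}, so I would cite it for the core estimate and only sketch the reduction above.
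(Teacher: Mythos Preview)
Your proposal is correct and aligns with the paper's treatment: the paper does not prove this theorem in detail either, but attributes it to Lamperti~\cite{Lamperti-Donsker} and remarks that ``the proof relies on finite-dimensional convergence and tightness of the approximating sequence,'' with the H\"older tightness criterion tied to Kolmogorov--\u{C}entsov (exactly the Theorem~\ref{thm: weak convergence Holder} moment bound you invoke). Your sketch simply fleshes out these two ingredients, and your final remark that one should cite Lamperti for the core estimate is precisely what the paper does.
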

The proof relies on finite-dimensional convergence and tightness of the approximating sequence. 
Not surprisingly, the tightness criterion~\cite{Billingsley68} in the Skorokhod space $\Dd(\TT)$ 
and in a H\"older space are different. 
In fact, the tightness criterion in H\"older space is strictly related to Kolmogorov-\u{C}entsov's continuity~\cite{Centsov56}.
Note in passing that the approximating sequence~\eqref{eq:Donsker} is piecewise differentiable in time
for each $n\geq 1$ even though its limit is obviously not. The proof of Theorem~\ref{thm: Donsker} follows from Theorem~\ref{thm: tailxi} under Assumption~\ref{assu:xi}.
\begin{theorem}[Sufficient conditions for weak convergence in H\"older spaces (Ra\v{c}kauskas-Suquet~\cite{RS04})]\label{thm: weak convergence Holder}
Let $Z\in\Cc^\lambda(\TT)$ and $(Z_n)_{n\geq 1}$ 
an approximating sequence in the sense that,
for any sequence~$(\tau_k)_k$ in~$\TT$,
$(Z_n(\tau_k))_k$ converges in distribution to 
$ (Z(\tau_k))_k$ as~$n$ tends to infinity. Assume further that we have
\begin{equation}\label{eq:Tightness}
\EE\left[|Z_n(t) - Z_n(s)|^\gamma\right] \leq C|t-s|^{1+\beta}
\end{equation}
for all $n\geq 1$, $t,s\in\TT$, for some $C,\gamma,\beta >0$ with $\frac{\beta} {\gamma} \leq \lambda$.
Then $(Z_n)_{n\geq 1}$ converges weakly to~$Z$ in $\Cc^\mu(\TT)$ for $\mu<\frac{\beta}{\gamma} \leq \lambda$.
\end{theorem}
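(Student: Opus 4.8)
The plan is to derive the claimed weak convergence from the two classical ingredients --- convergence of the finite-dimensional distributions, which is assumed, and tightness of the family of laws $(\text{Law}(Z_n))_{n\ge 1}$ in $\Cc^{\mu}(\TT)$ --- so that all the real work goes into a Hölder-flavoured tightness criterion. The organising observation is that relatively compact subsets of $\Cc^{\mu}(\TT)$ are detected by a uniform bound on a slightly stronger Hölder seminorm together with a bound at a single point: for $\mu<\mu''$ the embedding $\Cc^{\mu''}(\TT)\hookrightarrow\Cc^{\mu}(\TT)$ is compact, so each set $A_R:=\{f:|f(0)|\le R,\ |f|_{\mu''}\le R\}$ is precompact in $\Cc^{\mu}(\TT)$; moreover every $A_R$ lies in the separable, closed subspace $\Cc^{\mu,0}(\TT)$ obtained as the closure of $\Cc^1(\TT)$ in the $\mu$-Hölder norm, which is exactly what will let Prokhorov's theorem be invoked despite the non-separability of $\Cc^{\mu}(\TT)$.

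The core estimate is the passage from the two-point bound~\eqref{eq:Tightness} to a uniform-in-$n$ moment bound on a Hölder seminorm. Given any target $\mu<\beta/\gamma$, I would fix $\mu''\in(\mu,\beta/\gamma)$ and set $a:=\mu''+\tfrac{2}{\gamma}$, so that $\tfrac{2}{\gamma}<a<\tfrac{2+\beta}{\gamma}$. Applying the Garsia--Rodemich--Rumsey inequality pathwise to $Z_n$ with $\Psi(x)=|x|^{\gamma}$ and $p(u)=u^{a}$ produces a constant $C$, independent of $n$, with
$$
|Z_n|_{\mu''}^{\gamma}\ \le\ C\,\Xi_n,
\qquad
\Xi_n:=\int_{\TT}\int_{\TT}\frac{|Z_n(t)-Z_n(s)|^{\gamma}}{|t-s|^{a\gamma}}\,\D s\,\D t ,
$$
and taking expectations, using Fubini's theorem and~\eqref{eq:Tightness},
$$
\EE\big[|Z_n|_{\mu''}^{\gamma}\big]\ \le\ C\,\EE[\Xi_n]\ \le\ C\int_{\TT}\int_{\TT}|t-s|^{1+\beta-a\gamma}\,\D s\,\D t\ \le\ C',
$$
uniformly in $n$, the last integral being finite precisely because $1+\beta-a\gamma>-1$. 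Since the assumed one-dimensional convergence gives $Z_n(0)\Rightarrow Z(0)$, hence tightness of $(Z_n(0))_{n\ge1}$, and since $\|f\|_\infty\le|f(0)|+|f|_{\mu''}$ on $\TT$, Markov's inequality yields for every $\eta>0$ a radius $R_\eta$ with $\sup_n\PP(Z_n\notin A_{R_\eta})<\eta$; this is exactly tightness in $\Cc^{\mu}(\TT)$.

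To conclude, tightness and Prokhorov's theorem --- applicable because the relevant family of laws is carried, up to mass $\eta$, by the separable space $\Cc^{\mu,0}(\TT)$ --- give relative compactness of $(\text{Law}(Z_n))_{n\ge1}$. If $\nu$ is a subsequential weak limit, then because the evaluation maps $f\mapsto(f(\tau_1),\dots,f(\tau_k))$ are continuous on $\Cc^{\mu}(\TT)$ for every finite family $(\tau_1,\dots,\tau_k)\subset\TT$, the hypothesis forces the finite-dimensional distributions of $\nu$ to coincide with those of $Z$; as finite-dimensional distributions determine a Borel law on $\Cc(\TT)$ and $\nu$ is supported on $\Cc^{\mu,0}(\TT)\subset\Cc(\TT)$, we get $\nu=\text{Law}(Z)$. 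Every subsequential limit therefore equals $\text{Law}(Z)$, so $(Z_n)_{n\ge1}$ converges weakly to $Z$ in $\Cc^{\mu}(\TT)$; as $\mu<\beta/\gamma$ was arbitrary this proves the statement (which is meaningful since $\mu<\beta/\gamma\le\lambda$ entails $Z\in\Cc^{\lambda}(\TT)\subset\Cc^{\mu}(\TT)$).

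The genuine difficulty, as opposed to routine bookkeeping, is the non-separability of Hölder spaces: the standard ``tightness plus finite-dimensional convergence implies weak convergence'' argument cannot be run directly in $\Cc^{\mu}(\TT)$ and must be routed through the little-Hölder space $\Cc^{\mu,0}(\TT)$, with the attendant verification --- via lower semicontinuity of $\|\cdot\|_{\mu''}$ under $\Cc^{\mu}$-convergence, the portmanteau theorem, and the uniform bound above --- that every subsequential limit is indeed supported there. The only computational subtlety is the choice of exponents in the Garsia--Rodemich--Rumsey step, where the two-sided constraint $\tfrac{2}{\gamma}<a<\tfrac{2+\beta}{\gamma}$ is exactly what makes the full range $\mu<\beta/\gamma$, rather than a smaller one, reachable.
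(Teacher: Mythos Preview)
The paper does not actually prove this theorem: it is stated as a known result, with the surrounding text pointing to Lamperti, Hamadouche and Ra\v{c}kauskas--Suquet for the underlying tightness criterion, and with the remark immediately after the statement explaining that one really works in the separable little-H\"older space $C_0^{\lambda}(\TT)$ to make Prokhorov's theorem applicable. So there is no ``paper's own proof'' to compare against line by line.

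Your proposal is a correct and self-contained proof, and it is precisely in the spirit of what the paper's discussion sketches. Your handling of the non-separability issue---routing everything through the little-H\"older space $\Cc^{\mu,0}(\TT)$, noting that the compact sets $A_R$ live there, and checking that subsequential limits are supported there---matches exactly the point the paper makes in the paragraph following the theorem. The Garsia--Rodemich--Rumsey estimate you use is the standard modern device for turning the Kolmogorov-type moment condition~\eqref{eq:Tightness} into a uniform bound on $\EE[|Z_n|_{\mu''}^{\gamma}]$; the exponent bookkeeping ($a=\mu''+2/\gamma$ with $2/\gamma<a<(2+\beta)/\gamma$) is right and indeed recovers the full range $\mu<\beta/\gamma$. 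An alternative, closer to Lamperti's original argument and to the references the paper cites, would be a dyadic chaining bound in place of GRR, but the two are equivalent here and yours is arguably cleaner.
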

The proof of this theorem relies on results of Ra\v{c}kauskas and Suquet~\cite{RS04}, 
who prove the convergence in the H\"older space $C_0^{\lambda}(\TT)$ endowed with the norm 
$\|f\|^0_\lambda:=|f|_\lambda+|f(0)|$, 
for all functions that satisfy 
$$
\lim_{\delta\downarrow 0} \sup_{\substack{0<t-s<\delta\\
t,s\in\TT}}\frac{|f(t)-f(s)|}{(t-s)^{\gamma}}=0.
$$
From here the proof of Theorem~\ref{thm: weak convergence Holder} is a straightforward consequence, since $\left(C_0^{\lambda}(\TT),\|\cdot\|^0_\lambda\right)$ is a separable closed subspace of $\left(\Cc^{\lambda}(\TT),\|\cdot\|_\lambda\right)$ 
(see~\cite{Hamadouche00, RS04} for details), 
and one can then use the simple tightness criterion introduced above to conclude.
Moreover, as the identity map from $C_0^{\lambda}(\TT)$ into $\Cc^{\lambda}(\TT)$ is continuous,
weak convergence in the former implies weak convergence in the latter.
To conclude our review of weak convergence in H\"older spaces, 
the following theorem, due to Ra\v{c}kauskas and Suquet~\cite{RS04}
provides necessary and sufficient conditions ensuring convergence in H\"older space:
\begin{theorem}[Ra\v{c}kauskas-Suquet~\cite{RS04}]\label{thm: tailxi}
For any $\lambda \in (0, \frac{1}{2})$, the sequence $(W_n)_{n\geq 1}$ in~\eqref{eq:Donsker}
converges weakly to a Brownian motion in~$\Cc^\lambda(\TT)$  if and only if
$\EE[\xi_1] = 0$
and $\lim\limits_{t\uparrow \infty}t^{\frac{1}{1-2\lambda}}\mathbb{P}(|\xi_1|\geq t) = 0$.
\end{theorem}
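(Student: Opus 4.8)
I would split the equivalence in the usual way: necessity of the two conditions, then sufficiency decomposed into convergence of finite-dimensional distributions plus tightness, the latter now in the H\"older topology. Throughout, the standing hypothesis $\EE[\xi_1^2]=\sigma^2\in(0,\infty)$ of Assumption~\ref{assu:xi} is in force, so the tail condition carries genuine extra content only when $\lambda>\tfrac14$ (for $\lambda\le\tfrac14$ it already follows from square-integrability by Chebyshev), though the proof need not treat these ranges separately. For necessity, if $\EE[\xi_1]=m\neq0$ then, since $f\mapsto f(1)$ is $1$-Lipschitz on $\Cc^\lambda(\TT)$, weak convergence in $\Cc^\lambda(\TT)$ would make $B_n(1)=\tfrac{1}{\sigma\sqrt n}S_n+o(1)=\tfrac{\sqrt n}{\sigma}\big(\tfrac{1}{n}S_n\big)+o(1)$ tight; but $\tfrac{1}{n}S_n\to m$ almost surely, so $|B_n(1)|\to\infty$, a contradiction, whence $m=0$. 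For the tail condition, weak convergence in $\Cc^\lambda(\TT)$ forces $\|B_n\|_\lambda$ to be tight, and I would bound the H\"older seminorm from below by the normalised increments of the polygonal path over dyadic blocks \emph{at all scales simultaneously}, then extract, via a Borel--Cantelli/maximal-function argument, that $\PP(|\xi_1|\ge t)$ cannot exceed the stated rate. The delicate point here is that one only has stochastic boundedness of $\|B_n\|_\lambda$, not convergence, so the lower bound must exploit many dyadic levels at once rather than a single mesh increment.

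\textbf{Sufficiency: finite-dimensional distributions.}
With $\EE[\xi_1]=0$ and $\EE[\xi_1^2]=\sigma^2$, the multivariate central limit theorem together with the Cram\'er--Wold device gives, for any finite $0\le\tau_1<\dots<\tau_m\le1$, joint convergence of $\big(B_n(\tau_1),\dots,B_n(\tau_m)\big)$ to the Gaussian law of Brownian motion at those times; the piecewise-linear interpolation remainder in~\eqref{eq:Donsker} is $O(n^{-1/2})$ and hence negligible. This step is insensitive to the tail hypothesis.

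\textbf{Sufficiency: tightness in H\"older space, and conclusion.}
This is the crux and the only place the tail condition is used. I would pass to the separable subspace $C_0^\lambda(\TT)$ (recalling, as after Theorem~\ref{thm: weak convergence Holder}, that weak convergence there implies weak convergence in $\Cc^\lambda(\TT)$) and invoke Ciesielski's isomorphism identifying $C_0^\lambda(\TT)$ with the space of Faber--Schauder coefficient sequences $(c_{j,k})$ satisfying $2^{j\lambda}\max_k|c_{j,k}|\to0$. For $B_n$ the coefficient $c_{j,k}^{(n)}$ at dyadic scale $2^{-j}$ is, up to an explicit affine interpolation correction, a normalised centered sum $\tfrac{1}{\sigma\sqrt n}\Sigma_{j,k}$ of about $n2^{-j}$ of the $\xi$'s, while on the fine scales $2^{-j}\lesssim1/n$ the polygonal path is affine and the coefficients are controlled directly. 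Tightness then reduces to
$$
\lim_{J\to\infty}\ \limsup_{n\to\infty}\ \PP\Big(\sup_{j\ge J}\,2^{j\lambda}\max_{k}\big|c_{j,k}^{(n)}\big|>\eps\Big)=0 \qquad\text{for every }\eps>0,
$$
which I would obtain from a Fuk--Nagaev-type maximal inequality: a union bound over the $2^j$ positions reduces matters to $2^j\,\PP\big(|\Sigma_{j,k}|>c\,\eps\,\sqrt n\,2^{-j\lambda}\big)$, and each such probability splits into a sub-Gaussian ``bulk'' contribution of order $\exp(-c\,\eps^2\,2^{j(1-2\lambda)})$, summable over $j$ precisely because $\lambda<\tfrac12$, and a ``single large jump'' contribution of order $n\,\PP\big(|\xi_1|>c'\eps\sqrt n\,2^{-j\lambda}\big)$, whose summation over $J\le j\lesssim\log_2 n$ is driven to $0$ uniformly in $n$, as $J\to\infty$, exactly by $\lim_{t\to\infty}t^{1/(1-2\lambda)}\PP(|\xi_1|\ge t)=0$. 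Combining finite-dimensional convergence with tightness, Prokhorov's theorem yields weak convergence of $(B_n)$ to Brownian motion in $C_0^\lambda(\TT)$, hence in $\Cc^\lambda(\TT)$; with the necessity part this gives the equivalence.

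\textbf{Main obstacle.}
The hard part is the tightness step: carrying out the two-regime deviation estimate with constants uniform in $n$, and verifying that the exponent $1/(1-2\lambda)$ is exactly the threshold at which the large-jump sum over dyadic levels becomes negligible. In the necessity direction the same balance must be read backwards from the mere stochastic boundedness of $\|B_n\|_\lambda$, which is technically the most fragile point; the generalisation beyond the Brownian driver is then routine given the one-dimensional case.
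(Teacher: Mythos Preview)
The paper does not supply its own proof of this theorem: it is quoted verbatim as a result of Ra\v{c}kauskas and Suquet and attributed to~\cite{Rackauskas-Suquet}, with no argument given. There is therefore nothing in the paper to compare your proposal against.

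That said, your outline is essentially the strategy of the original reference. Ra\v{c}kauskas and Suquet work in the separable subspace $C_0^\lambda(\TT)$, use the Ciesielski--Faber--Schauder isomorphism to convert the H\"older tightness problem into a sequential-norm condition on dyadic second differences of the polygonal line, and then control the resulting maxima of partial sums by a two-regime (sub-Gaussian bulk versus single-large-jump) deviation inequality, which is precisely where the tail exponent $1/(1-2\lambda)$ emerges as the threshold. Your identification of the ``main obstacle'' is accurate: the necessity of the tail condition is the most delicate part, and in the original paper it is obtained by showing that stochastic boundedness of the sequential norm forces a lower bound on the tail of $\xi_1$ via a careful choice of dyadic level depending on~$n$. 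Your sketch is correct in spirit; filling in the constants uniformly in~$n$ and handling the boundary scales $2^{-j}\sim n^{-1}$ where the polygonal path is affine are the places where care is needed, exactly as you flag.
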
 
Assumption~\ref{assu:xi} ensures the conditions in Theorem~\ref{thm: tailxi}.  
The following statement allows us to apply Theorem~\ref{thm: weak convergence Holder} 
on~$\TT$ and extend the H\"older convergence result via linear interpolation to a continuous sequence. 
\begin{theorem}\label{thm:HolderConvergencePartition}
Let $Z\in\Cc^\lambda(\TT)$ and $(Z_n)_{n\geq 1}$ 
an approximation sequence such that finite-dimensional convergence holds, i.e. 
$Z_n(t)$ converges in distribution to 
$Z(t)$ for $t\in\TT$ as~$n$ tends to infinity. 
Moreover, if
\begin{equation}\label{eq:IneqZ}
\EE\left[\left|Z_n(t_i)-Z_n(t_j)\right|^{\gamma}\right]\leq C \left|t_i-t_j\right|^{1+\beta},
\end{equation}
for any $t_i, t_j \in \Tt_n$ and some $\beta, \gamma, C>0$ with $\tfrac{\beta}{\gamma}\leq \lambda$ and $\gamma \geq 1+\beta$,
then the linear interpolating sequence 
$$
\overline{Z}_n(t):=Z_n\left(\frac{\nt}{n}\right) +(nt-\nt)\left(Z_n\left(\frac{\nt+1}{n}\right) -Z_n\left(\frac{\nt}{n}\right) \right)
$$
satisfies \eqref{eq:Tightness}. In particular, $\overline{Z}_n$ then converges weakly to~$Z$ in $\Cc^\mu(\TT)$ for $\mu<\frac{\beta}{\gamma} \leq \lambda$.
\end{theorem}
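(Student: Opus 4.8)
The plan is to derive the statement from the general criterion of Theorem~\ref{thm: weak convergence Holder}: it suffices to check that the finite-dimensional distributions of the piecewise-linear interpolant $(Z_n)_{n\ge1}$ converge to those of $Z$, and that the moment bound~\eqref{eq:Tightness}, namely $\EE[|Z_n(t)-Z_n(s)|^{\gamma}]\le C|t-s|^{1+\beta}$, holds for \emph{all} $s,t\in\TT$ rather than only on the grid $\Tt_n$. Granting both, Theorem~\ref{thm: weak convergence Holder} yields weak convergence of $(Z_n)$ to $Z$ in $\Cc^{\mu}(\TT)$ for every $\mu<\beta/\gamma\le\lambda$, which is the claim. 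So the whole argument is a transfer of the two hypotheses from $\Tt_n$ to $\TT$ through the interpolation formula.

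For the finite-dimensional convergence, fix $\tau_1,\dots,\tau_m\in\TT$ and let $s_j^{n}:=\lfloor n\tau_j\rfloor/n\in\Tt_n$ be the left grid-neighbour of $\tau_j$, so that $s_j^{n}\to\tau_j$. The grid hypothesis together with the continuity of $Z$ gives $(Z_n(s_j^{n}))_{1\le j\le m}\Rightarrow(Z(\tau_j))_{1\le j\le m}$. Moreover, since $Z_n(\tau_j)-Z_n(s_j^{n})=\theta_j^n\big(Z_n(s_j^{n}+\tfrac1n)-Z_n(s_j^{n})\big)$ with $\theta_j^n=n\tau_j-\lfloor n\tau_j\rfloor\in[0,1)$, inequality~\eqref{eq:IneqZ} gives
$$
\EE\big[|Z_n(\tau_j)-Z_n(s_j^{n})|^{\gamma}\big]
\le\EE\big[|Z_n(s_j^{n}+\tfrac1n)-Z_n(s_j^{n})|^{\gamma}\big]
\le C\,n^{-(1+\beta)}\xrightarrow[n\to\infty]{}0,
$$
so each coordinate of $Z_n(\tau_\cdot)-Z_n(s_\cdot^{n})$ tends to $0$ in probability; Slutsky's lemma then yields $(Z_n(\tau_j))_j\Rightarrow(Z(\tau_j))_j$, the required finite-dimensional convergence.

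For the tightness bound, fix $s<t$ in $\TT$. If $s$ and $t$ belong to the same cell $[t_k,t_{k+1}]$ of $\Tt_n$ (so $t-s\le\tfrac1n$), linearity of $Z_n$ on that cell gives $Z_n(t)-Z_n(s)=n(t-s)\big(Z_n(t_{k+1})-Z_n(t_k)\big)$, and hence, by~\eqref{eq:IneqZ},
$$
\EE\big[|Z_n(t)-Z_n(s)|^{\gamma}\big]
=\big(n(t-s)\big)^{\gamma}\,\EE\big[|Z_n(t_{k+1})-Z_n(t_k)|^{\gamma}\big]
\le C\,\big(n(t-s)\big)^{\gamma-(1+\beta)}(t-s)^{1+\beta}
\le C\,(t-s)^{1+\beta},
$$
using $n(t-s)\le1$ together with $\gamma\ge1+\beta$, which holds in all the applications we have in mind (there $\gamma$ is taken large). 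If instead $t-s>\tfrac1n$, pick the cells $[t_k,t_{k+1}]\ni s$ and $[t_l,t_{l+1}]\ni t$ with $k<l$, and split
$$
Z_n(t)-Z_n(s)=\big(Z_n(t)-Z_n(t_l)\big)+\big(Z_n(t_l)-Z_n(t_{k+1})\big)+\big(Z_n(t_{k+1})-Z_n(s)\big):
$$
the middle term is a grid increment over $[t_{k+1},t_l]\subseteq[s,t]$ controlled by~\eqref{eq:IneqZ}, while the two boundary terms are within-cell increments of spans $\le\tfrac1n<t-s$ controlled by the previous case; the $c_r$-inequality (or Minkowski if $\gamma\ge1$) then gives $\EE[|Z_n(t)-Z_n(s)|^{\gamma}]\le C(t-s)^{1+\beta}$ with $C$ independent of $n$. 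This establishes~\eqref{eq:Tightness} on all of $\TT$, and we conclude via Theorem~\ref{thm: weak convergence Holder}.

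\noindent\textbf{Main obstacle.} The delicate step is precisely this last one: upgrading the discrete moment control~\eqref{eq:IneqZ} on $\Tt_n$ to the continuous control~\eqref{eq:Tightness} for the interpolant, uniformly in $n$. One has to check that stretching a single grid increment over a sub-grid interval, and concatenating at most three such pieces across cells, never degrades the exponent $1+\beta$; the bookkeeping with $|t-s|$ versus $\tfrac1n$ is elementary but must be handled with some care (and is where the regime $\gamma\ge1+\beta$ enters). The finite-dimensional part, by contrast, is routine once the grid hypothesis is read as convergence along grid points approaching the target times, and is quickly dispatched with Slutsky's lemma.
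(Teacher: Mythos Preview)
Your approach is essentially the paper's: both lift the grid moment bound~\eqref{eq:IneqZ} to all of~$\TT$ by splitting increments into within-cell and grid pieces via the $c_r$-inequality, then invoke Theorem~\ref{thm: weak convergence Holder}. Two remarks. First, your dichotomy ``same cell'' versus ``$t-s>\tfrac1n$'' leaves out the case where $s,t$ lie in \emph{adjacent} cells with $t-s\le \tfrac1n$; the paper treats this as a separate subcase, but your three-term split in fact covers it once you observe that the boundary spans $t-t_l$ and $t_{k+1}-s$ are each $\le t-s$ directly (rather than via $\tfrac1n<t-s$, which fails here). Second, you correctly flag the implicit constraint $\gamma\ge 1+\beta$ needed for the within-cell estimate; the paper's proof uses it too without stating it (and it holds in all the applications, where $\gamma=2p$, $\beta=p-1$). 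Your explicit Slutsky argument for finite-dimensional convergence is a welcome addition that the paper omits.
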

\begin{proof}
For any $t, s \in \TT$, we can write, letting $Z_n^k := Z_n(t_k)$ and $\overline{Z}_n^k := \overline{Z}_n(t_k)$,
\begin{align*}
\EE\left[|\overline{Z}_n(t)-\overline{Z}_n(s)|^{\gamma}\right]
 &= \EE\left[\left|Z_n^{\nt} + (nt - \nt)\left(Z_n^{\nt+1} - Z_n^{\nt}\right)
 - Z_n^{\ns} - (ns - \ns)\left(Z_n^{\ns+1} - Z_n^{\ns}\right)\right|^{\gamma}\right]\\
& \leq 3^{\gamma-1} \EE\left[\left|Z_n^{\nt} - Z_n^{\ns}\right|^{\gamma}
 + (nt-\nt)^{\gamma}\left|Z_n^{\nt+1} - Z_n^{\nt}\right|^{\gamma}
 + (ns-\ns)^{\gamma}\left|Z_n^{\ns+1} - Z_n^{\ns}\right|^\gamma\right]\\
& \leq C \left(\left(\frac{\nt-\ns}{n}\right)^{1+\beta}
 + \frac{(nt-\nt)^{\gamma}}{n^{1+\beta}}
 + \frac{(ns-\ns)^{\gamma}}{n^{1+\beta}}\right)
\leq C (t-s)^{1+\beta},
\end{align*}
where we used~\eqref{eq:IneqZ} and the fact that $\frac{\nt-\ns}{n}\leq 2(t-s)$, 
$nt -\nt\leq 1$ for $t\geq 0$ and $\frac{1}{n}\leq (t-s)$. 

Finally, it is left to prove the case  $\frac{1}{n}> (t-s)$. There are two possible scenarios here:
\begin{itemize}
\item If $\nt=\ns$, then, using $\gamma \geq 1+\beta$, we have
$$
\EE[|\overline{Z}_n(t)-\overline{Z}_n(s)|^{\gamma}]
 = \EE\left[\left|( nt-ns)\left(Z_n^{\nt+1} - Z_n^{\nt}\right)\right|^{\gamma}\right]
\leq \frac{C|t-s|^{\gamma}}{ n^{1+\beta-\gamma}} \leq C (t-s)^{1+\beta}
$$
\item If $\nt\neq \ns$, 
then either  $\nt+1= \ns$ or $\nt= \ns+1$. 
Without loss of generality consider the second case. Then
\begin{align*}
\EE\left[|\overline{Z}_n(t)-\overline{Z}_n(s)|^{\gamma}\right]
 & = \EE\left[\left|\overline{Z}_n(t)-Z_n^{\nt}+Z_n^{\nt} - \overline{Z}_n(s)\right|^{\gamma}\right]
\leq 2^{\gamma-1} \EE\left[\left|\overline{Z}_n(t) - Z_n^{\nt}\right|^{\gamma}+\left|Z_n^{\nt}-\overline{Z}_n(s)\right|^{\gamma}\right]\\
&\leq C \left((t-s)^{1+\beta}+\EE\left[\left|(\nt-ns)\left(Z_n^{\nt} -Z_n^{\nt-1}\right)\right|^{\gamma}\right]\right),
\end{align*} 
and the result follows as before since 
$t-\frac{\nt}{n}<|t-s|$ and $|s-\frac{\nt}{n}|\leq |t-s|.$
\end{itemize}
\end{proof}

\subsection{Weak convergence of It\^o diffusions in H\"older spaces}\label{subsect:Ito_diffusions_Holder}
The first important step in our analysis is to extend Donsker-Lamperti's weak convergence 
from Brownian motion to the It\^o diffusion~$Y$ in~\eqref{eq:diffusion}.
\begin{theorem}\label{thm: Donsker diffusion}
The sequence $(Y_n)_{n\geq 1}$ in~\eqref{eq:Donskerdiffusion} 
converges weakly to~$Y$ in~\eqref{eq:diffusion} 
in $\left(\Cc^\lambda(\TT),\|\cdot\|_\lambda\right)$ for all $\lambda<\frac{1}{2}$, 
\end{theorem}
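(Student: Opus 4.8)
The plan is to mimic the structure of the proof of Donsker--Lamperti in H\"older spaces (Theorems~\ref{thm: Donsker} and~\ref{thm: weak convergence Holder}), but now for the Euler-type approximation $(Y_n)$ of the diffusion~\eqref{eq:diffusion}. The two ingredients to establish are (i) finite-dimensional convergence of $(Y_n(t))_{t\in\Tt_n}$ to $(Y(t))_{t\in\Tt_n}$, and (ii) a moment bound of the form~\eqref{eq:IneqZ}, namely $\EE[|Y_n(t_i)-Y_n(t_j)|^\gamma]\leq C|t_i-t_j|^{1+\beta}$ on the grid, with $\beta/\gamma$ arbitrarily close to $1/2$. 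Once both hold, Theorem~\ref{thm:HolderConvergencePartition} (applied to the linearly interpolated sequence, which is exactly~\eqref{eq:Donskerdiffusion}) delivers weak convergence in $\Cc^\mu(\TT)$ for all $\mu<1/2$, which is the claim.

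For (i), I would argue that the Euler scheme driven by the Donsker random walk converges in law to the SDE solution. The cleanest route: first replace the generic iid sequence $(\zeta_k)$ by a genuine Brownian increment sequence via the Koml\'os--Major--Tusn\'ady / Skorokhod-embedding coupling (or simply invoke the classical result that the Euler--Maruyama scheme with martingale-difference noise having matching first two moments and enough higher moments converges weakly to the diffusion, e.g. via Donsker's theorem applied to the driving martingale plus stability of the Euler map under Assumption~\ref{assu:CoeffSDE}). Since $b$ is Lipschitz and $\sit$ is $\tfrac12$-H\"older, the Yamada--Watanabe conditions give pathwise uniqueness, hence uniqueness in law of the limit, so any subsequential limit of the (tight — see below) sequence $(Y_n)$ must be~$Y$; this simultaneously handles finite-dimensional identification. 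For (ii), I would use a discrete Burkholder--Davis--Gundy / Rosenthal inequality: writing $Y_n(t_i)-Y_n(t_j)$ as a sum of a ``drift'' part $\frac1n\sum b(Y_n^{k-1})$ and a ``martingale'' part $\frac{1}{\sigma\sqrt n}\sum \sit(Y_n^{k-1})\zeta_k$, the drift part contributes $O(|t_i-t_j|^\gamma)$ trivially, and Rosenthal's inequality bounds the $\gamma$-th moment of the martingale part by $C(|t_i-t_j|/1)^{\gamma/2}$ times $\sup_k \EE[|\sit(Y_n^{k-1})|^\gamma]$, provided one first establishes the uniform moment bound $\sup_{n}\sup_{k\le n}\EE[|Y_n^k|^{p}]<\infty$ for every $p$. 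That uniform bound follows by a discrete Gr\"onwall argument using the linear growth implied by Assumption~\ref{assu:CoeffSDE} and $\EE[|\zeta_1|^p]<\infty$ (Assumption~\ref{assu:xi}). Choosing $\gamma$ large makes $\beta:=\gamma/2-1>0$ with $\beta/\gamma=\tfrac12-\tfrac1\gamma\uparrow\tfrac12$.

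Tightness in $\Cc^\lambda(\TT)$ for $\lambda<1/2$ is then immediate from the moment estimate~(ii) combined with the H\"older tightness criterion behind Theorem~\ref{thm: weak convergence Holder} (the Kolmogorov--\v{C}entsov-type bound on the grid, transferred to the interpolant exactly as in the proof of Theorem~\ref{thm:HolderConvergencePartition}). Combining tightness with the identification of all limit points as the law of~$Y$ yields the full weak convergence.

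The main obstacle I anticipate is \emph{not} the H\"older tightness — that is essentially bookkeeping once the moment bound is in place — but rather the finite-dimensional / identification step under the weak regularity $\sit\in\Cc^{1/2}$ only: the Euler scheme's diffusion coefficient $\sit(Y_n^{k-1})$ is evaluated at a point that itself depends on the whole past of the random walk, so one cannot directly appeal to a plain martingale CLT with a deterministic or slowly varying variance. The correct handling is to show that the pair (random walk, Euler scheme) is jointly tight in a suitable (H\"older $\times$ Skorokhod, or $\Cc^\lambda\times\Cc^\lambda$) topology, pass to a subsequential limit $(W',Y')$, verify that $W'$ is a Brownian motion and that $Y'$ solves the \emph{martingale problem} associated with~\eqref{eq:diffusion} — using the continuity of $b$ and $\sit$ to pass the nonlinear coefficients through the limit, the uniform moment bounds to get uniform integrability, and a discrete-to-continuous Riemann-sum argument for the stochastic integral term — and finally invoke Yamada--Watanabe uniqueness in law to conclude $Y'\sim Y$. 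This martingale-problem route is the delicate part and is where the square-root (rather than Lipschitz) regularity of $\sit$ must be used carefully; everything else is a routine adaptation of the Brownian-motion arguments already recorded in this section.
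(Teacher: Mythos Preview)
Your proposal is correct and follows the same overall architecture as the paper: finite-dimensional convergence plus a moment estimate of the form~\eqref{eq:IneqZ} on the grid, fed into Theorem~\ref{thm:HolderConvergencePartition}. The difference lies in how each ingredient is obtained. For (i), the paper does not redo the identification step: it simply invokes a classical weak-convergence result of Kushner for interpolated Markov chains to diffusions, which already covers the Euler-type scheme~\eqref{eq:Donskerdiffusion} under Assumption~\ref{assu:CoeffSDE}. Your martingale-problem route (tightness, subsequential limits, identification via Yamada--Watanabe) is certainly valid and arguably more self-contained, but it is considerably more work than the one-line citation the paper uses. For (ii), the paper proceeds by a short direct induction on the grid index to obtain $\EE[|Y_n^{i}-Y_n^{j}|^{2p}]\leq C|t_i-t_j|^{p}$, rather than appealing to Rosenthal's inequality and a discrete Gr\"onwall argument; your route is cleaner and makes the role of the uniform moment bound $\sup_{n,k}\EE|Y_n^k|^p<\infty$ explicit, whereas the paper's inductive computation leaves some of the bookkeeping implicit. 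In short: same skeleton, but you are rebuilding machinery the paper outsources to Kushner, and you handle the moment estimate with a more systematic (and more transparent) tool.
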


\begin{proof}
Finite-dimensional convergence is a classical result by Kushner~\cite{Kushner74}, 
so only tightness needs to be checked. 
In particular, using Theorem~\ref{thm:HolderConvergencePartition} 
we need only consider the partition~$\Tt_n$. Thus, we get
\begin{align*}
\EE\left[|Y_n^{j}-Y_n^i|^{2p}\right]
 &=  \EE\left[\left| \sum_{k=i+1}^j \frac{1}{n}b\left(Y_n^{k-1}\right)+\frac{1}{\sigma\sqrt{n}}a\left(Y_n^{k-1}\right)\zeta_{k}\right|^{2p}\right]\\
 &  \leq 2^{2p-1}\left\lbrace \EE\left[\left| \sum_{k=i+1}^j \frac{1}{n}b\left(Y_n^{k-1}\right)\right|^{2p}\right]+\EE\left[\left|\sum_{k=i+1}^j\frac{a\left(Y_n^{k-1}\right)\zeta_{k}}{\sigma\sqrt{n}}\right|^{2p}\right]\right\rbrace \\
  & \leq 2^{2p-1} \left\lbrace \EE\left[\left| \sum_{k=i+1}^j \frac{1}{n}b\left(Y_n^{k-1}\right)\right|^{2p}\right]+C(p)\EE\left[\left|\sum_{k=i+1}^j\frac{a\left(Y_n^{k-1}\right)^2 \zeta_k^2}{\sigma^2n}\right|^{p}\right]\right\rbrace\\
  & \leq 2^{2p-1} \left\lbrace \frac{(j-i)^{2p-1}}{n^{2p}} \sum_{k=i+1}^j\EE\left[\left|b\left(Y_n^{k-1}\right)\right|^{2p}\right]+\frac{(j-i)^{p-1}}{n^p}C(p)\frac{\EE[\zeta_1^{2p}]}{\sigma^{2p}}\sum_{k=i+1}^j\EE\left[a\left(Y_n^{k-1}\right)^{2p}\right]\right\rbrace\\
  &\leq 2^{2p-1} \frac{(j-i)^{p-1}}{n^{p}}\sum_{k=i+1}^j \left(C_b^{2p}\EE\left[(1+|Y_n^{k-1}|)^{2p}\right]+C(p)\frac{C_a^{2p}\EE[\zeta_1^{2p}]}{\sigma^{2p}}\EE\left[\left(1+|Y_n^{k-1}|\right)^{2p}\right]\right)\\
  & \leq \max\left(C_b^{2p},C(p)\frac{C_a^{2p}\EE[\zeta_1^{2p}]}{\sigma^{2p}}\right) 2^{2p} \frac{(j-i)^{p-1}}{n^{p}} \left\lbrace (j-i)+\sum_{k=i+1}^j\left(\EE\left[|Y_n^{k-1}|)^{2p}\right]\right)\right\rbrace\\
  & \leq \max\left(C_b^{2p},C(p)\frac{C_a^{2p}\EE[\zeta_1^{2p}]}{\sigma^{2p}}\right) 2^{2p} \exp\left(\sum_{k=i+1}^{j}2^{2p} \frac{(j-i)^{p-1}}{n^{p}}\right) (t_j-t_i)^p\\& \leq  \max\left(C_b^{2p},C(p)\frac{C_a^{2p}\EE[\zeta_1^{2p}]}{\sigma^{2p}}\right) 2^{2p}  \exp\left( 2^{2p}\right) (t_j-t_i)^p:=\Ck(p) (t_j-t_i)^p,
\end{align*}
where we have used the discrete version of the BDG inequality~\cite[Theorem 6.3]{BS15} 
in the martingale term $\sum_{k=i+1}^j\frac{1}{\sigma\sqrt{n}}a\left(Y_n^{k-1}\right)\zeta_{k}$ with 
$C(p):=6^p(p-1)^{p-1}$. 
Indeed, for the discrete-time martingale process $(x^{i,j}_n)_u:=\sum_{k=1}^u\frac{1}{\sigma\sqrt{n}}a\left(Y_n^{k+i-1}\right)\zeta_{i+k}$ for $u\in\{1,...,j-i\}$, we have 
$|(x^{i,j}_n)_{j-i}|\leq \displaystyle\max_{u\in \{1,.., j-i\}} |(x^{i,j}_n)_u|$ and the BDG inequality clearly also applies to $|x^{i,j}_{j-i}|$. 
We also used independence of~$\zeta_k$ and~$Y_{k-1}$ and the linear growth of~$b(\cdot)$ 
and~$a(\cdot)$ from Assumption~\ref{assu:CoeffSDE}, 
H\"older inequality and the discrete version  of Gronwall's lemma~\cite{Clark87} in the last step. 
Since $\EE[\zeta_k^{2p}]$ is bounded by Assumption~\ref{assu:xi} and the constant $\Ck(p)$ only depends on~$p$, but not on~$n$, then the tightness criterion~\eqref{eq:IneqZ} of Theorem ~\ref{thm:HolderConvergencePartition}  holds for $p>1$ with $\gamma=2p$ and $\beta=p-1$.
\end{proof}

\begin{corollary}\label{cor:Y_and_B}
Let $(Y_n)_{n\geq 1}$ be defined as in Theorem \ref{thm: Donsker diffusion} with innovations $(\zeta_i)_{i\geq 1}$, and suppose $(B^n)_{n\geq 1}$ is defined by the Donsker approximations \eqref{eq:Donsker}, for some innovations  $(\xi_i)_{i\geq 1}$ satisfying Assumption \ref{assu:xi} such that $((\zeta_i,\xi_i))_{i\geq 1}$ is iid~with $\mathrm{corr}(\zeta_i,\xi_i)=\rho$, for all $i\geq1$. Then there is joint weak convergence of $(B_n,Y_n)$ to $(B,Y)$ in $\left(\Cc^\lambda(\TT,\mathbb{R}^2),\|\cdot\|_\lambda\right)$, for all $\lambda<\frac{1}{2}$, for a standard Brownian motion $B$ such that $[B,W]_t=\rho t$, for $t\in \mathbb{I}$, where $W$ is the standard Brownian motion driving the dynamics of the weak limit $Y$ in \eqref{eq:diffusion}.
\end{corollary}
\begin{proof}
Take $(\zeta^\perp_i)_{i\geq 1}$ to satisfy Assumption \ref{assu:xi} and be independent of the innovations $(\zeta_i)_{i\geq 1}$ defining $(Y^n)_{n\geq1}$. Then set $\xi_i:=\rho \zeta_i + \sqrt{1-\rho^2} \zeta^\perp_i$, for $i\geq 1$, and let $B_n$ be defined in terms of $(\xi_i)_{i\geq 1}$. This yields the same finite-dimensional distributions of $(B_n,Y_n)$ as for the general $(\xi_i)_{i\geq 1}$ in the statement of the corollary. Consider now the drift vector $\textbf{b}(y)=(0,b(y))$ and the $2\times 2$ diffusion matrix $\textbf{a}(y)$ with rows $(\rho, \sqrt{1-\rho^2})$ and $(0,a(y))$. Then  Kushner \cite{Kushner74} applies directly to give finite-dimensional convergence with the desired limit. Finally, tightness of $(B_n,Y_n)$ follows analogously to the proof of Theorem \ref{thm: Donsker diffusion}. Hence the claim follows.
\end{proof}

\subsection{Invariance principle for rough processes}\label{sect:invariance_principles}
We have set the ground to extend our results to processes that are not necessarily $(1/2-\eps)$-H\"older continuous, Markovian nor semimartingales. 
More precisely, we are interested in $\alpha$-H\"older continuous paths with $\alpha\in(0,1)$, 
such as Riemann-Liouville fractional Brownian motion or some $\TBSS$ processes.
A key tool is the Continuous Mapping Theorem, first proved by Mann and Wald~\cite{MW43}, 
which establishes the preservation of weak convergence under continuous operators.

\begin{theorem}[Continuous Mapping Theorem]\label{thm: continuous mapping}
Let $(\Xx,\|\cdot\|_{\Xx})$ and $(\Yy,\|\cdot\|_{\Yy})$ be two normed spaces and assume that 
$g:\Xx\to \Yy$ is a continuous operator.
If the sequence of random variables $(Z_n)_{n\geq 1}$ converges weakly to~$Z$ in $(\Xx,\|\cdot\|_{\Xx})$,
then $(g(Z_n))_{n\geq 1}$ also converges weakly to~$g(Z)$ in $(\Yy,\|\cdot\|_{\Yy})$.
\end{theorem}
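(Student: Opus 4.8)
The plan is to reduce the statement to the very definition of weak convergence in terms of bounded continuous test functionals. Recall that $Z_n \Rightarrow Z$ in $(\Xx,\|\cdot\|_{\Xx})$ means precisely that $\EE[\varphi(Z_n)] \to \EE[\varphi(Z)]$ as $n\to\infty$ for every bounded continuous functional $\varphi:\Xx\to\RR$. First I would fix an arbitrary bounded continuous $\psi:\Yy\to\RR$ and observe that the composition $\psi\circ g:\Xx\to\RR$ is again bounded (because $\psi$ is) and continuous (because both $\psi$ and $g$ are), hence an admissible test functional on~$\Xx$. Applying the hypothesis with $\varphi:=\psi\circ g$ then yields $\EE[\psi(g(Z_n))]=\EE[(\psi\circ g)(Z_n)]\to\EE[(\psi\circ g)(Z)]=\EE[\psi(g(Z))]$; since $\psi$ was arbitrary, this is exactly the assertion that $g(Z_n)\Rightarrow g(Z)$ in $(\Yy,\|\cdot\|_{\Yy})$.

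Two points need to be checked before the chain of equalities above is meaningful. The first is measurability: since $g$ is continuous it is Borel measurable, so $g(Z_n)$ and $g(Z)$ are bona fide $\Yy$-valued random variables and the expectations are well defined; likewise $\psi\circ g$ is Borel measurable on~$\Xx$. The second is the topological setting --- the H\"older norms $\|\cdot\|_\lambda$ generate non-separable spaces --- but this causes no difficulty here, because weak convergence is formulated through the family of bounded continuous functionals and not through an almost-sure coupling. (Had one wished to argue via the Skorokhod representation theorem, constructing $\widetilde Z_n\to\widetilde Z$ almost surely on an auxiliary probability space, applying continuity of~$g$ pathwise, and concluding by bounded convergence, then separability would be required; the functional-analytic route above sidesteps this entirely, which is why it is the natural one given the H\"older framework of the paper.)

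The argument has essentially no analytic content: the only substantive observation is that composition preserves both continuity and boundedness, and everything else is the definition of weak convergence. Consequently there is no real obstacle here --- the mild bookkeeping around measurability and the choice of $\sigma$-algebra on the H\"older spaces is the only thing worth spelling out, and this is classical (see Mann--Wald~\cite{Mann-Wald} and Billingsley~\cite{Billingsley}).
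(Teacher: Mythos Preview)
Your argument is correct and is in fact the standard proof of the Continuous Mapping Theorem. Note, however, that the paper does not supply its own proof of this statement: Theorem~\ref{thm: continuous mapping} is merely quoted as a classical tool, with attribution to Mann and Wald~\cite{Mann-Wald}, and is then invoked repeatedly (in Theorem~\ref{thm: GFO weak convergence}, Proposition~\ref{prop: weak conv family Holder continuous}, Theorem~\ref{thm: weak conv GFO Skorokhod }, and Section~\ref{sec:ProofMainThm}) without further justification. So there is no ``paper's proof'' to compare against; you have simply filled in the standard argument that the authors take for granted. Your remark about sidestepping separability via the bounded-continuous-functional definition is apposite given the H\"older setting, and aligns with the paper's own care in passing through the separable subspace~$C_0^\lambda(\TT)$ elsewhere.
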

Many authors have exploited the combination of Theorems~\ref{thm: Donsker} 
and~\ref{thm: continuous mapping} to prove weak convergence~\cite[Chapter IV]{Pollard84}. 
This path avoids the lengthy computations of tightness and finite-dimensional convergence
in classical proofs~\cite{Billingsley68}. 
In fact, Hamadouche~\cite{Hamadouche00} already realised that Riemann-Liouville fractional operators 
are continuous, hence Theorem~\ref{thm: continuous mapping} 
holds under mapping by H\"older continuous functions. 
In contrast,  the novelty here is to consider the family of GFO applied to Brownian motion
together with the extension of Brownian motion to It\^o diffusions.
In fact, minimal changes to the proof of Proposition~\ref{prop: mapping BM} yield the following:
\begin{corollary}\label{cor:GalphaY}
If~$Y$ solves~\eqref{eq:diffusion}, then 
$\displaystyle (\Gg^\alpha Y)(t)=\int_0^t g(t-s)\D Y_s$
almost surely for all $t\in\TT$ and $\alpha\in\mathfrak{R}^{\frac{1}{2}}$.
\end{corollary}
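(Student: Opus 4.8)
The plan is to reduce the statement to Proposition~\ref{prop: mapping BM} by exploiting the semimartingale structure of~$Y$. Since~$Y$ is the strong solution of~\eqref{eq:diffusion}, it decomposes as $Y_t = Y_0 + A_t + M_t$, where $A_t := \int_0^t b(Y_s)\,\D s$ has (locally) bounded variation and $M_t := \int_0^t \sit(Y_s)\,\D W_s$ is a continuous local martingale; under Assumption~\ref{assu:CoeffSDE} the coefficient~$\sit$ has at most linear growth and $Y_t\in L^p$ for every $p\geq 1$ and every $t\in\TT$, so that $\sup_{s\in\TT}\EE[\sit(Y_s)^2]<\infty$ and~$M$ is in fact a square-integrable martingale. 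Both the operator~$\Gg^\alpha$ (linear on $\Cc^\lambda(\TT)$, with $\lambda<\tfrac12$ chosen so that $\alpha\in\Rkl$ and $Y\in\Cc^\lambda(\TT)$ almost surely) and the integral $\int_0^\cdot g(\cdot-s)\,\D Y_s = \int_0^\cdot g(\cdot-s)b(Y_s)\,\D s + \int_0^\cdot g(\cdot-s)\sit(Y_s)\,\D W_s$ split along this decomposition, so it suffices to prove the claimed identity separately for~$A$ and for~$M$.

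For the bounded-variation part, $\D A_s = b(Y_s)\,\D s$ with $s\mapsto b(Y_s)$ continuous and hence pathwise bounded on~$\TT$, and the identity $(\Gg^\alpha A)(t) = \int_0^t g(t-s)b(Y_s)\,\D s$ becomes a purely deterministic statement: for $\alpha\in[0,1-\lambda)$ it is an integration-by-parts/Fubini manipulation of $\int_0^t A(s)\frac{\D}{\D t}g(t-s)\,\D s$, while for $\alpha\in(-\lambda,0)$ it follows by differentiating $\int_0^t A(s)g(t-s)\,\D s$ under the integral sign. This is exactly the computation in the proof of Proposition~\ref{prop: mapping BM} with~$W$ replaced by the absolutely continuous path~$A$, which only makes things easier. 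Writing $g(u)=u^\alpha L(u)$ with $L\in\Cc_b^1(\TT)$ and recalling $\alpha>-\tfrac12$, all the integrals involved are absolutely convergent, so the interchanges are legitimate.

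For the martingale part, I would run verbatim the argument of Proposition~\ref{prop: mapping BM} with the integrator $\D M_s=\sit(Y_s)\,\D W_s$ in place of $\D W_s$. In the case $\alpha\in[0,1-\lambda)$ one substitutes $g(t-s)=g(0)+\int_s^t g'(r-s)\,\D r$ into $\int_0^t g(t-s)\,\D M_s$ and applies the stochastic Fubini theorem to exchange the $\D r$ and $\D M_s$ integrations, recovering $\int_0^t M_s\frac{\D}{\D t}g(t-s)\,\D s=(\Gg^\alpha M)(t)$; the case $\alpha\in(-\lambda,0)$ is treated by the same exchange performed after differentiation. The hypotheses of the stochastic Fubini theorem hold because $\int_0^t\!\!\int_0^t |g'(r-s)|^2\,\EE[\sit(Y_s)^2]\,\D s\,\D r<\infty$, since $g'$ has a square-integrable singularity at the origin for $\alpha>-\tfrac12$ and $\EE[\sit(Y_s)^2]$ is bounded on~$\TT$; this is the one place where the growth control from Assumption~\ref{assu:CoeffSDE} is genuinely used. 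I expect this integrability verification --- together with making the Fubini exchange rigorous in the regime $\alpha<0$, where one must also justify pulling the time derivative through the stochastic integral (e.g.\ by an $L^2$ approximation argument or a monotone-class argument on simple integrands) --- to be the only real obstacle; everything else is bookkeeping inherited from Proposition~\ref{prop: mapping BM}.

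Finally, the preceding steps yield, for each fixed $t\in\TT$, an almost-sure identity. To upgrade this to a single null set valid simultaneously for all $t$, I would use continuity in~$t$ of both sides: the left-hand side is continuous by Proposition~\ref{prop:GContinuous} (which gives $\Gg^\alpha Y\in\Cc^{\lambda+\alpha}(\TT)$ almost surely), and the right-hand side admits a continuous modification, for instance via a Kolmogorov-\u{C}entsov estimate on the increments of the stochastic convolution or directly from the Fubini representation. Equality on the countable dense set $\QQ\cap\TT$ then extends to all of~$\TT$ by continuity, completing the proof.
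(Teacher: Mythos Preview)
Your overall strategy---reduce to Proposition~\ref{prop: mapping BM} and replay its argument with the semimartingale~$Y$ (or its martingale part~$M$) in place of~$W$---is exactly what the paper does; the corollary is stated with the remark that ``minimal changes to the proof of Proposition~\ref{prop: mapping BM}'' suffice. The semimartingale decomposition $Y=Y_0+A+M$ is a sensible way to organise this, and the bounded-variation part poses no difficulty.

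The concrete mechanism you propose for the martingale part, however, is not the one used in Proposition~\ref{prop: mapping BM}, and the substitute you give breaks down on the relevant parameter range. You invoke stochastic Fubini with the integrand $g'(r-s)$ and claim that ``$g'$ has a square-integrable singularity at the origin for $\alpha>-\tfrac12$''. This is false: since $g(u)=u^{\alpha}L(u)$ with $L\in\Cc_b^1(\TT)$, one has $g'(u)\sim \alpha u^{\alpha-1}L(0)$ near the origin, so $|g'(u)|^2\sim u^{2\alpha-2}$, which is integrable only for $\alpha>\tfrac12$. For every $\alpha\in(-\tfrac12,\tfrac12]$---precisely the rough regime of interest---the double integral $\int_0^t\int_0^r|g'(r-s)|^2\,\EE[\sit(Y_s)^2]\,\D s\,\D r$ diverges, the inner stochastic integral $\int_0^r g'(r-s)\,\D M_s$ is not even defined, and the Fubini exchange cannot be carried out. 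This is a genuine gap, not a verification issue.

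The proof of Proposition~\ref{prop: mapping BM} avoids this entirely. For $\alpha\in\Rkl_+$ it uses It\^o's product rule (integration by parts for a semimartingale against the bounded-variation function $s\mapsto g(t-s)$), which only requires $g'\in L^1$ near the origin---and this does hold for all $\alpha>0$. For $\alpha\in\Rkl_-$ it introduces the truncated operator $\Gg^{1+\alpha}_{\eps}$, applies integration by parts on $[0,t-\eps]$ where $g$ is smooth, and passes to the limit $\eps\downarrow 0$ using uniform convergence of $\int_0^{t-\eps}g(t-s)\,\D W_s$; replacing $\D W_s$ by $\D Y_s$ (or $\D M_s$) here is straightforward since $g(t-\cdot)\ind_{[0,t-\eps]}\to g(t-\cdot)$ in $L^2$ and $\langle M\rangle$ has bounded density. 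If you follow that route rather than Fubini, the ``only real obstacle'' you anticipate disappears.
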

The analogue of Theorem~\ref{thm: Donsker diffusion} for~$\Gg^\alpha Y$ holds as follows: 
\begin{theorem}[Generalised rough Donsker]\label{thm: GFO weak convergence}
For $(Y_n)$ in~\eqref{eq:Donskerdiffusion},
$Y$ its weak limit in $\left(\Cc^{\lambda}(\TT),\|\cdot\|_{\lambda}\right)$ for $\lambda<\frac{1}{2}$,
then the representation
\begin{equation}\label{eq:brute-force rDonsker}
\left(\Gg^\alpha Y_n\right)(t) = 
\displaystyle  \sum_{i=1}^{\nt}n\bigl[G(t-t_{i-1})-G(t-t_{i})\bigr]
 \left(Y_n^{i}-Y_n^{i-1}\right)
  + n \,G(t-t_{\nt})\bigl(Y_n(t)-Y_n^{\nt}\bigr),\quad  t\in\TT,
\end{equation}
holds.
Furthermore 
this sequence $\left(\Gg^\alpha Y_n\right)_{n\geq 1}$
converges weakly to~$\Gg^\alpha Y$ 
in $\left(\Cc^{\alpha+\lambda}(\TT),\|\cdot\|_{\alpha+\lambda}\right)$ for any $\alpha\in\Rkl$.
\end{theorem}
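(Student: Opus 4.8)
The plan is to deduce the statement from the Continuous Mapping Theorem (Theorem~\ref{thm: continuous mapping}), as foreshadowed in the discussion above, so that all the analytic effort is concentrated in two places: identifying the explicit formula~\eqref{eq:brute-force rDonsker} with $\Gg^\alpha$ evaluated at an honest element of $\Cc^\lambda(\TT)$, and handling the mild topological subtleties of the H\"older spaces involved. The three ingredients are already available: Theorem~\ref{thm: Donsker diffusion} gives $Y_n\Rightarrow Y$ in $\Cc^\lambda(\TT)$ for the fixed $\lambda<\tfrac12$ (indeed in the separable subspace $C_0^\lambda(\TT)$); Proposition~\ref{prop:GContinuous} gives that $\Gg^\alpha\colon\Cc^\lambda(\TT)\to\Cc^{\alpha+\lambda}(\TT)$ is continuous for every $\alpha\in\Rkl$, which matches the hypotheses exactly since $\Rkl$ is precisely the set of $\alpha$ with $\alpha+\lambda\in(0,1)$, so the target is a genuine H\"older space; and the corollary extending Proposition~\ref{prop: mapping BM} to~$Y$ identifies the limit $\Gg^\alpha Y$ with the stochastic integral $\int_0^{\cdot}g(\cdot-s)\,\D Y_s$ appearing in~\eqref{eq:System}. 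Granting the identification below, feeding $Y_n\Rightarrow Y$ through Theorem~\ref{thm: continuous mapping} yields $\Gg^\alpha Y_n\Rightarrow\Gg^\alpha Y$ in $\Cc^{\alpha+\lambda}(\TT)$, and finite-dimensional convergence and tightness are inherited for free from Theorem~\ref{thm: Donsker diffusion} through the continuous map --- this is the economy that the continuous-mapping route buys us.

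\textbf{Identifying~\eqref{eq:brute-force rDonsker}.}
The process $Y_n$ of~\eqref{eq:Donskerdiffusion} is piecewise affine on the mesh $\Tt_n$, with $Y_n'(s)=n\,(Y_n^{i}-Y_n^{i-1})$ on each cell $(t_{i-1},t_i)$, so I would plug it into Definition~\ref{def: GFO}. For $\alpha\in(-\lambda,0)$ one differentiates $t\mapsto\int_0^t Y_n(s)g(t-s)\,\D s=\int_0^t Y_n(t-u)g(u)\,\D u$ under the integral sign, which is legitimate because $g(u)=u^\alpha L(u)$ is integrable near $0$ (as $\alpha>-1$) and $Y_n$ is Lipschitz; for $\alpha\in[0,1-\lambda)$ one integrates by parts, the boundary terms vanishing thanks to $g(0)=0$ (for $\alpha>0$; the case $\alpha=0$ is analogous, carrying an extra deterministic $Y_n(t)L(0)$ term on both sides) and, after the harmless deterministic reduction $Y_0=0$ already used in Theorem~\ref{thm: Donsker diffusion}, $Y_n(0)=0$. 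In both cases this gives $(\Gg^\alpha Y_n)(t)=\int_0^t g(t-s)\,\D Y_n(s)=\int_0^t g(t-s)\,Y_n'(s)\,\D s$, and splitting the integral cell by cell produces exactly the second (off-mesh) branch of~\eqref{eq:brute-force rDonsker}, partial-cell term $nG(t-t_{\nt})(Y_n(t)-Y_n^{\nt})$ included; on $\Tt_n$ the partial cell is empty and one is left with the grid value $\sum_{i=1}^{\nt}n[G(t-t_{i-1})-G(t-t_i)](Y_n^{i}-Y_n^{i-1})$. To reconcile this with the simpler Riemann--Stieltjes form $\sum_{i=1}^{\nt}g(t-t_{i-1})(Y_n^{i}-Y_n^{i-1})$ displayed on $\Tt_n$, note that the difference of the two grid functionals equals $\sum_{i}\big(g(t-t_{i-1})-n\int_{t-t_i}^{t-t_{i-1}}g(u)\,\D u\big)(Y_n^{i}-Y_n^{i-1})$; I would bound $\big|g(u)-n\int_{u-1/n}^{u}g(v)\,\D v\big|$ by Taylor/H\"older estimates on $g$ on compact subsets of $(0,1]$ together with the explicit singularity $g(u)=u^\alpha L(u)$ near $u=0$, and combine this with $\sup_i|Y_n^{i}-Y_n^{i-1}|=o_\PP(1)$ (read off the moment bounds in the proof of Theorem~\ref{thm: Donsker diffusion}) to show the difference tends to $0$ in probability in $\Cc^{\alpha+\lambda}(\TT)$; Slutsky's theorem then carries the weak limit from the $G$-form to the $g$-form.

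\textbf{Main obstacle and loose ends.}
The genuinely delicate point is the identification in the rough regime $\alpha<0$, where $\Gg^\alpha$ is a derivative of a singular convolution: one cannot differentiate naively and must either justify the interchange by hand (using $\alpha>-1$) or route it through the Riemann--Liouville mapping results of Appendix~\ref{sec:Riemann-Riouville operators} and Proposition~\ref{prop:GContinuous}; controlling the $g$-form versus $G$-form discrepancy \emph{near the diagonal}, where $g$ blows up, is the quantitative heart of the argument. A second, softer point: $\Cc^{\alpha+\lambda}(\TT)$ is non-separable, so --- exactly as in the discussion following Theorem~\ref{thm: weak convergence Holder} --- I would first prove convergence in the separable closed subspace $C_0^{\alpha+\lambda}(\TT)$, which requires knowing that $\Gg^\alpha$ maps $C_0^{\lambda}(\TT)$ into $C_0^{\alpha+\lambda}(\TT)$ (i.e.\ preserves the little-H\"older vanishing-oscillation property); this follows from Proposition~\ref{prop:GContinuous} together with the density of $\Cc^1(\TT)$, and weak convergence in $C_0^{\alpha+\lambda}(\TT)$ then passes to $\Cc^{\alpha+\lambda}(\TT)$ since the inclusion is continuous. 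Nothing else is needed: tightness and finite-dimensional limits are not re-derived but transported through the continuous map $\Gg^\alpha$.
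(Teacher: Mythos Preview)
Your approach is essentially the paper's: weak convergence is obtained by feeding Theorem~\ref{thm: Donsker diffusion} through the Continuous Mapping Theorem via the continuity of $\Gg^\alpha$ from Proposition~\ref{prop:GContinuous}, and the bulk of the written proof is the identification of~\eqref{eq:brute-force rDonsker} with $\Gg^\alpha$ applied to the piecewise-linear interpolant, done by integration by parts on each cell (exactly your computation, splitting into $\alpha\in\Rkl_+$ and $\alpha\in\Rkl_-$, using $G(0)=g(0)=0$). The paper's proof is three lines of calculus and does not revisit tightness, finite-dimensional convergence, or separability --- all of that is, as you say, transported through the continuous map.

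Where you diverge is in the amount of work you propose for the on-grid $g$-form. The paper simply declares the case $t\in\Tt_n$ ``trivial to compute applying directly Definition~\ref{def: GFO}'' and moves on; it does \emph{not} attempt to reconcile the left-point Riemann--Stieltjes sum $\sum g(t-t_{i-1})(Y_n^i-Y_n^{i-1})$ with the $G$-form $\sum n[G(t-t_{i-1})-G(t-t_i)](Y_n^i-Y_n^{i-1})$, nor does it run a Slutsky argument. In other words, the object that the paper actually proves to converge via CMT is the $G$-form (which \emph{is} $\Gg^\alpha Y_n$ on the interpolant for all $t$, the partial-cell term vanishing on the grid); the $g$-form displayed on $\Tt_n$ is presented as a computational convenience and its equivalence is not argued. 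Your observation that the two differ and that bridging them near the diagonal (where $g$ blows up when $\alpha<0$) requires a genuine estimate is correct and more scrupulous than the paper --- but for the purpose of matching the paper's proof you can drop that entire paragraph, as well as the little-H\"older/separability discussion, which the paper handles once after Theorem~\ref{thm: weak convergence Holder} and does not revisit here.
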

\begin{proof}
Recall that the sequence~\eqref{eq:Donskerdiffusion} is piecewise differentiable in time.
For $\alpha\in\Rkl_+$, note that $g(0)=0$ and therefore by integration by parts~\cite[Section 2.4]{Weinstock74} 
(where $Y_n$ is piecewise differentiable), for $n\geq 1$ and $t\in\TT$,
\begin{align*}
(\Gg^\alpha Y_n)(t)
 & = \int_0^t g'(t-s)(Y_n(s)-Y_n(0))\D s = \int_0^t g(t-s)\frac{\D (Y_n(s) - Y_n(0))}{\D s}\D s\\
 & = \frac{1}{\sigma\sqrt{n}}\left[\sum_{i=1}^{\nt}n\int_{t_{i-1}}^{t_{i}} g(t-s) a\left(Y_n^{i-1}\right)\zeta_{i}\D s
  + n\int_{t_{\nt}}^{t} g(t-s)a\left(Y_n^{\nt}\right) \zeta_{\nt+1}\D s\right]\\
 &+\frac{1}{n}\left[n\sum_{i=1}^{\nt}\int_{t_{i-1}}^{t_{i}} g(t-s) b\left(Y_n^{i-1}\right)\D s
  + n\int_{t_{\nt}}^{t} g(t-s)b\left(Y_n^{\nt}\right) \D s \right]  \\
  &=\sum_{i=1}^{\nt}n\left[G(t-t_{i-1}) - G(t-t_{i})\right] \left(Y_n^{i} - Y_n^{i-1}\right)
  + n (G(t-t_{\nt}) -  G(0))\bigl(Y_n(t)-Y_n^{\nt}\bigr),
\end{align*}
and~\eqref{eq:brute-force rDonsker} follows since $G(0)=0$ in the last line.
When $\alpha\in\Rkl_-$, using $G(0)=0$, we similarly get
\begin{align*}
 \int_{0}^{t} & g(t-s) (Y_n(s) - Y_n(0))\D s
 = \int_0^t G(t-s)\frac{\D (Y_n(s) - Y_n(0))}{\D s}\D s \\
   & 
   = \frac{1}{\sigma\sqrt{n}}
   \left[\sum_{i=1}^{\nt}n\int_{t_{i-1}}^{t_{i}} G(t-s) a\left(Y_n^{i-1}\right)\zeta_{i}\D s
  + n\int_{t_{\nt}}^{t} G(t-s)a\left(Y_n^{\nt}\right) \zeta_{\nt+1}\D s\right]\\
 &+\frac{1}{n}
 \left[n\sum_{i=1}^{\nt}\int_{t_{i-1}}^{t_{i}} G(t-s) b\left(Y_n^{i-1}\right)\D s
  + n\int_{t_{\nt}}^{t} G(t-s)b\left(Y_n^{\nt}\right) \D s \right] \\
  &=
  n\left\{
  \sum_{i=1}^{\nt}
  \left[\frac{b\left(Y_n^{i-1}\right)}{n} + \frac{a\left(Y_n^{i-1}\right)}{\sigma\sqrt{n}}\zeta_{i}\right]
  \int_{t_{i-1}}^{t_i}G(t-s)\D s
   + 
   \left[\frac{b\left(Y_n^{\nt}\right)}{n}
   + \frac{a\left(Y_n^{\nt}\right)}{\sigma\sqrt{n}}\zeta_{\nt+1}\right]
   \int_{t_{\nt}}^{t} G(t-s) \D s
   \right\}
  \\
  &=
  n\left\{
  \sum_{i=1}^{\nt}
  \left(Y_n^{i} - Y_n^{i-1}\right)
  \int_{t_{i-1}}^{t_i}G(t-s)\D s
   + 
   \left(Y_n(t)-Y_n^{\nt}\right)
   \int_{t_{\nt}}^{t} G(t-s) \D s
   \right\},
\end{align*}
and from there it follows readily that
\begin{align*}
(\Gg^\alpha Y_n)(t)&= \frac{\D}{\D t} \int_{0}^{t} g(t-s) (Y_n(s) - Y_n(0))\D s\\
   & =\sum_{i=1}^{\nt}n\left[G(t-t_{i-1})-G(t-t_{i})\right]  \left(Y_n^{i} - Y_n^{i-1}\right)
  + n G(t-t_{\nt})\bigl(Y_n(t)-Y_n^{\nt}\bigr),
\end{align*}
as desired (when $t=\frac{k}{n}$ the difference quotients pick up an extra term, but this vanishes in the limit).
Finally, the claimed convergence
follows analogously to that in Theorem~\ref{thm: Donsker diffusion} by continuous mapping, along with the fact that
$\Gg^\alpha$ is a continuous operator from $\left(\Cc^{\lambda}(\TT),\|\cdot\|_{\lambda}\right)$ 
to $\left(\Cc^{\lambda+\alpha}(\TT),\|\cdot\|_{\lambda+\alpha}\right)$ for all $\lambda\in(0,1)$ 
 and $\alpha\in\Rkl$.
\end{proof}
Notice here that the mean value theorem implies
\begin{equation}\label{eq:left-point rDonsker}
\left(\Gg^\alpha Y_n\right)(t)
 = \sum_{i=1}^{\nt}g\bigl(t_i^*\bigr) \bigl(Y_n^{i}-Y_n^{i-1}\bigr)
  + g\bigl(t^*_{\nt+1}\bigr)\bigl(Y_n(t)-Y_n^{\nt}\bigr),
\end{equation}
where $t^*_i\in[t-t_{i},t-t_{i-1}]$ and $t^*_{\nt+1}\in[0,t-t_{\nt}]$ and we use that $G(0)=0$. 
This expression is closer to the usual left-point forward Euler approximation. 
For numerical purposes,~\eqref{eq:left-point rDonsker} is much more efficient, since the integral~$G$ required in~\eqref{eq:brute-force rDonsker} is not necessarily available in closed form. 
Nevertheless, not any arbitrary choice of~$t^*_i$ gives the desired convergence from the above argument. We shall present a suitable candidate for optimal~$t_i^*$ in Section~\ref{sec:MomentMatching}, which guarantees weak convergence in H\"older sense. 

As could be expected, the Hurst parameter influences the speed of convergence of the scheme. 
We leave a formal proof to further study, but the following argument provides some intuition
about the correct normalising factor: 
Given $g \in \Ll^\alpha$, we can write $g(u)=u^{\alpha}L(u)$, where $L$ is a bounded function on $\mathbb{I}$. At time $t=t_i$, take $t^*_k=t_i-t_k+\frac{\varepsilon}{n}$ for $\varepsilon\in[0,1]$. For $\alpha\in\Rkl_-$, since $g \in \Ll^\alpha$, we can rewrite the approximation~\eqref{eq:left-point rDonsker} as
$$
\left(\Gg^\alpha Y_n\right)(t_i)
 = \frac{1}{n^{1/2+\alpha}}\sum_{k=1}^{i}\left(i - k + \varepsilon \right)^\alpha L(t_k^*)\left(Y^k_n-Y_n^{k-1}
 \right)\sqrt{n},\quad \text{for }i=0,\ldots,n.
$$
Here, $(i-k+\varepsilon)^\alpha\leq \varepsilon^\alpha$ is bounded in $n\geq1$ as long as $\varepsilon\in (0,1]$, so the normalisation factor is of order $n^{-\alpha-1/2}$.
When $\alpha\in\Rkl_+$, the approximation~\eqref{eq:left-point rDonsker} instead reads as
$$
\left(\Gg^\alpha Y_n\right)(t_i)
 = \frac{1}{\sqrt{n}}\sum_{k=1}^{i}\bigl(t_i-t_k+\tfrac{\varepsilon}{n}\bigr)^\alpha L(t^*_k)\left(Y^k_n-Y_n^{k-1}\right)\sqrt{n},\quad \text{for }i=0,\ldots,n,
$$
in which case $(t_i-t_k+\tfrac{\varepsilon}{n})^\alpha\leq t_i^\alpha$ is bounded in $n\geq 1$, 
and hence the normalisation factor is of order~$n^{-1/2}$.
This intuition is consistent with the result by Neuenkirch and Shalaiko~\cite{NS16}, 
who found the strong rate of convergence of the Euler scheme to be of order $\mathcal{O}(n^{-H})$ for $H<\frac{1}{2}$ for fractional Ornstein-Uhlenbeck. 
So far, our results hold for $\alpha$-H\"older continuous functions;
however, for practical purposes, it is often necessary
to constrain the volatility process~$(V_t)_{t\in\TT}$ to remain strictly positive at all times. 
The stochastic integral~$\Gg^\alpha Y$ need not be so in general.
However, a simple transformation (e.g. exponential) can easily overcome this fact. 
The remaining question is whether the $\alpha$-H\"older continuity 
is preserved after this composition. 
\begin{proposition}\label{prop: weak conv family Holder continuous}
Let $(Y_n)_{n\geq 1}$ be the approximating sequence~\eqref{eq:Donskerdiffusion} in~$\Cc^{\lambda}(\TT)$
for $\lambda<1/2$.
Then $\left(\Phi\left(\Gg^\alpha Y_n\right)\right)$ converges weakly to~$\Phi\left(\Gg^\alpha Y\right)$ 
in $\left(\Cc^{\alpha+\lambda}(\TT),\|\cdot\|_{\alpha+\lambda}\right)$ for all $\alpha\in\Rkl$.
\end{proposition}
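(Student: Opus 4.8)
\medskip
\noindent\emph{Proof strategy.}
The plan is to obtain this proposition as a direct application of the Continuous Mapping Theorem (Theorem~\ref{thm: continuous mapping}) to the weak convergence already established in Theorem~\ref{thm: GFO weak convergence}, namely that $\left(\Gg^\alpha Y_n\right)$ converges weakly to $\Gg^\alpha Y$ in $\left(\Cc^{\alpha+\lambda}(\TT),\|\cdot\|_{\alpha+\lambda}\right)$. The only thing left to verify is that $\Phi$ is a continuous self-map of $\Cc^{\alpha+\lambda}(\TT)$; more precisely, following the Ra\v{c}kauskas--Suquet reduction used throughout Section~\ref{sec:AllConvs}, that $\Phi$ restricts to a continuous map of the separable subspace $\Cc_0^{\alpha+\lambda}(\TT)$ into itself, since it is there that the weak convergence of $\left(\Gg^\alpha Y_n\right)$ takes place unambiguously.

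First I would establish the H\"older continuity of $\Phi$. In all the models of interest (Examples~\ref{ex:1}--\ref{ex:3}) the functional $\Phi$ is a time-inhomogeneous composition operator, $\Phi(\varphi)(t) = \psi(t,\varphi(t))$, with $\psi$ smooth in its second argument and with derivatives bounded on sets $\TT\times K$, $K\subset\RR$ compact, while $\psi(\cdot,x)$ is itself $(\alpha+\lambda)$-H\"older uniformly in~$x$ on compacts (in the examples $\psi(\cdot,x)$ is Lipschitz in~$t$ when $\alpha\in\Rkl_+$ and at worst $(2\alpha+1)$-H\"older when $\alpha\in\Rkl_-$, which is smoother than $\alpha+\lambda$). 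For such $\psi$ and any $\beta\in(0,1]$: if $\varphi\in\Cc^\beta(\TT)$ with $\varphi(\TT)\subset K$, then
$$
|\psi(t,\varphi(t))-\psi(s,\varphi(s))| \leq C_K\bigl(|t-s|^{\beta}+|\varphi(t)-\varphi(s)|\bigr) \leq C_K\bigl(1+|\varphi|_\beta\bigr)|t-s|^{\beta},
$$
using $|t-s|\leq|t-s|^\beta$ on~$\TT$; hence $\Phi(\varphi)\in\Cc^\beta(\TT)$, and the little-H\"older oscillation condition is preserved by the Lipschitz factor, so $\Phi$ also maps $\Cc_0^\beta(\TT)$ into itself. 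For continuity, given $\varphi_n\to\varphi$ in $\Cc^\beta(\TT)$ the ranges $\varphi_n(\TT)$ eventually lie in a common compact~$K$; a first-order Taylor expansion of $\psi$ in the second variable writes $\Phi(\varphi_n)-\Phi(\varphi) = R_n\cdot(\varphi_n-\varphi)$ with $R_n(t):=\int_0^1\partial_x\psi\bigl(t,\varphi(t)+u(\varphi_n(t)-\varphi(t))\bigr)\D u$, and $(R_n)$ is bounded in $\Cc^\beta(\TT)$ uniformly in~$n$ thanks to boundedness of $\partial_x\psi$, $\partial_{xx}\psi$ and $\partial_t\partial_x\psi$ on $\TT\times K$. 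The Banach-algebra estimate $\|fg\|_\beta\leq C\|f\|_\beta\|g\|_\beta$ then gives $\|\Phi(\varphi_n)-\Phi(\varphi)\|_\beta\leq C\|\varphi_n-\varphi\|_\beta\to 0$. Taking $\beta=\alpha+\lambda$ yields the desired continuity of $\Phi$ on $\Cc_0^{\alpha+\lambda}(\TT)$.

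With this in hand the argument closes at once: by Theorem~\ref{thm: GFO weak convergence} the sequence $\left(\Gg^\alpha Y_n\right)$ converges weakly to $\Gg^\alpha Y$ in $\Cc_0^{\alpha+\lambda}(\TT)$, $\Phi$ is a continuous operator on that space, so Theorem~\ref{thm: continuous mapping} gives that $\left(\Phi(\Gg^\alpha Y_n)\right)$ converges weakly to $\Phi(\Gg^\alpha Y)$ in $\Cc_0^{\alpha+\lambda}(\TT)$, and composing with the continuous inclusion $\Cc_0^{\alpha+\lambda}(\TT)\hookrightarrow\Cc^{\alpha+\lambda}(\TT)$ gives the statement. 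I expect the only delicate point to be the continuity of $\Phi$ on a genuine H\"older space rather than on $\Cc^1(\TT)$: when $\alpha\in\Rkl_-$ one has $\alpha+\lambda<\tfrac12$, so $\Gg^\alpha Y$ is rough and the smoothness of $\Phi$ on $\Cc^1(\TT)$ cannot be invoked directly; the composition structure above is precisely what makes continuity (indeed local Lipschitz-ness) survive, while simultaneously keeping all processes inside the separable little-H\"older space where the Continuous Mapping Theorem applies without measurability subtleties.
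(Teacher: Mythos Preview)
Your proposal is correct and follows the same route as the paper: weak convergence of $\Gg^\alpha Y_n$ from Theorem~\ref{thm: GFO weak convergence}, then the Continuous Mapping Theorem once continuity of~$\Phi$ on the relevant H\"older space is established. The only difference is how that continuity is obtained: the paper simply invokes Dr\'abek's characterisation of Nemyckij (composition) operators---$f\circ(\cdot)$ is continuous on $\Cc^\beta(\TT)$ if and only if $f\in\Cc^1$---whereas you reprove this by hand via a first-order Taylor expansion and the Banach-algebra estimate, and in slightly greater generality (time-inhomogeneous $\psi(t,x)$ rather than $f(x)$, and with explicit attention to the separable little-H\"older subspace). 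Your direct argument is self-contained and covers the actual examples more transparently; the paper's citation is shorter but relies on an external reference and, strictly speaking, Dr\'abek's statement is for autonomous compositions, so your extension is not idle.
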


\begin{proof}
Theorem~\ref{thm: GFO weak convergence} gives that $ \Gg^\alpha Y_n$ converges weakly to $\Gg^\alpha Y$ in $\left(\Cc^{\lambda+\alpha}(\TT),\|\cdot\|_{\lambda+\alpha}\right)$. By our assumptions, $\Phi$ is continuous from $\left(\Cc^{\lambda+\alpha}(\TT),\|\cdot\|_{\lambda+\alpha}\right)$ to $\left(\Cc^{\lambda+\alpha}(\TT),\|\cdot\|_{\lambda+\alpha}\right)$. The proposition thus follows from the continuous mapping theorem. The diagram below summarises the steps with $\lambda<1/2$.
The double arrows show weak convergence, and we indicate next to them the topology in which it takes place.
\begin{center}
\begin{tikzpicture}
  \matrix (m) [matrix of math nodes, row sep=2em, column sep=3em, text height=1ex, text depth=0.25ex,nodes={anchor=center}]
  {
     \left(\Cc^{\lambda}(\TT),\|\cdot\|_{\lambda}\right) & \left(\Cc^{\alpha+\lambda}(\TT),\|\cdot\|_{\alpha+\lambda}\right) & \left(\Cc^{\alpha+\lambda}(\TT),\|\cdot\|_{\alpha+\lambda}\right)  \\
     \displaystyle Y_n & \displaystyle\Gg^{\alpha}(Y_n) & \Phi(\Gg^\alpha Y_n) \\
     \text{} & \text{} & \text{} \\ 
     Y & \Gg^\alpha Y&  \Phi(\Gg^\alpha Y)\\};
  \path[->,very thick,shorten <=10pt,shorten >=10pt]
    (m-1-1.north) edge[bend left =20] node [above] {$\Gg^{\alpha}$}  (m-1-2.north)
     (m-2-1) edge[bend left =0] node [above] {$\Gg^{\alpha}$}  (m-2-2)
     (m-4-1) edge[bend left =0] node [above] {$\Gg^{\alpha}$}  (m-4-2)
     (m-1-2.north) edge[bend left =20] node [above] {$\Phi$}  (m-1-3.north)
     (m-2-2) edge[bend left =0] node [above] {$\Phi$}  (m-2-3)
     (m-4-2) edge[bend left =0] node [above] {$\Phi$}  (m-4-3);
     
\path[->,very thick,shorten <=10pt,shorten >=10pt]
    (m-2-1) edge[double] node [left] {$\|\cdot\|_{\lambda}$}  (m-4-1)
    (m-2-2) edge[double] node [left] {$\|\cdot\|_{\alpha+\lambda}$}  (m-4-2)
     (m-2-3) edge[double] node [left] {$\|\cdot\|_{\alpha+\lambda}$}  (m-4-3);
\end{tikzpicture}
\end{center}
\end{proof}


\subsection{Extending the weak convergence to the Skorokhod space and proof of Theorem~\ref{thm:MainThm}}\label{sec:ProofMainThm}
The Skorokhod space of c\`adl\`ag processes equipped with the Skorokhod topology 
has been widely used to prove weak convergence~\cite{Billingsley68,Ethier_Kurtz}. 
The Skorokhod space of c\`adl\`ag processes equipped with the Skorokhod norm,
which we denote $\left(\Dd(\TT),d_{\Dd}\right)$, 
markedly simplifies when we only consider continuous processes (as is the case of our framework with H\"older continuous processes). 
Billingsley~\cite[Chapter~3, Section 12]{Billingsley68} proved that the identity 
$\left(\Dd(\TT)\cap\Cc(\TT),d_{\Dd}\right)=\left(\Cc(\TT),\|\cdot\|_{\infty}\right)$
always holds.
This seemingly simple statement allows us to reduce proofs of weak convergence of continuous processes 
in the Skorokhod topology to that in the supremum norm, usually much simpler.
We start with the following straightforward observation:
\begin{lemma}
For any $\lambda\in(0,1)$, the identity map is continuous from $\left(\Cc^{\lambda}(\TT),\|\cdot\|_\lambda\right)$ 
to $\left(\Dd(\TT),d_{\Dd}\right)$.
\end{lemma}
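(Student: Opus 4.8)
The idea is to factor the identity map through the supremum norm, as
$$
\bigl(\Cc^{\lambda}(\TT),\|\cdot\|_{\lambda}\bigr)
 \;\hookrightarrow\; \bigl(\Cc(\TT),\|\cdot\|_{\infty}\bigr)
 \;\hookrightarrow\; \bigl(\Dd(\TT),\|\cdot\|_{\Dd}\bigr),
$$
and to check that each arrow is (Lipschitz) continuous. First I would note that the map is well defined: every $f\in\Cc^{\lambda}(\TT)$ satisfies $|f(t)-f(s)|\leq |f|_{\lambda}|t-s|^{\lambda}$, hence is uniformly continuous on~$\TT$, so that $\Cc^{\lambda}(\TT)\subseteq\Cc(\TT)\subseteq\Dd(\TT)$ and the identity indeed takes values in the Skorokhod space.

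Next, directly from the definition of the H\"older norm one has $\|h\|_{\infty}\leq |h|_{\lambda}+\|h\|_{\infty}=\|h\|_{\lambda}$ for every $h\in\Cc^{\lambda}(\TT)$; applying this to $h=f-g$ shows that the first inclusion above is $1$-Lipschitz from $\bigl(\Cc^{\lambda}(\TT),\|\cdot\|_{\lambda}\bigr)$ to $\bigl(\Cc(\TT),\|\cdot\|_{\infty}\bigr)$, hence continuous. For the second inclusion I would invoke the identity of Billingsley recalled just above, namely $\bigl(\Dd(\TT)\cap\Cc(\TT),\|\cdot\|_{\Dd}\bigr)=\bigl(\Cc(\TT),\|\cdot\|_{\infty}\bigr)$: on continuous paths the Skorokhod topology coincides with the topology of uniform convergence, so the inclusion $\bigl(\Cc(\TT),\|\cdot\|_{\infty}\bigr)\to\bigl(\Dd(\TT),\|\cdot\|_{\Dd}\bigr)$ is (isometric onto its image, in particular) continuous. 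Composing the two steps yields $\|f-g\|_{\Dd}=\|f-g\|_{\infty}\leq\|f-g\|_{\lambda}$ for all $f,g\in\Cc^{\lambda}(\TT)$, which is precisely the claimed continuity (indeed $1$-Lipschitz continuity) of the identity map.

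There is no real obstacle here; the only point requiring a moment's care is that $\|\cdot\|_{\Dd}$ is \emph{a priori} a Skorokhod metric rather than a genuine norm, but this is harmless because it is only ever evaluated on continuous paths, where the cited identity makes it agree with~$\|\cdot\|_{\infty}$. The whole argument is thus bookkeeping with the two norms together with the already-established identity $\bigl(\Dd(\TT)\cap\Cc(\TT),\|\cdot\|_{\Dd}\bigr)=\bigl(\Cc(\TT),\|\cdot\|_{\infty}\bigr)$.
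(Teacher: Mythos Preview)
Your proposal is correct and takes essentially the same approach as the paper: both arguments rest on the inequality $\|f\|_{\infty}\leq\|f\|_{\lambda}$ together with Billingsley's identification of the Skorokhod topology with the uniform topology on continuous paths. The paper phrases this as ``linear plus bounded implies continuous'', whereas you give the direct $1$-Lipschitz estimate on differences; your version is arguably more careful, since you explicitly address the point that $\|\cdot\|_{\Dd}$ is a metric rather than a norm, which the paper's linearity argument tacitly glosses over.
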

\begin{proof}
Since the identity map is linear, it suffices to check that it is bounded. 
For this observe that 
$\| f\|_{\lambda}=|f|_{\lambda}+\sup_{t\in\TT} |f(t)|=|f|_{\lambda}+\| f\|_{\infty}>\| f\|_{\infty}$,
where $|f|_{\lambda}>0$, which concludes the proof since the Skorokhod norm in the space of continuous functions is  equivalent to the supremum norm.
\end{proof}
Applying the Continuous Mapping Theorem twice, 
first with the Generalised fractional operator (Theorem~\ref{thm: GFO weak convergence}), 
then with the identity map, yields the following result directly:
\begin{theorem}\label{thm: weak conv GFO Skorokhod}
For any $\alpha \in \Rk^{1/2}$, the sequence $\left(\Phi(\Gg^\alpha Y_{n})\right)$ converges weakly to $\Phi\left(\Gg^\alpha Y\right)$ in $\left(\Dd(\TT),d_{\Dd}\right)$. Moreover, the sequence is tight in $\left(\Cc(\TT),\|\cdot\|_{\infty}\right)$.
\end{theorem}
The final step in the proof of our main theorem
is to extend the functional weak convergence to the log-stock price $X$. 
For this, we will rely on the weak convergence theory for stochastic integrals due to Jakubowski, Memin and Pag\`es~\cite{JMP89} and further developed by Kurtz and Protter~\cite{KP91}. Throughout, we write
$H \bullet N:=\int_0^\cdot H(s)\D N(s)$ and we use the notation $H^-$ for the process $H^-(t):=H(t-)$ obtained by taking left limits.
\begin{theorem}[Kurtz and Protter~\cite{KP91}]\label{thm:KurtzProtter}
For each $n\geq 1$, let $N_n=M_n+A_n$ be an $(\mathcal{F}^n_t)$-semimartingale and let
$H_n$ be an $(\mathcal{F}^n_t)$-adapted c\`adl\`ag process on~$\TT$. Suppose that, for all $\gamma>0$, there are $(\mathcal{F}^n_t)$-stopping times $(\tau_n^\gamma)$ such that $\sup_{n\geq 1}\mathbb{P}(\tau_n^\gamma \leq \gamma ) \leq 1/\gamma$ and $\sup_{n\geq1}\mathbb{E}[ [M_n]_{\tau_n^\gamma\land 1}+T_{ \tau_n^\gamma\land 1}(A_n)]<\infty$, where $T_t$ denotes the total variation on $[0,t]$. If $\left(H_n,N_n\right)$ converges weakly
to~$(H,N)$ in $(\Dd(\TT, \mathbb{R}^2),d_{\Dd})$, then $N$ is a semimartingale in the filtration generated by $(H,N)$ and 
$\left(H_n, N_n, H_{n}^-\bullet N_n\right)$ converges weakly to
$(H, N, H^- \bullet N)$ in $(\Dd(\TT, \mathbb{R}^3),d_{\Dd})$.
\end{theorem}

The above amounts to a restatement of~\cite[Theorem 2.2]{KP91} in the special case $\delta=\infty$ 
(in their notations) and restricted to real-valued processes on $\TT$. With this, we can now give the proof of Theorem~\ref{thm:MainThm}, which asserts the functional weak convergence of the approximations $X_n$ from~\eqref{eq:Donsker log stock} to the desired log-price~$X$ from~\eqref{eq:System}.

\begin{proof}[Proof of Theorem~\ref{thm:MainThm}]
We begin by considering, for all $n\geq 1$, the particular approximations
\[
M_n(t):= \frac{1}{\sigma\sqrt{n}}\sum_{i=1}^{\nt}\xi_i,
\]
$t\in \TT$, of the driving Brownian motion $B$ in the dynamics of $X$. Here the $\xi_i$ satisfy Assumption~\ref{assu:xi} and so does the $\zeta_i$ in the construction of~$Y_n$ from~\eqref{eq:Donskerdiffusion}. 
While each pair~$\xi_i$ and~$\zeta_i$ are correlated, they form an iid sequence $\{(\zeta_i,\xi_i)\}_{i\geq 1}$ across the pairs. In particular, it is straightforward to see that each $M_n$ is a martingale on $\TT$ for the filtration $(\mathcal{F}^n_t)$ defined by $\mathcal{F}^n_t:=\sigma(\zeta_i,\xi_i:i=1,\ldots,\nt)$. Moreover, we have
\[
\mathbb{E}\bigl[[M_n]_t\big] = \nt \frac{1}{\sigma^2n }\mathbb{E}[\xi_1^2] =\frac{\nt}{n} \leq 1,
\]
for $t\in \TT$, for all $n\geq1$. Consequently, we can simply take $\tau_n^\gamma:=+\infty$, for all $\gamma >0$ and $n\geq1$, to satisfy the required control on the integrators $N_n:=M_n$ in Theorem~\ref{thm:KurtzProtter}. By~\cite[Chapter~7, Theorem 1.4]{Ethier_Kurtz}, the $M_n$ converge weakly to a Brownian motion $B$ in $(\Dd(\TT),d_{\Dd})$. Now fix  $\alpha \in \Rk^{\frac{1}{2}}$ and define a sequence of c\`adl\`ag processes $H_n$ on $\TT$, for all $n\geq 1$, by setting $H_n(1):=\Phi(\Gg^\alpha Y_n)(1)$ and $H_n(t):=\Phi(\Gg^\alpha Y_{n})(t_{k-1})$ for $t\in[t_{k-1},t_k)$, for each $k=1,\ldots,n$. In view of Theorem~\ref{thm: weak conv GFO Skorokhod}, the Arzela--Ascoli characterisation of tightness~\cite[Theorem 8.2]{Billingsley68} for the space $\left(\Cc(\TT),\|\cdot\|_{\infty}\right)$ allows us to conclude that the $H_n$ converge weakly to $H:=\Phi(\Gg^\alpha Y)$ in $(\Dd(\TT),d_{\Dd})$. Furthermore, recalling the definition of $Y_n$ in~\eqref{eq:Donskerdiffusion}, each~$H_n$ is adapted to the filtration $(\mathcal{F}^n_t)$ introduced above. By Corollary~\ref{cor:Y_and_B}, we readily deduce that there is joint weak convergence of $(Y_n,H_n,M_n)$ to $(Y,H,B)$ on $(\Dd(\TT),d_\mathcal{D})\times (\Dd(\TT),d_\mathcal{D})\times (\Dd(\TT),d_\mathcal{D}) $, where $Y$ satisfies~\eqref{eq:diffusion} for a Brownian motion $W$ with $[ W, B ]_t = \rho t$, for all $t\in \mathbb{I}$. As noted in~\cite{KP91}, the Skorokhod topology on $\Dd(\TT,\mathbb{R}^2)$ is stronger than the product topology on $\Dd(\TT)\times\Dd(\TT)$, but here it automatically follows that we have weak convergence of the pairs $(H_n,M_n)$ to $(H,B)$ in $\left(\Dd(\TT, \mathbb{R}^2),d_{\Dd}\right)$, by standard properties of the Skorokhod topology (e.g.~\cite[Chapter~3, Theorem 10.2]{Ethier_Kurtz}), since the limiting pair $(H,B)$ is continuous. Consequently, we are in a position to apply Theorem \ref{thm:KurtzProtter}.
To this end, observe that
$$(H^-_n \bullet M_n)(t) = \sum_{k=1}^{\nt} \!H_n(t_{k}-)(M_n(t_{k})-M_n(t_{k}-)) =\frac{1}{\sigma \sqrt{n}}\sum_{k=1}^{\nt} \!\Phi(\Gg^\alpha Y_{n})(t_{k-1})\xi_k,
$$
which is precisely the second term on the right-hand side of~\eqref{eq:Donsker log stock}. Therefore, Theorem~\ref{thm:KurtzProtter} gives that the stochastic integral $H \bullet M = \sqrt{\Phi(\Gg^\alpha Y)}\bullet B$ is the weak limit in $(\Dd(\TT),d_{\Dd})$ of the second term on the right-hand side of~\eqref{eq:Donsker log stock}. For the first term on the right-hand side of~\eqref{eq:Donsker log stock}, we have $-\frac{1}{2}\int_0^\cdot H_n(s) \D s$ 
converging weakly to 
$-\frac{1}{2}\int_0^\cdot H(s)\D s$, 
by the continuous mapping theorem, as the integral is a continuous operator 
from $\left(\Dd(\TT),d_\Dd\right)$ to itself. Since there is weak convergence of the pairs $(H_n,H^-_n \bullet M_n)$ to $(H, H \bullet B)$ in $(\Dd(\TT,\mathbb{R}^2),d_{\Dd})$, the sum of the two terms on the right-hand side of~\eqref{eq:Donsker log stock} are then also weakly convergent in $\left(\Dd(\TT),d_\Dd\right)$. Recalling that the limit $Y$ satisfies~\eqref{eq:diffusion} for a Brownian motion $W$ such that $W$ and $B$ are correlated with parameter $\rho$, we hence conclude that $X$ converges in $\left(\Dd(\TT),d_\Dd\right)$ to the desired limit.
\end{proof}

\section{Applications}\label{sec:Applications}

\subsection{Weak convergence of the Hybrid scheme}
The Hybrid scheme (and its turbocharged version~\cite{MP18}) 
introduced by Bennedsen, Lunde and Pakkanen~\cite{BLP17} 
is the current state-of-the-art to simulate~$\TBSS$ processes. 
However, only convergence in mean-square-error was proved, but not (functional) weak convergence,
which would justify the use of the scheme for path-dependent options.
Unless otherwise stated, we shall denote by  $\Tt_n:=\{t_k = \frac{k}{n}\}_{k=0,...,n}$
the uniform grid on~$\TT$.
We show that the H\"older convergence also holds for the case $g(x)=x^\alpha$:
\begin{proposition}\label{prop: proof Hybrid scheme}
The Hybrid scheme sequence $(\widetilde{\Gg}^\alpha W_n)$ defined as 
\begin{equation}\label{eq:Hybrid}
\widetilde{\Gg}^\alpha W_n(t):=\widetilde{\Gg}^\alpha W_n\left(\frac{\nt}{n}\right) +(nt-\nt)\left(\widetilde{\Gg}^\alpha W_n\left(\frac{\nt+1}{n}\right) -\widetilde{\Gg}^\alpha W_n\left(\frac{\nt}{n}\right) \right),
\end{equation}
for $t\in\TT$, where
\begin{equation}\label{eq:Hybrid aux}
\widetilde{\Gg}^\alpha W_n(t_i)
 := \sum_{k=1}^{(i-\kappa)\vee 0}\int_{t_{k-1}}^{t_k}\sqrt{n} (t_i-s)^\alpha\D s\xi_{k} + \int_{0\vee t_{i-\kappa}}^{t_{i}}(t_{i}-s)^\alpha\D W_s,\quad i = 0,\ldots,n,\quad\kappa\geq 1.
\end{equation}
with $\xi_k := \int_{t_{k-1}}^{t_k}\D W_s\sim \mathcal{N}(0,1/n)$, converges to~$\Gg^\alpha W$ 
in $\left(\Cc^{\alpha+1/2},\|\cdot\|_{\alpha+1/2}\right)$ for $\alpha \in \Rk^{1/2}$ and $\kappa=1$.
\end{proposition}
\begin{proof}
Finite-dimensional convergence follows trivially as the target process is centered Gaussian, thus convergence of the covariance matrix ensures finite-dimensional convergence. 
To prove convergence we only need to to show that the approximating sequence is tight, by verifying the criteria from Theorem~\ref{thm:HolderConvergencePartition} as follows:
$$
\EE\left[\left|\widetilde{\Gg}^\alpha W_n(t_i)-\widetilde{\Gg}^\alpha W_n(t_j)\right|^{2p}\right]\leq C |t_i-t_j|^{2p\alpha+p},
$$ 
for all $t_i, t_j \in \Tt_n$, 
for $p\geq 1$ and some constant $C\geq 0$.
Without loss of generality assume $t_j< t_i$ and take $\kappa =1$. Define
$$
\widetilde{\sigma}^2:=\EE\left[\left|\widetilde{\Gg}^\alpha W_n(t_i)-\widetilde{\Gg}^\alpha W_n(t_j)\right|^{2}\right].
$$
We note that
$$
\EE\left[n\left(\int_{t_{j-1}}^{t_{j}}(t_{i}-s)^\alpha\D s\int_{t_{j-1}}^{t_{j}}\D W_s-\int_{t_{j-1}}^{t_{j}}(t_{j}-s)^\alpha\D W_s\right)^2\right]\leq \int_{t_{j-1}}^{t_{j}}\left((t_{i}-s)^\alpha-(t_j-s)^\alpha\right)^2\D s,
$$
where we have used Chebyshev's integral inequality. Therefore,
\begin{align*}
\widetilde{\sigma}^2&\leq\sum_{k=1}^{j}\left(\int_{t_{k-1}}^{t_k} (t_i-s)^\alpha-(t_j-s)^\alpha\D s \right)^2+\int_{t_{j-1}}^{t_{j}}\left((t_{i}-s)^\alpha-(t_j-s)^\alpha\right)^2\D s + \int_{t_{j}}^{t_{i}}(t_{i}-s)^{2\alpha}\D s \\
&\leq \sum_{k=1}^{j-1}\left(\int_{t_{k-1}}^{t_k} (t_i-s)^{\alpha}-(t_j-s)^\alpha \D s \right)^2+ \int_{t_{j-1}}^{t_{j}}(t_i-t_j)^{2\alpha}\D s+\frac{1}{2\alpha+1}(t_i-t_j)^{2\alpha+1} \\
&\leq \sum_{k=1}^{j-1}\left(\int_{t_{k-1}}^{t_k} \left( t_i-t_j \right)^{\alpha}\D s \right)^2+ \frac{1}{n}(t_i-t_j)^{2\alpha}\D s \frac{1}{2\alpha+1}(t_i-t_j)^{2\alpha+1}\\
&\leq \frac{1}{n} \left( t_i-t_j \right)^{2\alpha}+ \frac{1}{n}(t_i-t_j)^{2\alpha}\D s\frac{1}{2\alpha+1}(t_i-t_j)^{2\alpha+1}\leq C(t_i-t_j)^{2\alpha+1} \end{align*}
where we have used the power inequality $|x|^p-|y|^p\leq |x-y|^p$ for $p\leq 1$.
Thus, by standard moment properties of Gaussian random variables~\cite[Theorem 2.1]{Boucheron13} we obtain 
$$\EE\left[\left|\widetilde{\Gg}^\alpha W_n(t_i)-\widetilde{\Gg}^\alpha W_n(t_j)\right|^{2p}\right]\leq \widetilde{C} \widetilde{\sigma}^{2p} \leq \widetilde{C}(t_i-t_j)^{2p\alpha+p},$$
which gives the desired result.
\end{proof}
 We further note that Proposition~\ref{prop: proof Hybrid scheme} and Theorem~\ref{thm:MainThm} ensure the weak convergence of the log-stock price for the Hybrid scheme as well.
\begin{remark}
Proposition~\ref{prop: proof Hybrid scheme} may easily be extended to a $d$-dimensional Brownian motion~$W$ (for example for multifactor volatility models), 
also providing a weak convergence result for the $d$-dimensional version of the Hybrid scheme 
recently developed by Heinrich, Pakkanen and Veraart~\cite{HPV19}.
\end{remark}

\subsection{Application to fractional binomial trees}
We consider a binomial setting for the Riemann-Liouville fractional Brownian motion 
$\Gg^{H-1/2} W$ with $g(u)\equiv u^{H-1/2}$, $H\in(0,1)$,
for which Theorem~\ref{thm: GFO weak convergence} provides a weakly converging sequence. 
On the partition~$\Tt_n$, with a Bernoulli sequence~$\{\zeta_i\}_{i=1}^n$ satisfying
$\PP(\zeta_i=1)=\PP(\zeta_i=-1)=\frac{1}{2}$ for all~$i$
(justified by Theorem~\ref{thm:MainThm}), 
the approximating sequence reads
$$
(\Gg^{H-1/2} W_n)(t_i)
 := \frac{1}{\sqrt{n}}\sum_{k=1}^{i}\left(t_{i} - t_{k-1} \right)^{H-1/2}\zeta_k,\quad\text{for } i=0,\ldots,n.
$$
Figure~\ref{bino_tree_H_75} shows a fractional binomial tree structure for $H=0.75$ and $H=0.1$. 
Despite being symmetric, such trees cannot be recombining due to the (non-Markovian) 
path-dependent nature of the process. 
It might be possible, in principle, to modify the tree at each step to make it recombining, 
following the procedure developed in~\cite{ADS14} for Markovian stochastic volatility models.
It is not so straightforward though, and requires a dedicated thorough analysis which we leave for future research.

\begin{figure}[h!]
\centering
\includegraphics[scale=0.5]{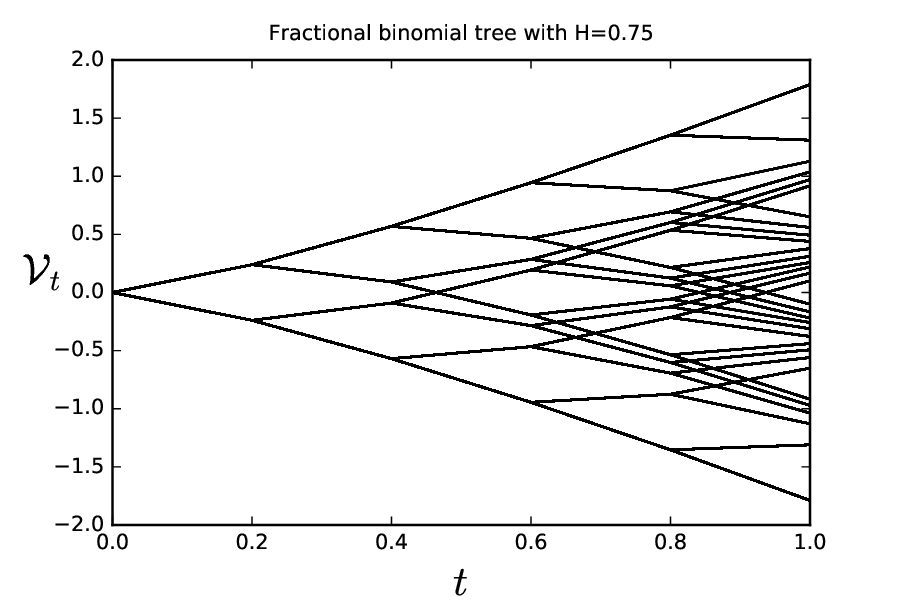}
\includegraphics[scale=0.5]{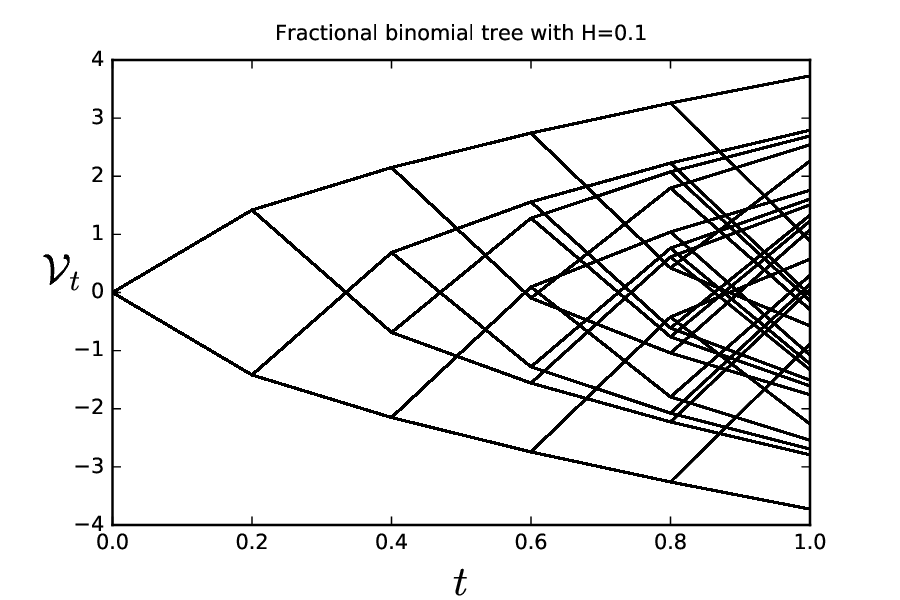}
\caption{Binomial tree for the Riemann-Liouville fractional Brownian motion with $n=5$ discretisation points
for $H=0.75$ (left) and $H=0.1$ (right).}
\label{bino_tree_H_75}
\end{figure}

\subsection{Monte-Carlo}
Theorem~\ref{thm:MainThm} introduces the theoretical foundations 
of Monte-Carlo methods (in particular for path-dependent options) for rough volatility models. 
In this section we give a general and easy-to-understand recipe to implement the class of rough volatility models~\eqref{eq:System}. For the numerical recipe to be as general as possible, we shall consider the general time partition $\Tt:=\{t_i=\frac{iT}{n}\}_{i=0,\ldots,n}$ on $[0,T]$ with $T>0$.
\begin{algorithm}[Simulation of rough volatility models]\label{algo:rDonsker}\ 
\begin{enumerate}
\item Simulate two $\Nn(0,1)$ matrices
$\{\xi_{j,i}\}_{\substack{j=1,\ldots,M\\i=1,\ldots,n}}$ and 
$\{\zeta_{j,i}\}_{\substack{j=1,\ldots,M\\i=1,\ldots,n}}$  
with $\text{corr}(\xi_{j,i},\zeta_{j,i})=\rho$;
\item \label{step 1} simulate M paths of~$Y_n$ via\footnote{Here, $Y_n^{j}(t_i)$ denotes the $j$-th path~$Y_n$ evaluated at the time point~$t_i$,
which is different from the notation~$Y_n^{j}$ in the theoretical framework above, 
but should not create any confusion.}
$$
Y_n^{j}(t_i)
 \displaystyle = \frac{T}{n}\sum_{k=1}^{i}b(Y_n^{j}(t_{k-1}))
  + \frac{T}{\sqrt{n}}\sum_{k=1}^{i}\sit\left(Y_n^{j}(t_{k-1})\right)\zeta_{j,k},
  \quad i=1,\ldots,n\text{ and }j=1,\ldots,M,
$$
 and also compute
$$
\Delta Y^{j}_n(t_i) := Y^{j}_n(t_{i}) - Y^{j}_n(t_{i-1}),
\quad i=1,\ldots,n \text{ and }j=1,\ldots,M,
$$
\item \label{step 2 frac} Simulate $M$ paths of the fractional driving process~$((\Gg^\alpha Y_n)(t))_{t \in \Tt}$ using 
$$
(\Gg^\alpha Y_n)^{j}\left(t_i\right)
 := \sum_{k=1}^{i}g(t_{i-k+1})\Delta Y^{j}_n(t_k)
 = \sum_{k=1}^{i}g(t_k)\Delta Y^{j}_n(t_{i-k+1}),
 \quad i=1,\ldots,n\text{ and }j=1,\ldots,M.
 $$
The complexity of this step is in general of order $\Oo(n^2)$ (see Appendix~\ref{sec: convolution} for details).
However, this step is easily implemented using discrete convolution with complexity $\Oo(n\log n)$ 
(see Algorithm~\ref{algo: convolution for rough vol} in Appendix~\ref{sec: convolution} for details in the implementation). 
With the vectors
$\gr := (g(t_i))_{i=1,\ldots,n}$
and
$\Delta Y^{j}_n := (\Delta Y_n^{j}(t_i))_{i=1,\ldots,n}$
for $j=1,\ldots,M$,
we can write
$(\Gg^\alpha Y_n)^j(\Tt)=\sqrt{\frac{T}{n}}(\gr\ast\Delta Y^j_n)$, for $j=1,\ldots,M$,
where~$\ast$ represents the discrete convolution operator.
\item  Use the forward Euler scheme to simulate the log-stock process,
for all $i=1,\ldots,n$, $j=1,\ldots,M$, as
$$
X^j(t_i) = X^j(t_{i-1})
 - \frac{1}{2}\frac{T}{n}\Phi\left(\Gg^\alpha Y_n\right)^j(t_{i-1})
 + \sqrt{\frac{T}{n}}\sqrt{\Phi\left(\Gg^\alpha Y_n\right)^j(t_{i-1})}\xi_{j,i}.
$$
\end{enumerate}
\end{algorithm}
\begin{remark}\ 
\begin{itemize}
\item
When $Y=W$, we may skip step~\eqref{step 1} and replace $\Delta Y^j_n(t_i)$ by $\sqrt{T/n}\zeta_{i,j}$ 
on step~\eqref{step 2 frac}.
\item 
Step~\eqref{step 2 frac} may be replaced by the Hybrid scheme algorithm~\cite{BLP17} only when $Y=W$.
\end{itemize}
\end{remark}

Antithetic variates in Algorithm~\ref{algo:rDonsker}
are easy to implement as it suffices to consider the uncorrelated random vectors
$\zeta_j := (\zeta_{j,1},\zeta_{j,2},\ldots,\zeta_{j,n})$
and 
$\xi_j := (\xi_{j,1},\xi_{j,2},\ldots,\xi_{j,n})$, for $j=1,\ldots,M$.
Then
$(\rho \xi_j + \rrho\zeta_j,\xi_j)$, 
$(\rho \xi_j - \rrho\zeta_j,\xi_j)$, 
$(-\rho \xi_j - \rrho\zeta_j,-\xi_j)$ and 
$(-\rho \xi_j + \rrho\zeta_j,-\xi_j)$,
for $j=1,\ldots,M$,
constitute the antithetic variates, which significantly improves the performance of the Algorithm~\ref{algo:rDonsker} by reducing memory requirements, reducing variance and accelerating execution by exploiting symmetry of the antithetic random variables.

\subsubsection{Enhancing performance}\label{sec:MomentMatching}
A standard practice in Monte-Carlo simulation is to match moments of the approximating sequence with the target process. 
In particular, when the process is Gaussian, matching first and second moments suffices. 
We only illustrate this approximation for Brownian motion:
the left-point approximation~\eqref{eq:left-point rDonsker} (with $Y=W$) 
may be modified to match moments as
\begin{equation}\label{eq:optimal eval}
(\Gg^\alpha W)(t_i) \approx \frac{1}{\sigma\sqrt{n}}\sum_{k=1}^{i}g\left(t^*_k\right)\zeta_{k},
\qquad \text{for }i=0,\ldots,n,
\end{equation}
where $t^*_k$ is chosen optimally.
Since the kernel~$g(\cdot)$ is deterministic, 
there is no confusion with the Stratonovich stochastic integral, 
and the resulting approximation will always converge to the It\^o integral. 
The first two moments of~$\Gg^\alpha W$ read
$$
\EE\left[\left(\Gg^\alpha W\right)(t)\right] = 0
\qquad\text{and}\qquad
\VV\left[\left(\Gg^\alpha W\right)(t)\right] = \int_0^tg(t-s)^2\D s.
$$
The first moment of the approximating sequence~\eqref{eq:optimal eval} is always zero,
and the second moment reads
$$
\VV\left(\frac{1}{\sigma\sqrt{n}}\sum_{k=1}^{j-1}g\left(t^*_k\right)\zeta_{k}\right)
 =\frac{1}{n}\sum_{k=1}^{j-1}g\left(t^*_k\right)^2.
$$
Equating the theoretical and approximating quantities we obtain
$\frac{1}{n}g(t^*_k)^2\D s=\int_{t_{k-1}}^{t_{k}} g(t-s)^2\D s$
for $k=1,\ldots,n$, 
so that the optimal evaluation point can be computed as
\begin{equation}\label{eq: optimal eval}
g(t_k^*)=\sqrt{n\int_{t_{k-1}}^{t_{k}} g(t-s)^2\D s},
\qquad \text{for any }k=1,\ldots,n.
\end{equation}
With the optimal evaluation point the scheme is still a convolution 
so that Algorithm~\ref{algo: convolution for rough vol} in Appendix~\ref{sec: convolution} 
can still be used for faster computations.
In the Riemann-Liouville fractional Brownian motion case, 
$g(u) = u^{H-1/2}$, and the optimal point can be computed in closed form as
$$
t_k^* = \left(\frac{n}{2H}\left[\left(t-t_{k-1}\right)^{2H}-\left(t-t_{k}\right)^{2H}\right]\right)^{1/(2H-1)},
\qquad \text{for each }k=1,\ldots,n.
$$
\begin{proposition}\label{prop: proof moment-match scheme}
The moment matching sequence $(\widehat{\Gg}^\alpha W_n)$ defined as 
\begin{equation}\label{eq:moment-match}
\widehat{\Gg}^\alpha W_n(t):=\widehat{\Gg}^\alpha W_n\left(\frac{\nt}{n}\right) +(nt-\nt)\left(\widehat{\Gg}^\alpha W_n\left(\frac{\nt+1}{n}\right) -\widehat{\Gg}^\alpha W_n\left(\frac{\nt}{n}\right) \right), t\in\TT,
\end{equation}
where
\begin{equation}\label{eq:moment-aux}
\widehat{\Gg}^\alpha W_n(t_i)
 := \sum_{k=1}^{i}\sqrt{n\int_{t_{k-1}}^{t_k} g(t_i-s)^2\D s}\xi_{k}.
\end{equation}
with $(\xi_k)$ an iid family of centered sub-Gaussian random variables with 
$\EE[\xi_k^2]=\frac{1}{n}$
(namely $\PP(|\xi_k|>x)\leq C \E^{-vx^2}$ for all $x>0$ and some $C,v>0$). 
Then, convergence to~$\Gg^\alpha W$ 
holds in $\left(\Cc^{\alpha+1/2},\|\cdot\|_{\alpha+1/2}\right)$ for $\alpha \in \Rk^{1/2}$.
\end{proposition}
\begin{proof}
Finite-dimensional convergence follows from the Central Limit Theorem as the target process is centered Gaussian, 
thus convergence of the covariance matrix ensures finite-dimensional convergence. It then suffices to prove that the approximating sequence is tight in the desired space, which, in view of Theorem~\ref{thm:HolderConvergencePartition}, can be deduced by establishing the control
$$
\EE\left[\left|\widehat{\Gg}^\alpha W_n(t_i)-\widetilde{\Gg}^\alpha W_n(t_j)\right|^{2p}\right]\leq C |t_i-t_j|^{2p\alpha+p},
$$
for all $t_i,t_j\in\mathcal{T}_n$, for $p\geq 1$ and some constant $C\geq 0$. 
We have
$$
\widetilde{\sigma}^2:=\EE\left[\left(\widehat{\Gg}^\alpha W_n(t_i)-\widehat{\Gg}^\alpha W_n(t_j)\right)^2\right]=\EE\left[\left(\widehat{\Gg}^\alpha W_n(t_i)\right)^2\right]+\EE\left[\left(\widehat{\Gg}^\alpha W_n(t_j)\right)^2\right]-2\EE\left[\widehat{\Gg}^\alpha W_n(t_i)\widehat{\Gg}^\alpha W_n(t_j)\right].
$$
We note that
$$
\EE\left[\left(\widehat{\Gg}^\alpha W_n(t_i)\right)^2\right]=\sum_{k=1}^{i}\left(\sqrt{\int_{t_{k-1}}^{t_k} g(t_i-s)^2\D s}\right)^2 = \int_0^{t_i} g(t_i-s)^2\D s=\EE\left[\left(\Gg^\alpha W(t_i)\right)^2\right],
$$
and by Cauchy-Schwarz we also have
\begin{align*}
\EE\left[\widehat{\Gg}^\alpha W_n(t_i)\widehat{\Gg}^\alpha W_n(t_j)\right]&= \sum_{k=1}^{t_i\wedge t_j}\sqrt{\int_{t_{k-1}}^{t_k} g(t_i-s)^2 \D s \int_{t_{k-1}}^{t_k} g(t_j-s)^2 \D s } \\ &\geq \int_0^{t_i\wedge t_j} g(t_i-s)g(t_j-s)\D s=\EE\left[\Gg^\alpha W(t_i)\Gg^\alpha W(t_j)\right].
\end{align*}
We then obtain
$\widetilde{\sigma}^2\leq  \EE\left[\left|\Gg^\alpha W(t_i)-\Gg^\alpha W(t_j)\right|^{2}\right]$.
Finally, $\widetilde{\Gg}^\alpha W_n(t_i)-\widetilde{\Gg}^\alpha W(t_j)$ is sub-Gaussian as a linear combination of sub-Gaussian random variables, 
and the Gaussian moment inequality~\cite[Theorem 2.1]{Boucheron13} with the variance estimate~$\widetilde{\sigma}^2$ yields
$$
\EE\left[\left|\widehat{\Gg}^\alpha W_n(t_i)-\widehat{\Gg}^\alpha W(t_j)\right|^{2p}\right]\leq  \EE\left[\left|\Gg^\alpha W(t_i)-\Gg^\alpha W(t_j)\right|^{2p}\right]\leq \widetilde{C} |t_i-t_j|^{2p\alpha+p}.
$$
\end{proof}

\subsubsection{Reducing variance} 
As Bayer, Friz and Gatheral~\cite{BFG16} and Bennedsen, Lunde and Pakkanen~\cite{BLP17} pointed out, 
a major drawback in simulating rough volatility models is the very high variance of the estimators,
so that a large number of simulations are needed to produce a decent price estimate. 
Nevertheless, the rDonsker scheme admits a very simple conditional expectation technique 
which reduces both memory requirements and variance while also admitting antithetic variates. 
This approach is best suited for calibrating European type options.
We consider $\Ff^{B}_t=\sigma(B_s:s\leq t)$ and $\Ff^{W}_t=\sigma(W_s:s\leq t)$ 
the natural filtrations generated by the Brownian motions~$B$ and~$W$. 
In particular the conditional variance process $V_t|\Ff^{W}_t$ is deterministic. 
As discussed by Romano and Touzi~\cite{RT97},
and recently adapted to the rBergomi case by McCrickerd and Pakkanen~\cite{MP18},
we can decompose the stock price process as 
$$
\E^{X_t} = \Ee\left(\rho\int_0^t\sqrt{\Phi\left(\Gg^\alpha Y\right)(s)}\D W_s\right)
\Ee\left(\sqrt{1-\rho^2}\int_0^t\sqrt{\Phi\left(\Gg^\alpha Y\right)(s)}\D W^\bot_s\right)
 =: \E^{X^{||}_t}\E^{X^\bot_t},
$$
and notice that
$$
X_t\vert(\Ff^{W}_t\wedge \Ff^{B}_0)
\sim \Nn\left(X^{||}_t - (1-\rho^2)\int_{0}^{t}\Phi\left(\Gg^\alpha Y\right)(s)\D s, 
(1-\rho^2) \int_0^t\Phi\left(\Gg^\alpha Y\right)(s)\D s\right).
$$
Thus $\exp(X_t)$ becomes log-normal and the Black-Scholes closed-form formulae are valid here 
(European, Barrier options, maximum, etc.). The advantage of this approach is that the orthogonal Brownian motion~$W^\bot$ 
is completely unnecessary for the simulation, 
hence the generation of random numbers is reduced to a half, yielding proportional memory saving. 
Not only this, but this simple trick also reduces the variance of the Monte-Carlo estimate, 
hence fewer simulations are needed to obtain the same precision. 
We present a simple algorithm to implement the rDonsker with conditional expectation 
and assuming that $Y=W$.
\begin{algorithm}[Simulation of rough volatility models with Brownian drivers]\label{algo:rDonsker variance reduction}
Consider the equidistant grid~$\Tt$.
\begin{enumerate}
\item Draw a random matrix
$\{\zeta_{j,i}\}_{\substack{j=1,\ldots,M/2\\i=1,\ldots,n}}$  
with unit variance, 
and create antithetic variates $\{-\zeta_{j,i}\}_{\substack{j=1,\ldots,M/2\\i=1,\ldots,n}}$;
\item \label{step 2}Simulate~$M$ paths of the fractional driving process~$\Gg^\alpha W$ 
using discrete convolution (see Algorithm~\ref{algo: convolution for rough vol} 
in Appendix~\ref{sec: convolution} for details in the implementation):
$$
(\Gg^\alpha W)^j(\Tt)=\sqrt{\frac{T}{n}}(\gr\ast\zeta_j),\quad j=1,\ldots,M,
$$
and store in memory
$\displaystyle (1-\rho^2)\int_0^T(\Gg^\alpha W)^j(s)\D s
\approx (1-\rho^2)\frac{T}{n}\sum_{k=0}^{n-1} (\Gg^\alpha W)^j(t_k)
 =: \Sigma^j$ for each $j=1,\ldots,M$;
\item use the forward Euler scheme to simulate the log-stock process,
for each $i=1,\ldots,n$, $j=1,\ldots,M$, as
$$
X^j(t_i) = X^j(t_{i-1})
 - \frac{\rho^2}{2}\frac{T}{n}\Phi\left(\Gg^\alpha W\right)^j(t_{i-1})
  + \rho\sqrt{\frac{T}{n}}\sqrt{\Phi\left(\Gg^\alpha W\right)^j(t_{i-1})}\zeta_{j,i};
$$
\item Finally, we may compute any option using the Black-Scholes formula. For instance a Call option with strike $K$  and maturity $T\in\TT$ would be given by
$C^j(K)=\exp(X^j(T))\mathcal{N}(d^j_1)-K\mathcal{N}(d^j_2)$ for $j=1,\ldots,M$,
where $\Sigma^j=Var(X^j(T))$,
$d^j_1:=\frac{1}{\sqrt{\Sigma^j}}(X^j(T)-\log(K)+\frac{1}{2}\Sigma^j)$ 
and $d^j_2=d^j_1-\sqrt{\Sigma^j}$.
Thus, the output of the model would be
$C(K)=\frac{1}{M}\sum_{k=1}^{M}C^j(K)$.
\end{enumerate}
\end{algorithm}
The algorithm is easily adapted to general diffusions~$Y$ as drivers of the volatility 
(Algorithm~\ref{algo:rDonsker}\eqref{algo: step2}).
Algorithm~\ref{algo:rDonsker} is obviously faster than~\ref{algo:rDonsker variance reduction},
especially when using control variates. 
Nevertheless, with the same number of paths, Algorithm~\ref{algo:rDonsker variance reduction} 
remarkably reduces the Monte-Carlo variance, 
meaning in turn that fewer simulations are needed, making it very competitive for calibration.

\subsection{Numerical example: Rough Bergomi model}
Figures~\ref{fig: Monte_carlo_Cholesky}-\ref{fig: Monte_carlo_rDonsker_left} perform a numerical analysis of the Monte Carlo convergence as a function of $n$. 
We observe that the lower the~$H$, the larger~$n$ needs to be to achieve convergence. 
However, we also observe that for the Cholesky, rDonsker 
(naive and moment match) and Hybrid schemes and $H\geq 0.1$, 
with $N=252$ we already achieve a precision of 
order~$10^{-4}$, which is equivalent to a basis point in financial terms. 
For $H<0.1$ we might require~$n$ larger than~$252$, if precision is required beyond~$10^{-4}$. 
We also observe in Figure~\ref{fig: Monte_carlo_rDonsker_left} that the naive rDonsker approximation converges extremely slow for small $H$. Additionally, Figures~\ref{fig: Monte_carlo_error_H_1}-\ref{fig: Monte_carlo_error_H_40} measure the price estimations compared to the Cholesky method which is taken as benchmark. 
The Hybrid scheme tends to be closer to this benchmark 
especially for $H< 0.1$. 
When $H\geq 0.1$ for both the Hybrid scheme and rDonsker moment-match we observe an error less than~$10^{-4}$ for $n\geq 252$. 
It is noteworthy to mention that the naive rDonsker scheme has substantially worse convergence (at least an order of magnitude) than the other methods.  We note that the black lines in all figures represent the $99\%$ Monte Carlo standard deviations, hence errors below that threshold should be interpreted as noise.

\begin{figure}[h!]
\centering
\includegraphics[scale=0.5]{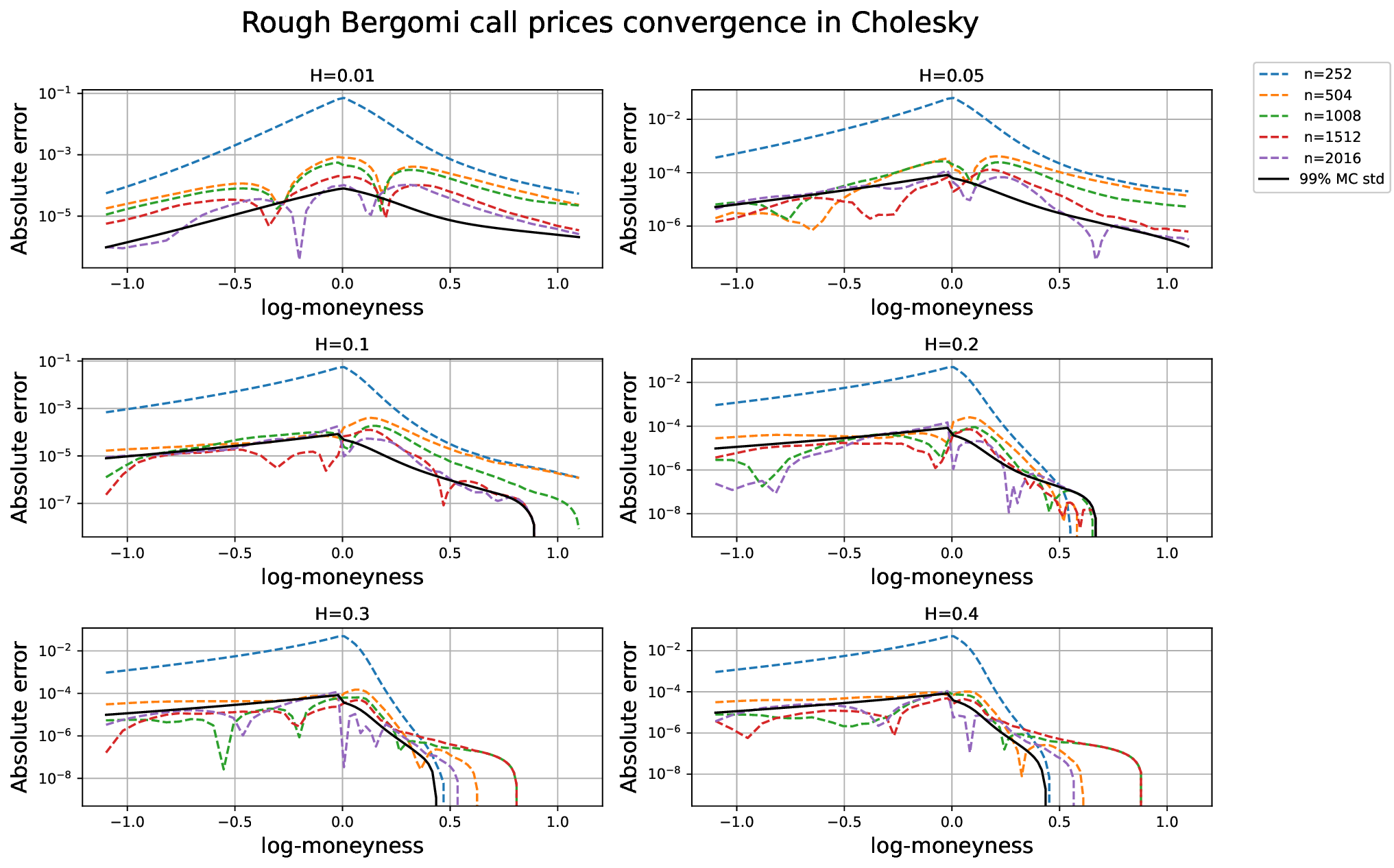}
\caption{Rough Bergomi Call option price convergence using Cholesky method with $\xi_0=0.04,\nu=2.3, \rho=-0.9, S_0=1, T=1$ with $2\cdot10^6$ simulations and antithetic variates.  Absolute error represents the difference between subsequent approximations, where $n$ represents the time grid size. For $n=252$ the previous discretisation is $n=126$.}
\label{fig: Monte_carlo_Cholesky}
\end{figure}

\begin{figure}[h!]
\centering
\includegraphics[scale=0.5]{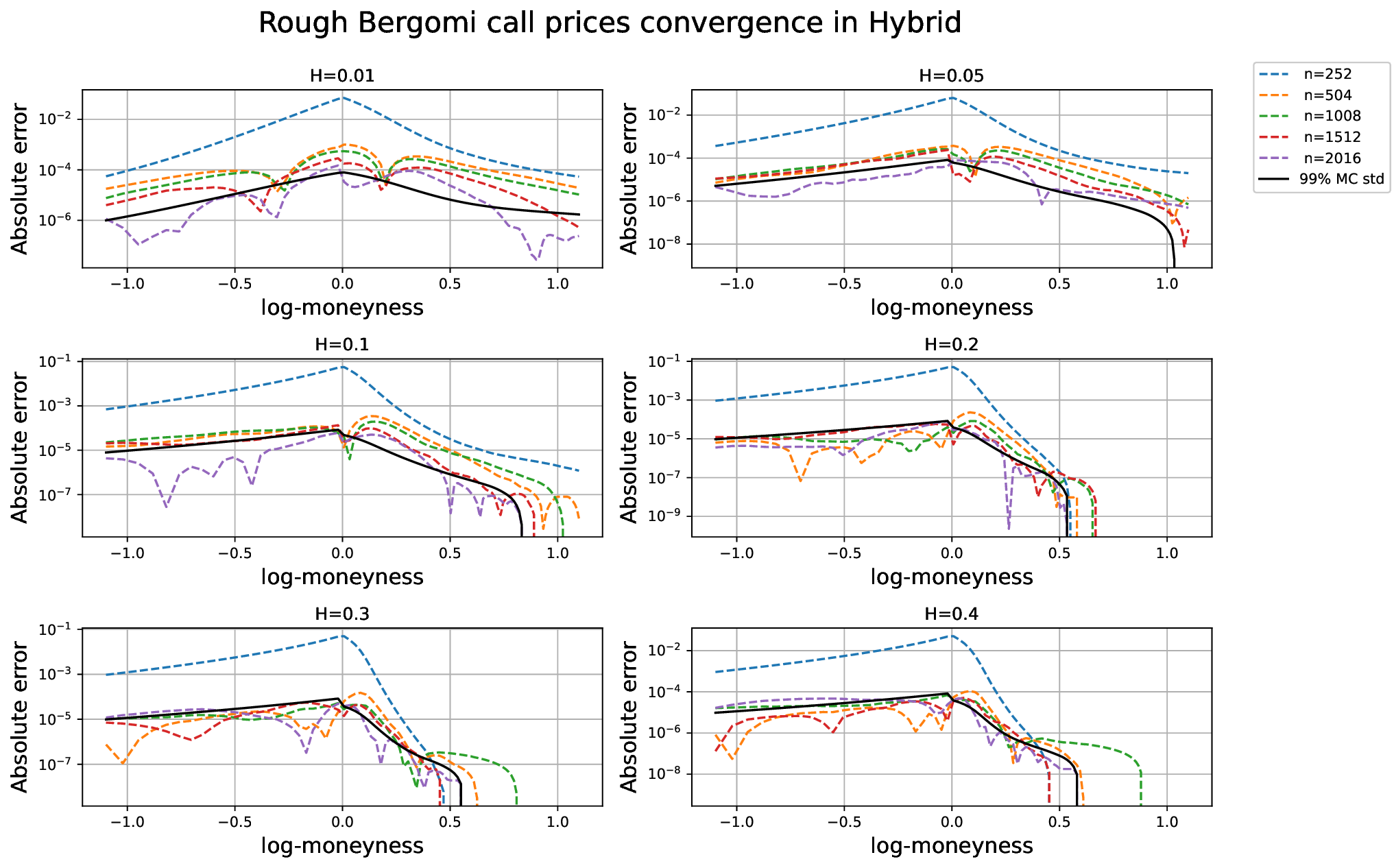}
\caption{Rough Bergomi Call option price convergence using Cholesky method with $\xi_0=0.04,\nu=2.3, \rho=-0.9, S_0=1, T=1$ with $2\cdot10^6$ simulations and antithetic variates.  Absolute error represents the difference between subsequent approximations, where $n$ represents the time grid size. For $n=252$ the previous discretisation is $n=126$.}
\label{fig: Monte_carlo_Hybrid}
\end{figure}

\begin{figure}[h!]
\centering
\includegraphics[scale=0.5]{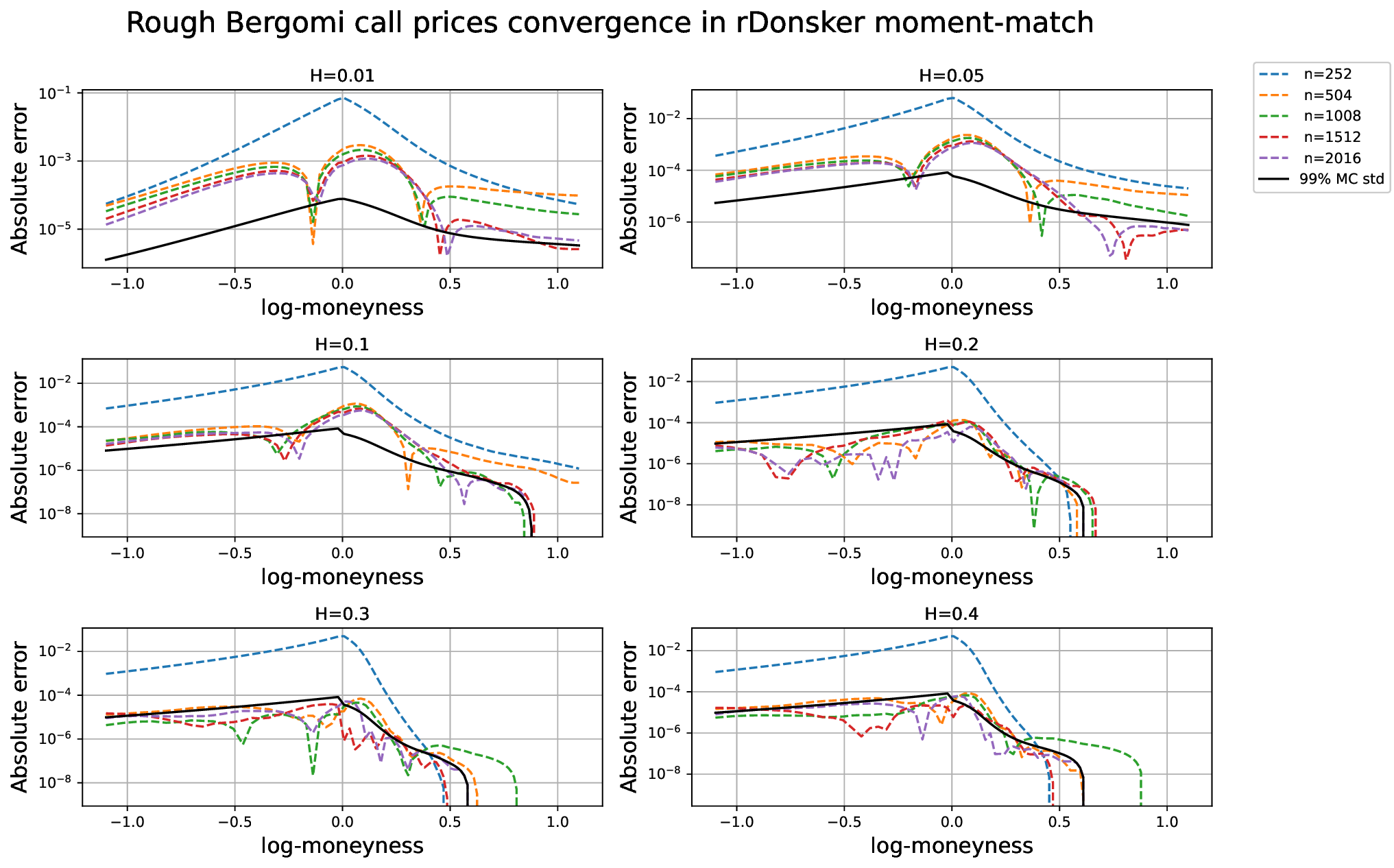}
\caption{Rough Bergomi Call option price convergence using rDonsker with moment-matching and 
$\xi_0=0.04,\nu=2.3, \rho=-0.9, S_0=1, T=1$ with $2\cdot10^6$ simulations and antithetic variates.  Absolute error represents the difference between subsequent approximations, 
with~$n$ the time grid size. 
For $n=252$ the previous discretisation is $n=126$.}
\label{fig: Monte_carlo_rDonsker}
\end{figure}

\begin{figure}[h!]
\centering
\includegraphics[scale=0.5]{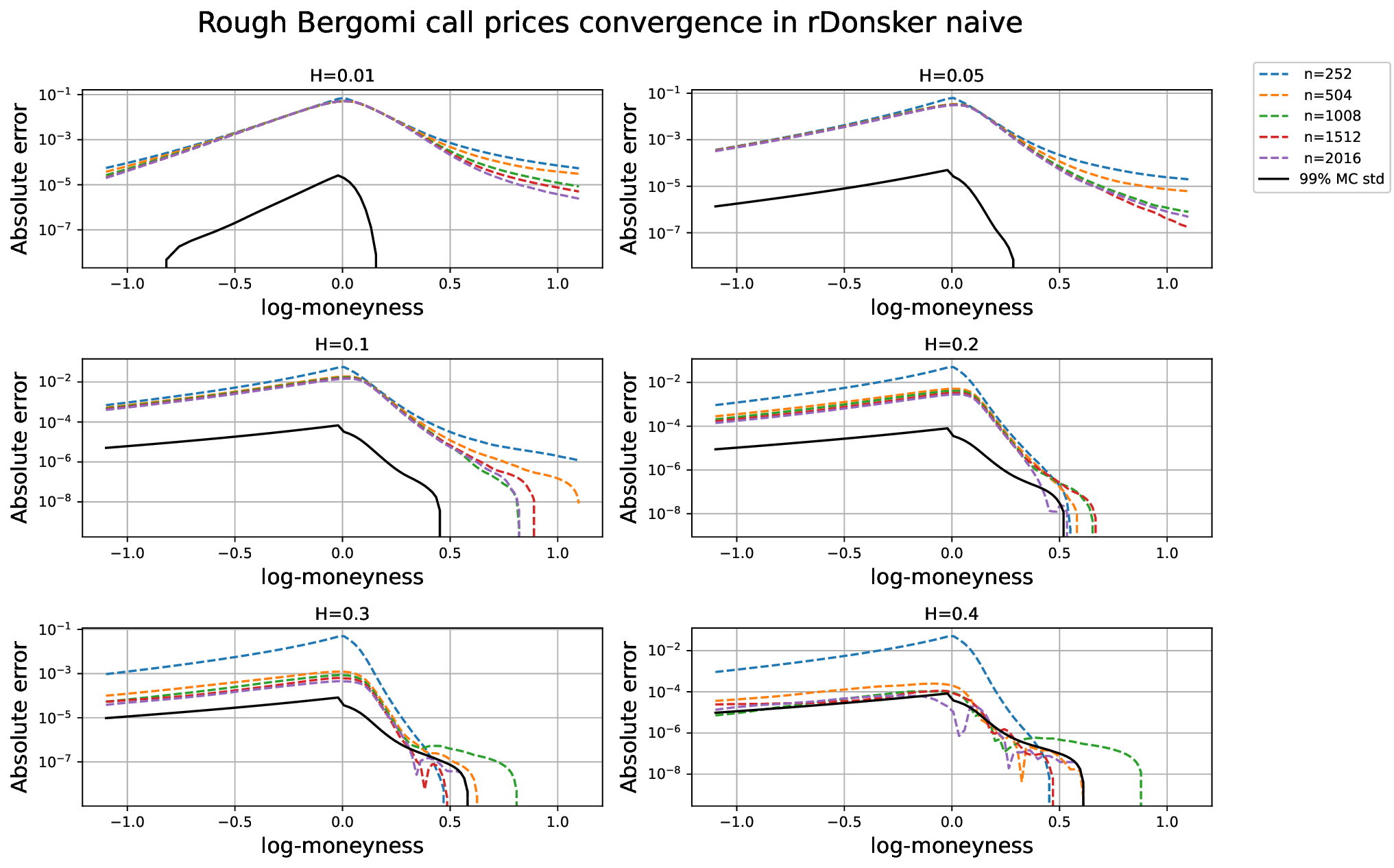}
\caption{Rough Bergomi Call option price convergence using rDonsker method with left point Euler and $\xi_0=0.04,\nu=2.3, \rho=-0.9, S_0=1, T=1$ with $2\cdot10^6$ simulations and antithetic variates.  Absolute error represents the difference between subsequent approximations, where $n$ represents the time grid size. For $n=252$ the previous discretisation is $n=126$.}
\label{fig: Monte_carlo_rDonsker_left}
\end{figure}

\begin{figure}[h!]
\centering
\includegraphics[scale=0.5]{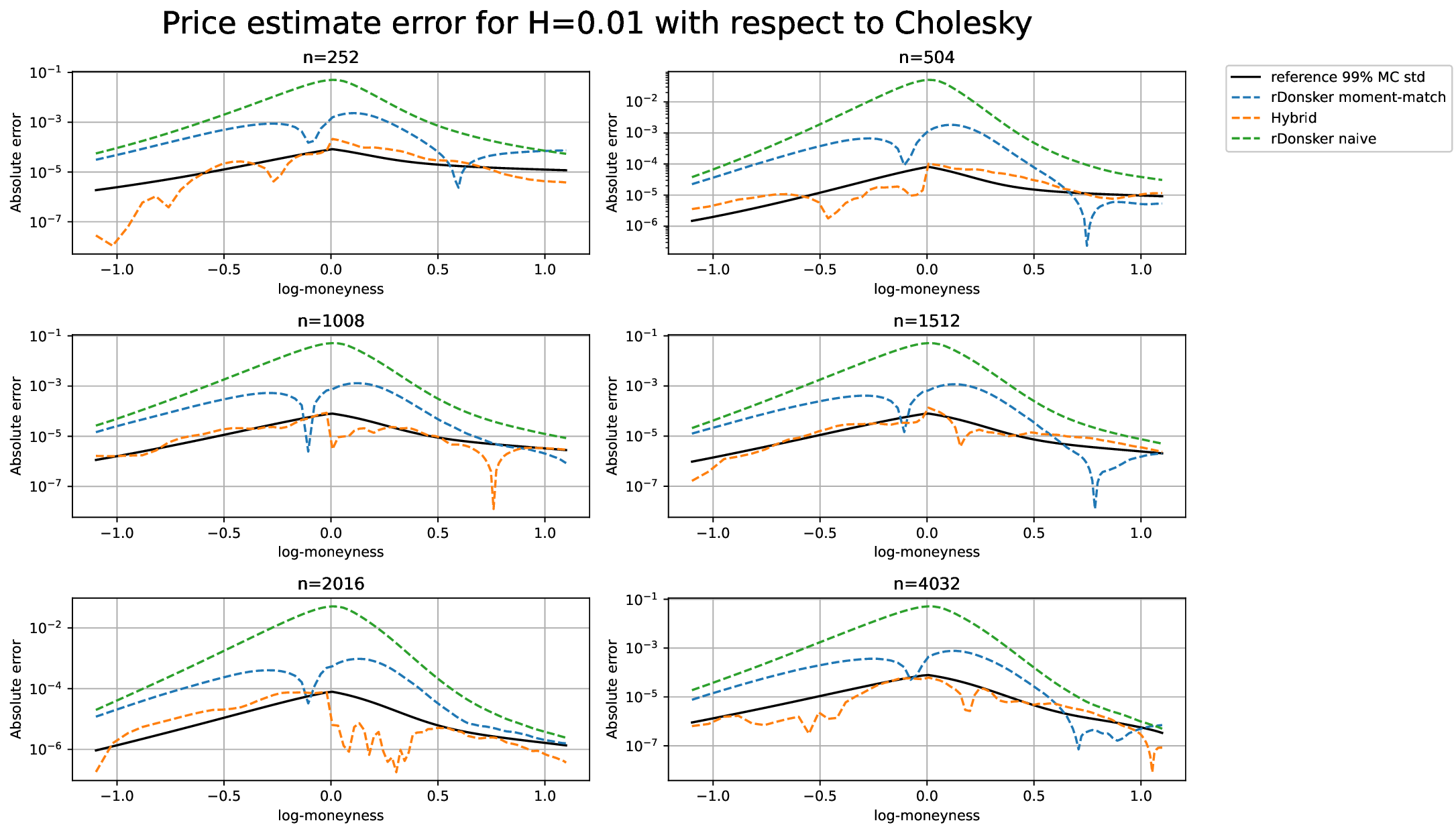}
\caption{Rough Bergomi Call option price comparison with $H=0.01,\xi_0=0.04,\nu=2.3, \rho=-0.9, S_0=1, T=1$ with $2\cdot10^6$ simulations and antithetic variates.  Absolute error represents the difference in price between different simulation schemes.}
\label{fig: Monte_carlo_error_H_1}
\end{figure}

\begin{figure}[h!]
\centering
\includegraphics[scale=0.5]{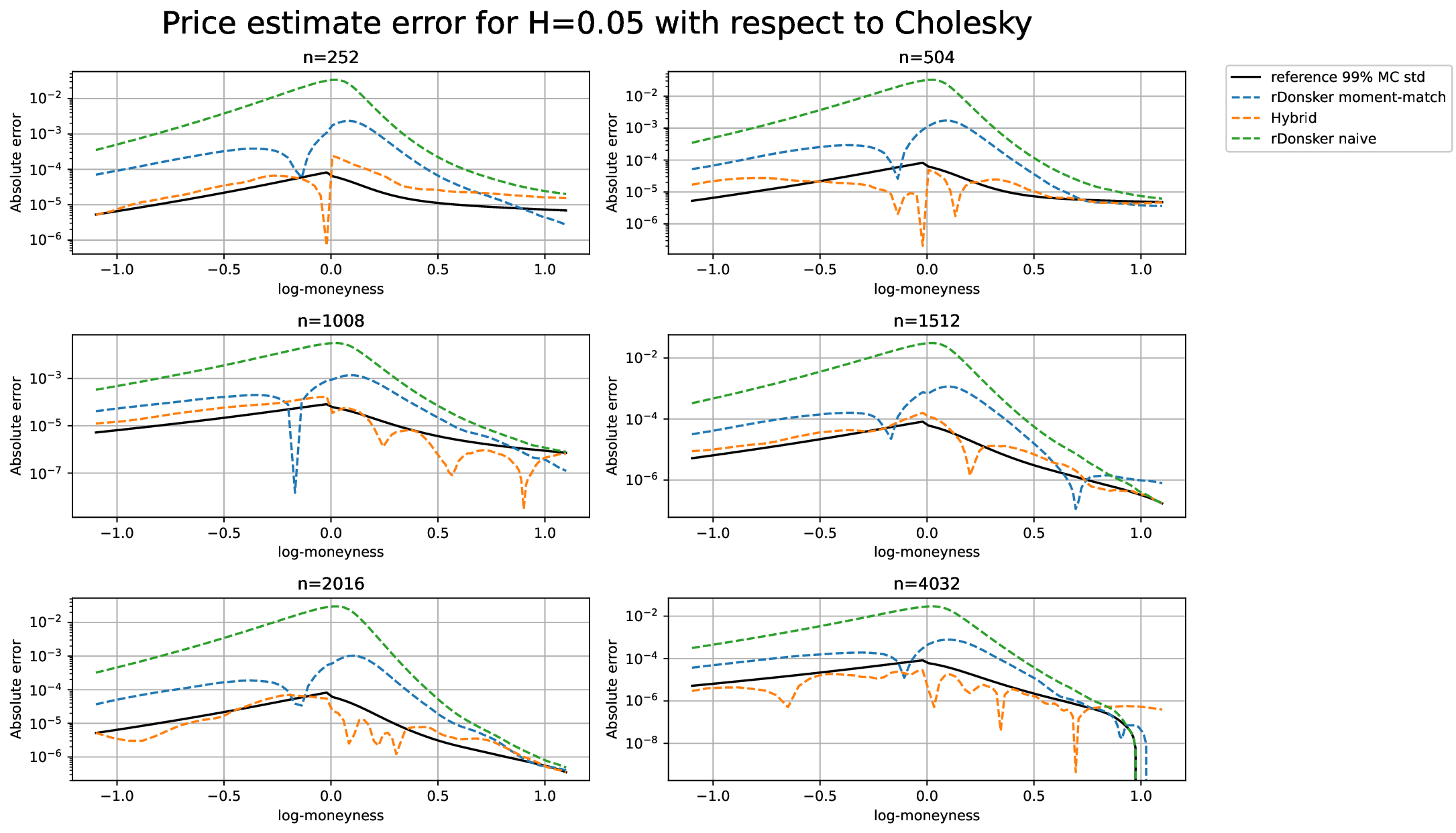}
\caption{Rough Bergomi Call option price comparison with $H=0.05,\xi_0=0.04,\nu=2.3, \rho=-0.9, S_0=1, T=1$ with $2\cdot10^6$ simulations and antithetic variates.  Absolute error represents the difference in price between different simulation schemes.}
\label{fig: Monte_carlo_error_H_5}
\end{figure}

\begin{figure}[h!]
\centering
\includegraphics[scale=0.5]{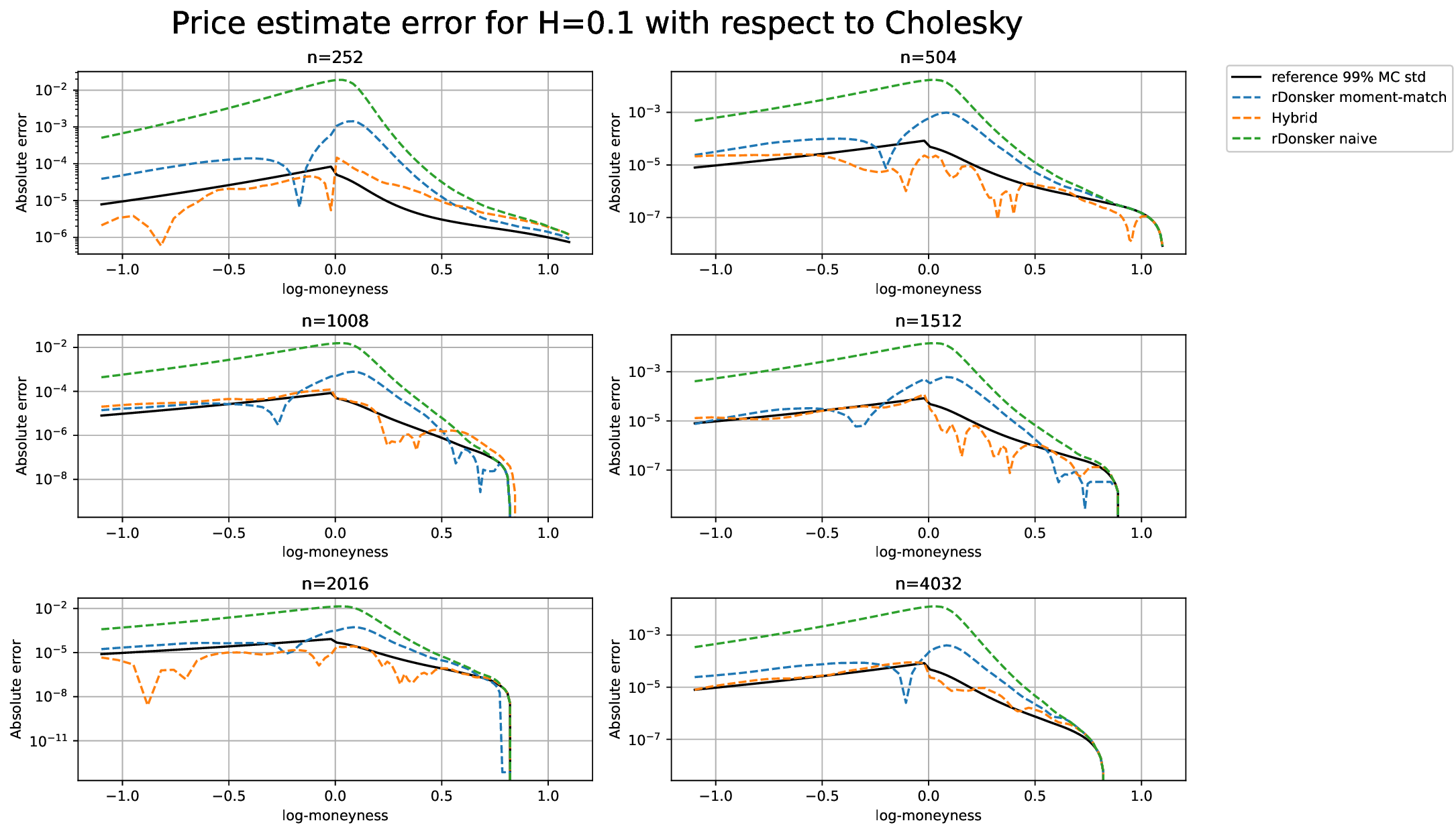}
\caption{Rough Bergomi Call option price comparison with $H=0.10,\xi_0=0.04,\nu=2.3, \rho=-0.9, S_0=1, T=1$ with $2\cdot10^6$ simulations and antithetic variates.  Absolute error represents the difference in price between different simulation schemes.}
\label{fig: Monte_carlo_error_H_10}
\end{figure}

\begin{figure}[h!]
\centering
\includegraphics[scale=0.5]{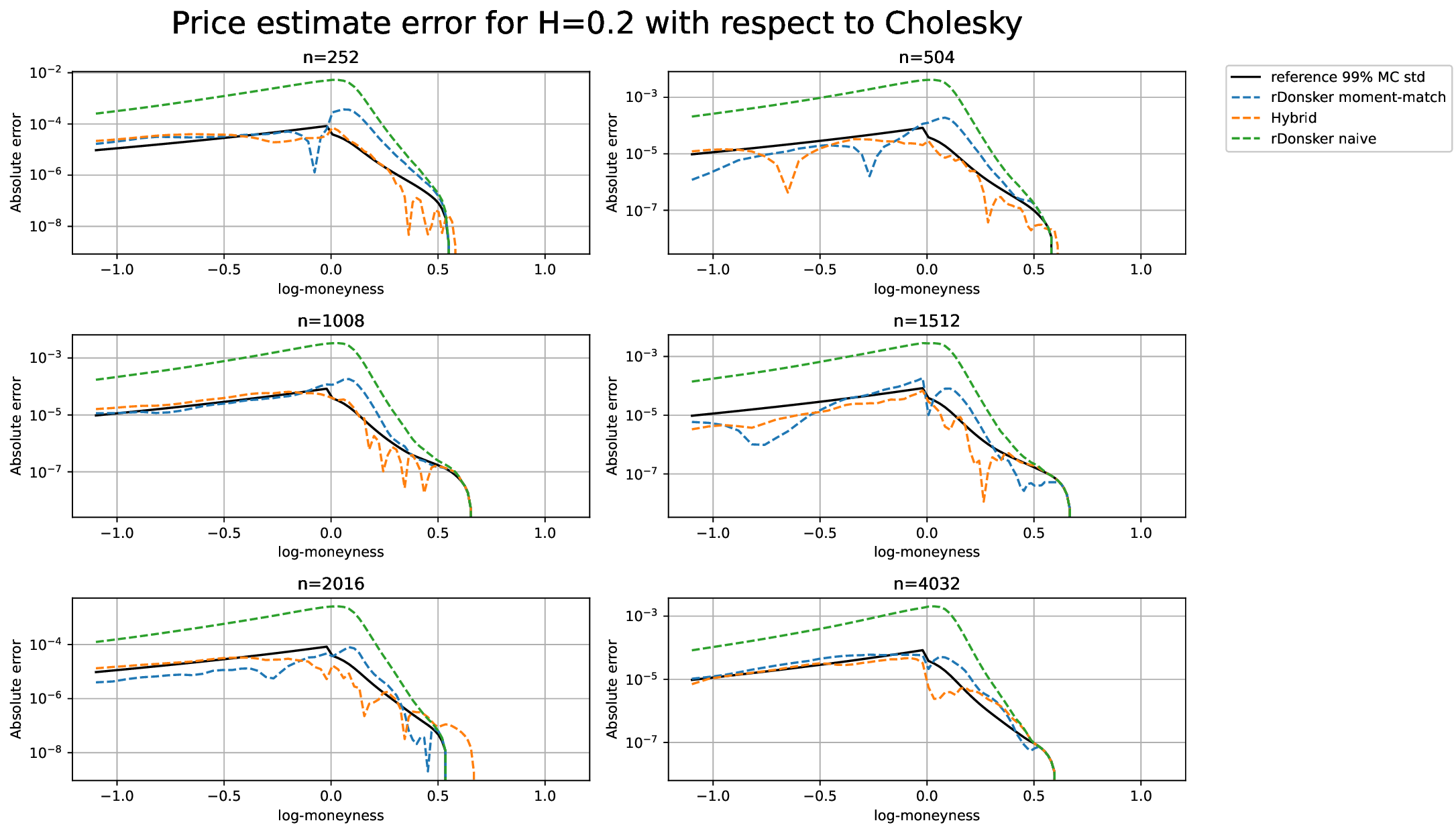}
\caption{Rough Bergomi Call option price comparison with $H=0.2,\xi_0=0.04,\nu=2.3, \rho=-0.9, S_0=1, T=1$ with $2\cdot10^6$ simulations and antithetic variates.  Absolute error represents the difference in price between different simulation schemes.}
\label{fig: Monte_carlo_error_H_20}
\end{figure}

\begin{figure}[h!]
\centering
\includegraphics[scale=0.5]{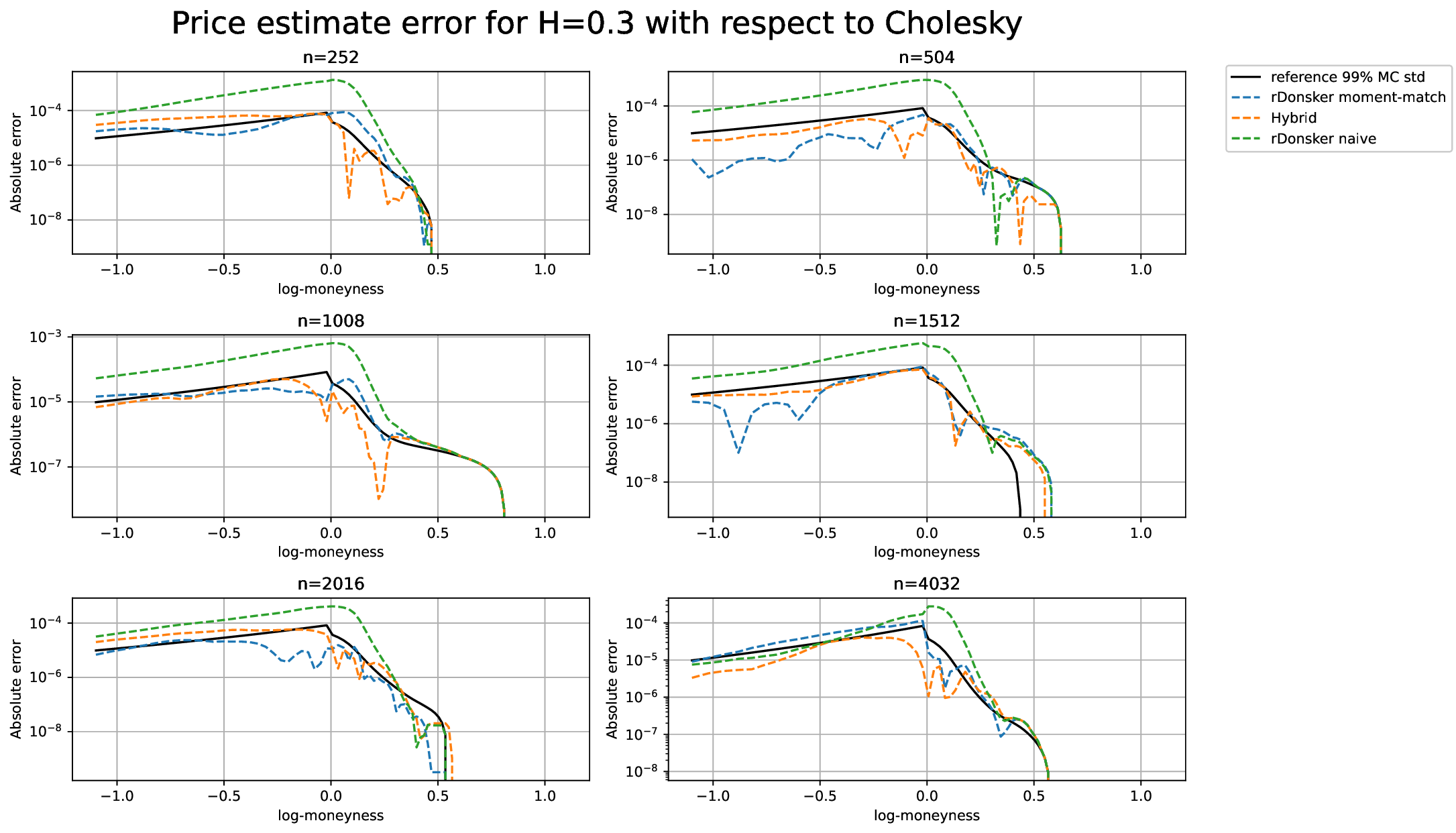}
\caption{Rough Bergomi Call option price comparison with $H=0.3,\xi_0=0.04,\nu=2.3, \rho=-0.9, S_0=1, T=1$ with $2\cdot10^6$ simulations and antithetic variates.  Absolute error represents the difference in price between different simulation schemes.}
\label{fig: Monte_carlo_error_H_30}
\end{figure}

\begin{figure}[h!]
\centering
\includegraphics[scale=0.5]{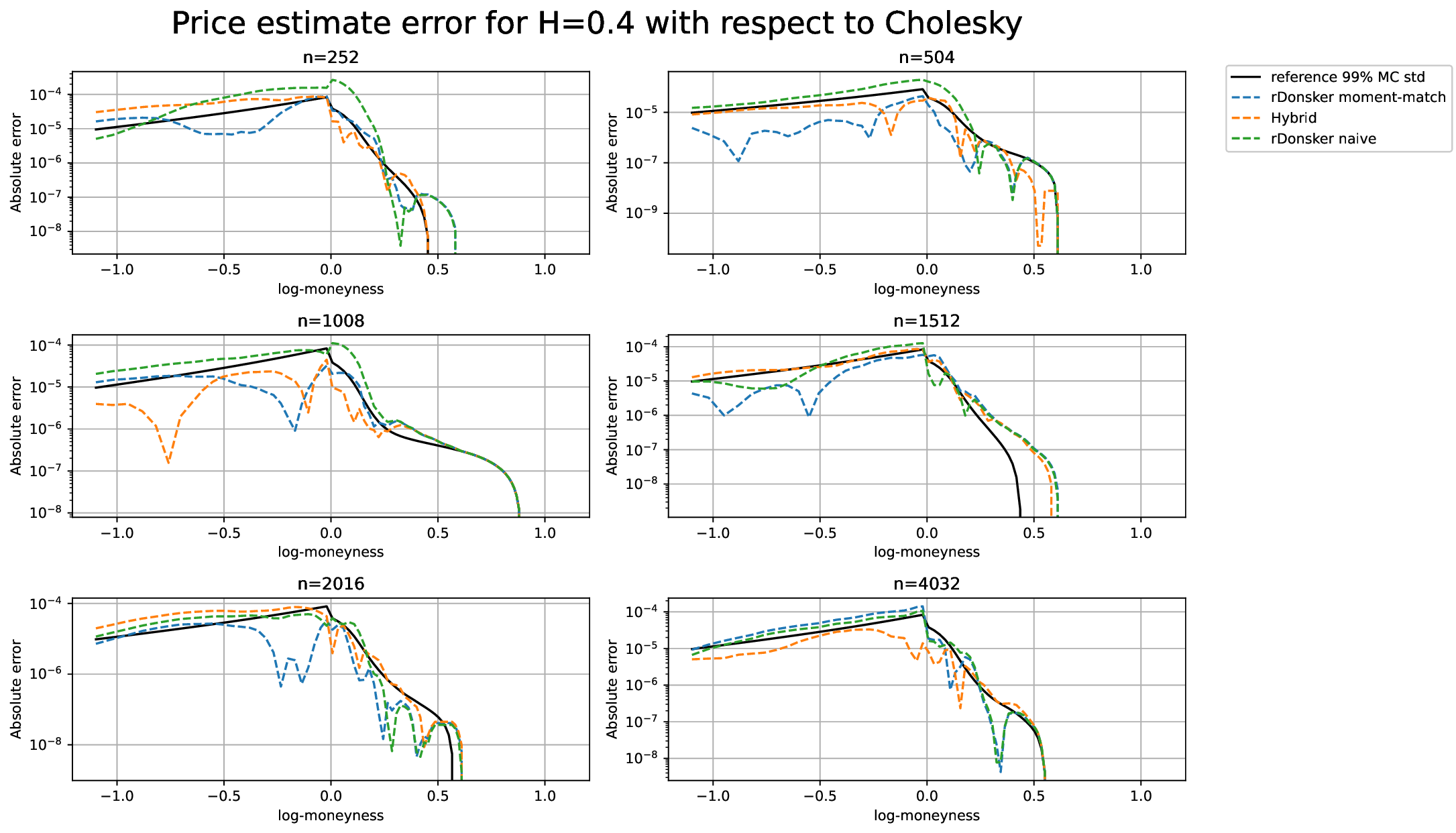}
\caption{Rough Bergomi Call option price comparison with $H=0.4,\xi_0=0.04,\nu=2.3, \rho=-0.9, S_0=1, T=1$ with $2\cdot10^6$ simulations and antithetic variates.  Absolute error represents the difference in price between different simulation schemes.}
\label{fig: Monte_carlo_error_H_40}
\end{figure}

\subsection{Speed benchmark against Markovian stochastic volatility models}
In this section we benchmark the speed of the rDonsker scheme against the Hybrid scheme and a classical Markovian stochastic volatility model using~$10^5$ simulations and averaging the speeds over~$10$ trials. 
For the former we simulate the rBergomi model~\cite{BFG16}, whereas for the latter we use the classical Bergomi~\cite{Bergomi05} model using a forward Euler scheme in both volatility and stock price. 
All three schemes are implemented in \texttt{Cython} to make the comparison fair, 
and to obtain speeds comparable to~\texttt{C++}. 
Figure~\ref{fig: speeds} shows that rDonsker is about twice slower than the Markovian case whereas the Hybrid scheme is approximately 2.5 times slower, which is expected from the complexities of both schemes. 
However, it is remarkable that the $\Oo(n\log n)$ complexity of the FFT stays almost constant with the grid size $n$ and the computational time grows almost linearly as in the Markovian case. 
We presume that this is the case since $n\ll 10000$ is relatively small. 
Figure~\ref{fig: speeds} also shows that rough volatility models can be implemented very efficiently and are not particularly slower than classical stochastic volatility models.

\begin{figure}[h!]
\centering
\includegraphics[scale=0.5]{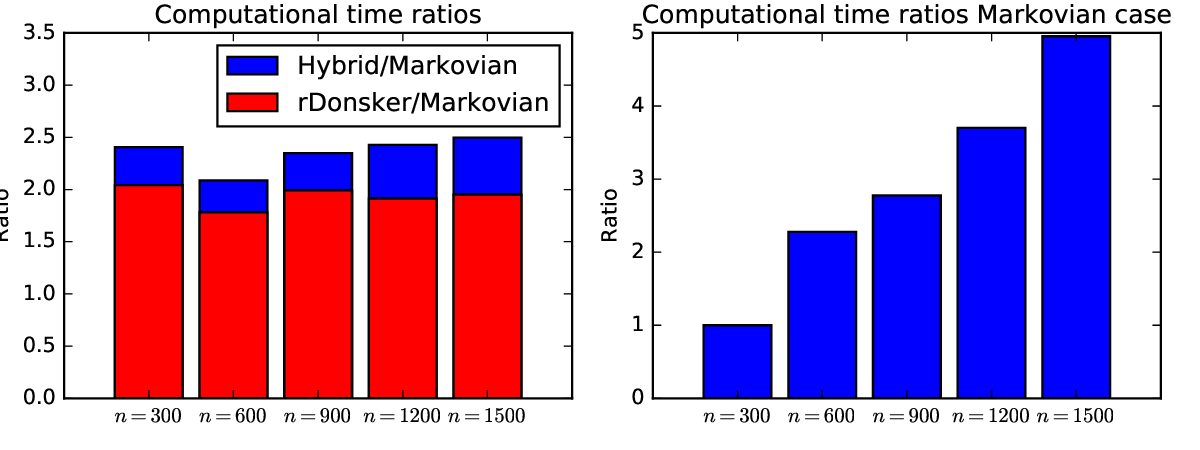}
\caption{Computational time benchmark using Hybrid scheme, rDonsker and Markovian (forward Euler) for different grid sizes~$n$.}
\label{fig: speeds}
\end{figure}

\subsection{Implementation guidelines and conclusion}
The numerical analysis above suggests some guidelines to implement rough volatility models driven by~$\TBSS$ processes
of the form $\Gg^{H-1/2} Y$, for some It\^o diffusion~$Y$:
\begin{table}[ht]
\centering 
\begin{tabular}{|c | c | c|} 
\hline
$H>0.1$ & $H\in[0.05, 0.1]$ & $H<0.05$ \\ [0.5ex]
\hline 
rDonsker & choice depends on error sensitivity & Hybrid scheme\\
\hline 
\end{tabular}
\end{table}

Regarding empirical estimates, Gatheral, Jaisson and Rosenbaum~\cite{GJR18} suggest that $H\approx 0.15$. 
Bennedsen, Lunde and Pakkanen~\cite{BLP16} give an exhaustive analysis of more than $2000$ equities 
for which $H\in[0.05,0.2]$. 
On the pricing side, Bayer, Friz and Gatheral~\cite{BFG16} and Jacquier, Martini and Muguruza~\cite{JMM18} found that calibration routines yield $H\in[0.05,0.10]$. 
Finally, Livieri, Mouti, Pallavicini and Rosenbaum~\cite{LMP+18} found evidence in options data that 
$H \approx 0.3$. 
Despite the diverse ranges found so far, there is a common agreement that $H<1/2$.
\begin{remark}
The rough Heston model presented by Guennoun, Jacquier, Roome and Shi~\cite{GJR+18} 
is out of the scope of the Hybrid scheme. 
Moreover, any process of the form~$\Gg^\alpha Y$, for some It\^o diffusion~$Y$ 
under Assumptions~\ref{assu:CoeffSDE} is, in general, out of the scope of the Hybrid scheme. 
This only leaves the choice of using  the rDonsker scheme, 
for which reasonable accuracy is obtained at least for H\"older regularities greater than~$0.05$.
\end{remark}

\subsection{Bushy trees and binomial markets}
Binomial trees have attracted a lot of attention from both academics and practitioners,
as their apparent simplicity provides easy intuition about the dynamics of a given asset. 
Not only this, but they are by construction arbitrage free and allow to price path-dependent options, 
together with their hedging strategy. 
In particular, early exercise options, in particular Bermudan or American options, 
are usually priced using trees, as opposed to Monte-Carlo methods.
The convergence stated in Theorem~\ref{thm:MainThm} lays the theoretical foundations 
to construct fractional binomial trees 
(note that Bernoulli random variables satisfy the conditions of the theorem). 
Figure~\ref{bino_tree_H_75} already showed binomial trees 
for fractional Brownian motion, 
but we ultimately need trees describing the dynamics of the stock price.

\subsubsection{A binary market}\label{sec: binary market}
We invoke Theorem~\ref{thm:MainThm} with the independent sequences
$\{\zeta_i\}_{i=1}^n$, $\{\zeta_i^\bot\}_{i=1}^n$ such that 
$\PP(\zeta_i=1)=\PP(\zeta^\bot_i=1)=\PP(\zeta_i=1)=\PP(\zeta^\bot_i=-1)=\frac{1}{2}$ for all~$i$. 
We further define, on~$\Tt$, for any $i=1, \ldots, n$,
\begin{align*}
B_n(t_{i})
 & = \sqrt{\frac{T}{n}}\sum_{k=1}^{i}\left(\rho\zeta_k+\rrho\zeta^\bot_k\right),\\
 Y_n(t_{i})
 & = \frac{T}{n}\sum_{k=1}^{i}b\left(Y_n(t_{k-1})\right)
  + \sqrt{\frac{T}{n}}\sum_{k=1}^{i}\sigma\left(Y_n(t_{k-1})\right)\zeta_k,
\end{align*}
the approximating sequences to~$B$ and~$Y$ in~\eqref{eq:System}. 
The approximation for~$X$ is then given by
$$
X_n(t_{i}) = X_n(t_{i-1})
  - \frac{1}{2}\frac{T}{n}\sum_{k=1}^{i}\Phi\left(\Gg^\alpha Y_n\right)(t_{k})
  + \sqrt{\frac{T}{n}}\sum_{k=1}^{i}\sqrt{\Phi\left(\Gg^\alpha Y_n\right)(t_{k})}\left(\rho\zeta_k+\rrho\zeta^\bot_k\right).
$$
In order to construct the tree we have to consider all possible permutations 
of the random vectors~$\{\zeta_i\}$ and~$\{\zeta^\bot_i\}$. 
Since each random variable only takes two values, this adds up to~$4^n$ possible combinations, 
hence the `bushy tree' terminology. 
When $\rho\in\{-1,1\}$, the magnitude is reduced to~$2^n$. 

\subsection{American options in rough volatility models}
There is so far no available scheme for American options (or any early-exercise options for that matter)
under rough volatility models, 
but the fractional trees constructed above provide a framework to do so. 
In the Black-Scholes model, American options can be priced using binomial trees by backward induction.
A key ingredient is the Snell envelope~\cite{Snell52} 
and the following representation by El Karoui~\cite{ElKaroui81}
($\TTw$ denotes the set of stopping times with values in~$\TT$):
\begin{definition}
Let $(X_t)_{t\in\TT}$ be an~$(\Ff_t)_{t\in\TT}$ adapted process, and~$\tau\in\TTw$.
The Snell envelope~$\Jj$ of~$X$ is defined as 
$\Jj(X)(t) := \esssup_{\tau\in\TTw} \EE(X_\tau|\Ff_t)$ for all $t\in\TT$.
\end{definition}
In plain words, the Snell envelope of~$X$ is the smallest supermartingale that dominates it.
Strictly speaking, it is necessary for~$X_\tau$ to be uniformly integrable for any $\tau\in\TTw$.
Following~\cite{Karatzas88}, an American option is nothing else than 
the smallest supermartingale dominating its European counterpart:
\begin{definition}
Let $C^e_t(k,T)$ and $P^e_t(k,T)$ denote European Call and Put prices at time~$t$, 
with log-strike~$k$ and maturity~$T$. 
Then the American counterparts, $C^a_t(k,T)$ and $P^a_t(k,T)$, are given by
$$
C^a_t(k,T) = \Jj(C^e(k,T))(t)
\qquad\text{and}\qquad
P^a_t(k,T) = \Jj(P^e(k,T))(t).
$$
\end{definition}
Preservation of weak convergence under the Snell envelope map 
is due to Mulinacci and Pratelli~\cite{MP98}, 
who proved that convergence takes place in the Skorokhod topology only if the Snell envelope is continuous. 
In our setting, the scheme for American options is fully justified by the following theorem:
\begin{theorem}\label{thm:JConv}
For~$V$ in~\eqref{eq:System}, 
if $\E^X$ is a true martingale
then $\left(\Jj(\E^{X_n})\right)_{n\geq 1}$
converges weakly to~$\Jj\left(\E^X\right)$ in the Skorokhod topology $(\Dd(\TT),d_{\Dd})$.
\end{theorem}
\begin{proof}
Since the sequence $(X_n)_{n\geq 1}$ converges weakly to~$X$ in $(\Dd(\TT),d_{\Dd})$,
for $X$ in~\eqref{eq:System}, the theorem follows from the Continuous Mapping Theorem
if we can show that~$\Jj$ is continuous.
El Karoui proved in~\cite[Chapter 2.14]{ElKaroui81} that the Snell envelope of an optional process,
uniformly integrable for all stopping times $\tau\in\TTw$, is continuous.
To prove the proposition, we therefore only need to check uniform integrability of the stock price~$\E^{X}$.
As~$\TT$ is a finite time horizon, Doob's optimal stopping theorem for martingales gives 
$\E^{X_{t}}=\EE[\E^{X_1}|\mathcal{F}_{t}]$ for all $t\in\TT$, thus~$\E^X$ on~$\TT$ is uniformly integrable and the result follows.
\end{proof}
Mulinacci and Pratelli~\cite{MP98} also gave explicit conditions for weak convergence 
to be preserved in the Markovian case. 
It is trivial to see that the pricing of American options in the rough tree scheme coincides with the classical backward induction procedure. 
We consider continuously compounded interest rate~$r$ and dividend yield~$d$.
\begin{algorithm}[American options in rough volatility models]
On the equidistant grid~$\Tt$,
\begin{enumerate}
\item \label{s1}construct the binomial tree using the explicit construction in Section~\ref{sec: binary market} and obtain $\{S^j_t\}_{t\in\Tt,j=1,...,4^n}$;
\item the backward recursion for the American with exercise value $h(\cdot)$ is given by 
$\widetilde{h}_{t_N} := h(S_{t_N})$ and 
$$
\widetilde{h}_{t_i} := 
\E^{(d-r)/n}\EE\left[\widetilde{h}_{t_{i+1}}|\Ff_{t_i}\right] \vee h(S_{t_i}), 
\qquad \text{for } i=N-1,\ldots,0,
$$
where $\EE[\cdot|\Ff_{t_i}] = \frac{1}{4}\left(\widetilde{h}^{\mathrm{++}}_{t_{i+1}}+\widetilde{h}^{\mathrm{+-}}_{t_{i+1}}+\widetilde{h}^{\mathrm{-+}}_{t_{i+1}}+\widetilde{h}^{\mathrm{--}}_{t_{i+1}}\right)$ 
and
$\widetilde{h}_{t_i}^{\mathrm{\pm\pm}}$ represents the outcome $(\zeta_i,\zeta^\bot_i)=(\pm 1,\pm 1)$ for the driving binomials, following the construction in Section~\ref{sec: binary market}.
\item finally, $\widetilde{h}_{0}$ is the price of the American option at inception of the contract.
\end{enumerate}
\end{algorithm}
The main computational cost of the scheme is the construction of the tree in Step~\ref{s1}. 
Once the tree is constructed, computing American prices for different options is a fast routine.

\subsubsection{Numerical example: rough Bergomi model}
The rough Bergomi model satisfies the martingale property in Theorem~\ref{thm:JConv} (b) for $\rho\leq 0$ (see Gassiat~\cite{Gassiat19}). We construct a rough volatility tree for the rough Bergomi model~\cite{BFG16} and check the accuracy of the scheme. 
Figures~\ref{fig: rough_Bergomi_trees} and~\ref{fig: rough_Bergomi_trees2} 
show the fractional trees for different values of~$H$ and for $\rho\in\{-1,1\}$. 
Both pictures show a markedly different behaviour, 
but as a common property we observe that as~$H$ tends to~$1/2$, 
the tree structure somehow becomes simpler. 

\begin{figure}[h!]
\centering
\includegraphics[scale=0.5]{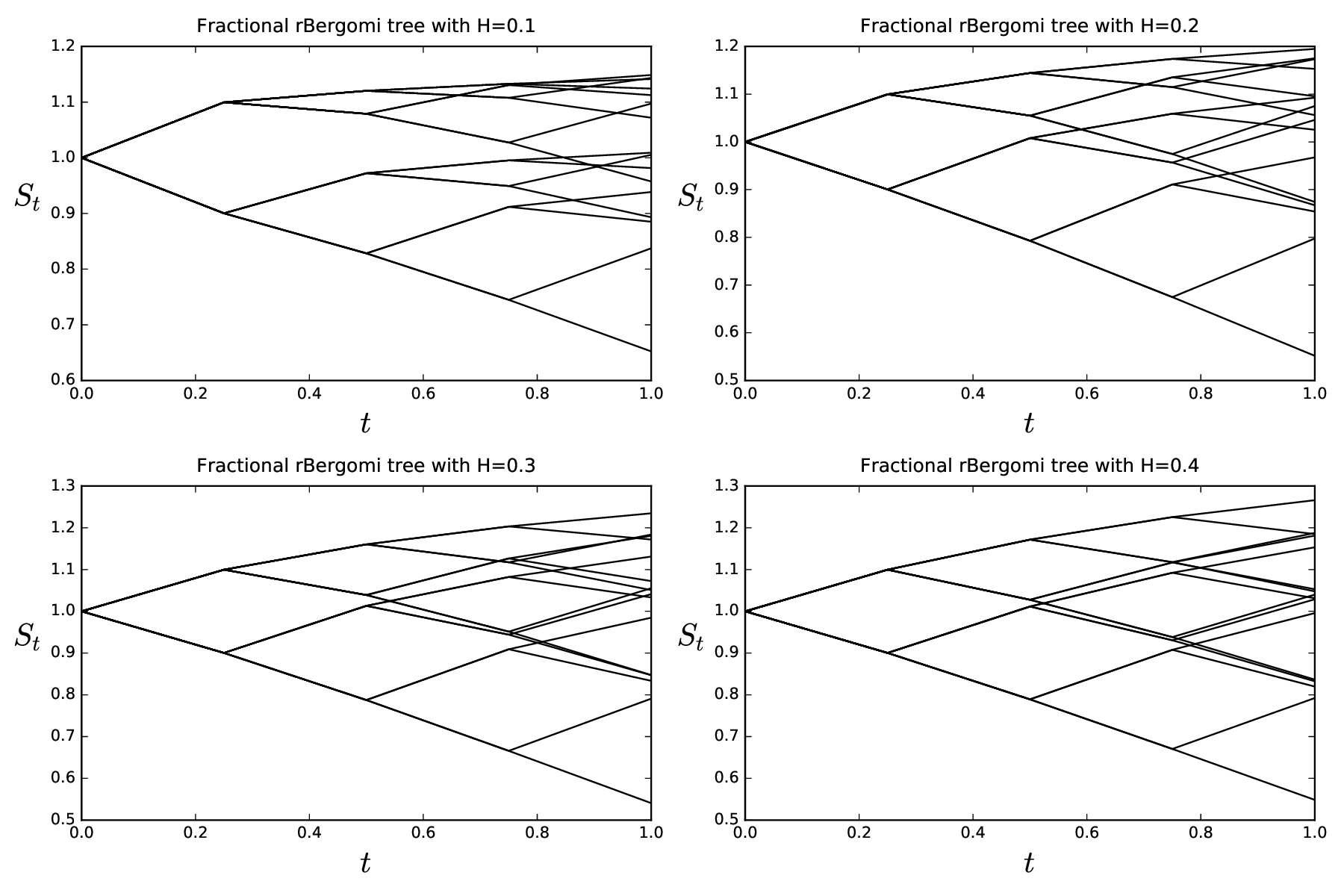}
\caption{rBergomi trees for different values of $H$, $(\nu, \rho, \xi_0) = (1,-1,0.04)$ 
with~$5$ time steps.}
\label{fig: rough_Bergomi_trees}
\end{figure}

\begin{figure}[h!]
\centering
\includegraphics[scale=0.5]{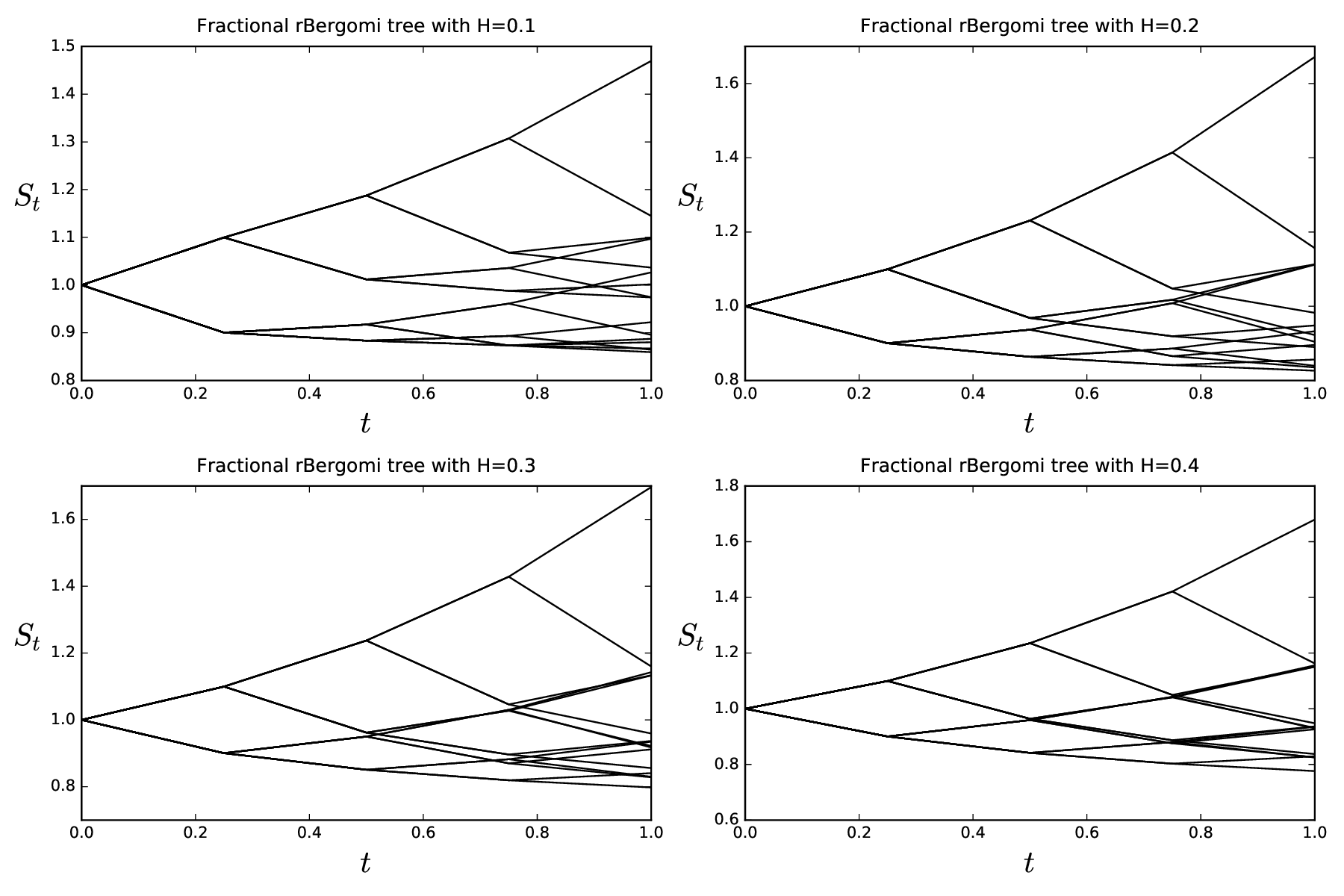}
\caption{rBergomi trees with $(\nu, \rho, \xi_0) = (1, 1, 0.04)$ 
and five time steps.}
\label{fig: rough_Bergomi_trees2}
\end{figure}

\subsubsection{European options}
Figure~\ref{fig: implied_trees} displays volatility smiles obtained using the tree scheme. 
Even though the time steps are not sufficient for small~$H$, the fit remarkably improves when $H\geq 0.15$, and always remains inside the $95\%$ confidence interval with respect to the Hybrid scheme. 
Moreover, the moment-matching approach from Section~\ref{sec:MomentMatching} 
shows a superior accuracy when $H\leq 0.1$, but is not sufficiently accurate. 
In Figure~\ref{fig: errors_trees} a detailed error analysis corroborates these observations:
the relative error is smaller than $3\%$ for $H\geq 0.15$. 

\begin{figure}[h!]
\centering
\includegraphics[scale=0.5]{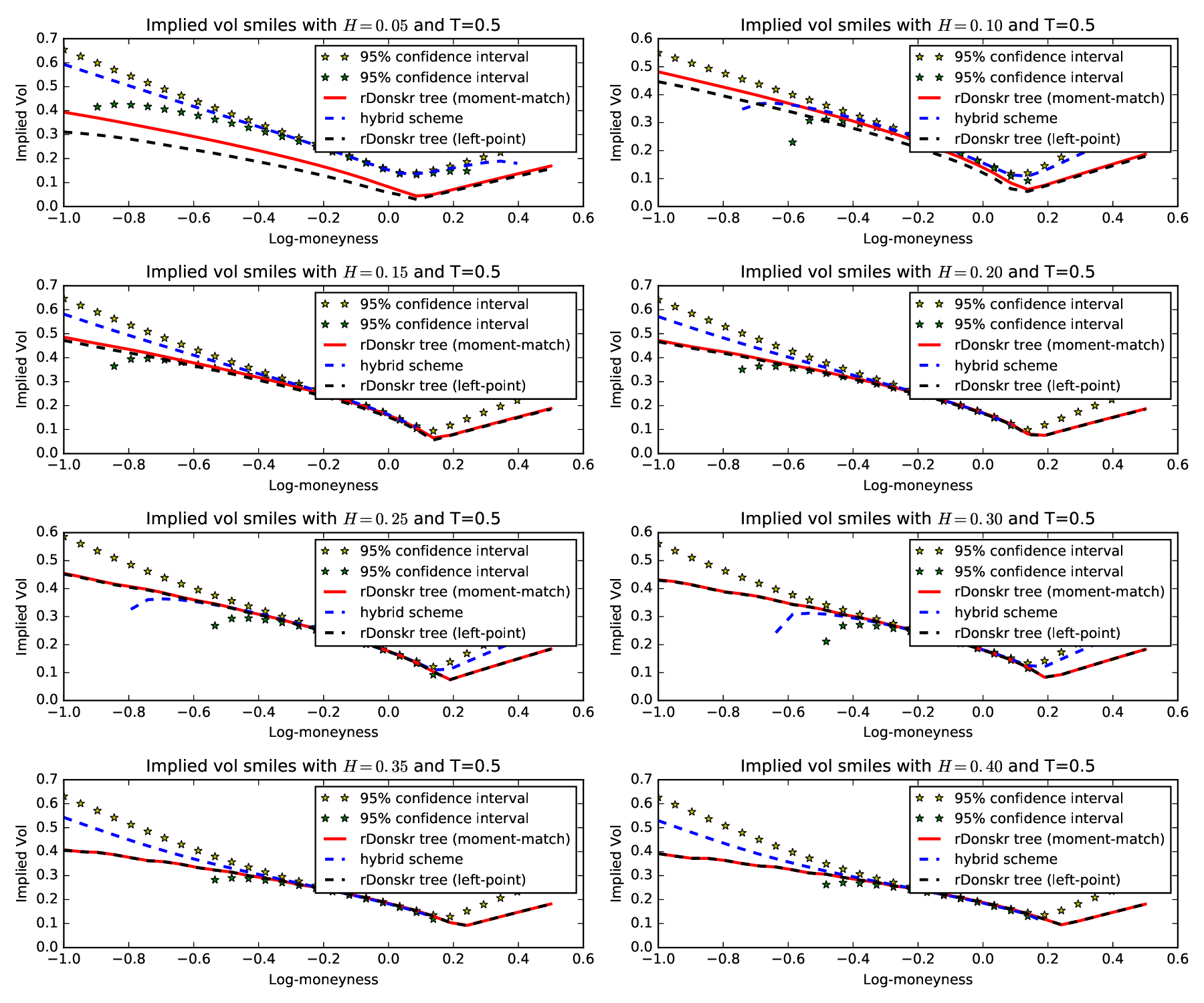}
\caption{rBergomi trees for different values of~$H$, $(\nu, \rho, \xi_0) = (1, -1, 0.04)$, $24$ time steps.}
\label{fig: implied_trees}
\end{figure}

\begin{figure}[h!]
\centering
\includegraphics[scale=0.5]{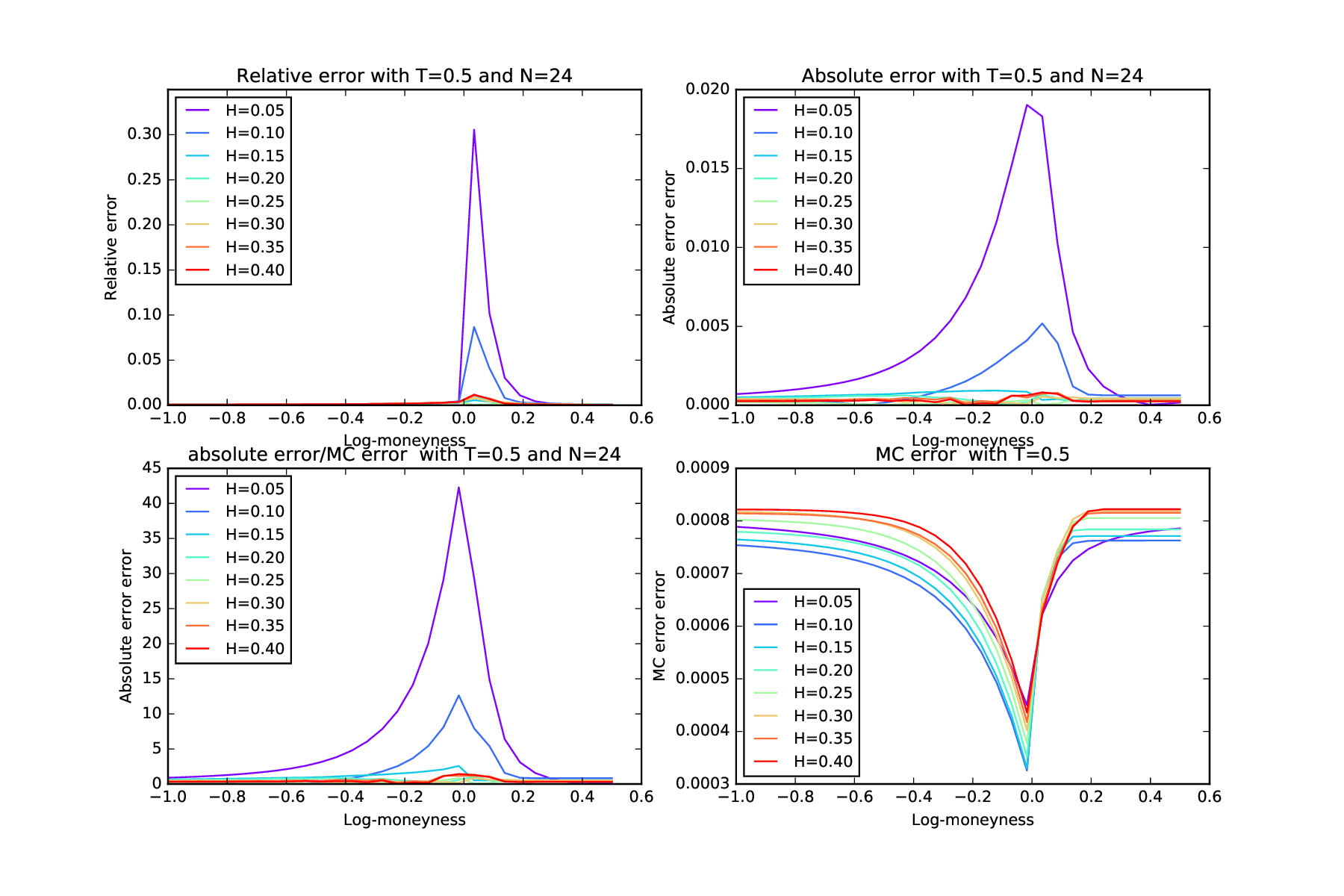}
\caption{Error analysis for the rDonsker moment-match tree for different values of~$H$, $(\nu, \rho, \xi_0) = (1, -1, 0.04)$ with~$24$ time steps.}
\label{fig: errors_trees}
\end{figure}

\subsubsection{American options}
In the context of American options, there is no benchmark to compare our result. 
However, the accurate results found in the previous section (at least for $H\geq0.15$) 
justify the use of trees to price American options. 
Figure~\ref{fig: american} shows the output of American and European Put prices 
with interest rates equal to $r=5\%$. 
Interestingly, the rougher the process (the smaller the~$H$), 
the larger the difference between in-the-money European and American options.

\begin{figure}[h!]
\centering
\includegraphics[scale=0.5]{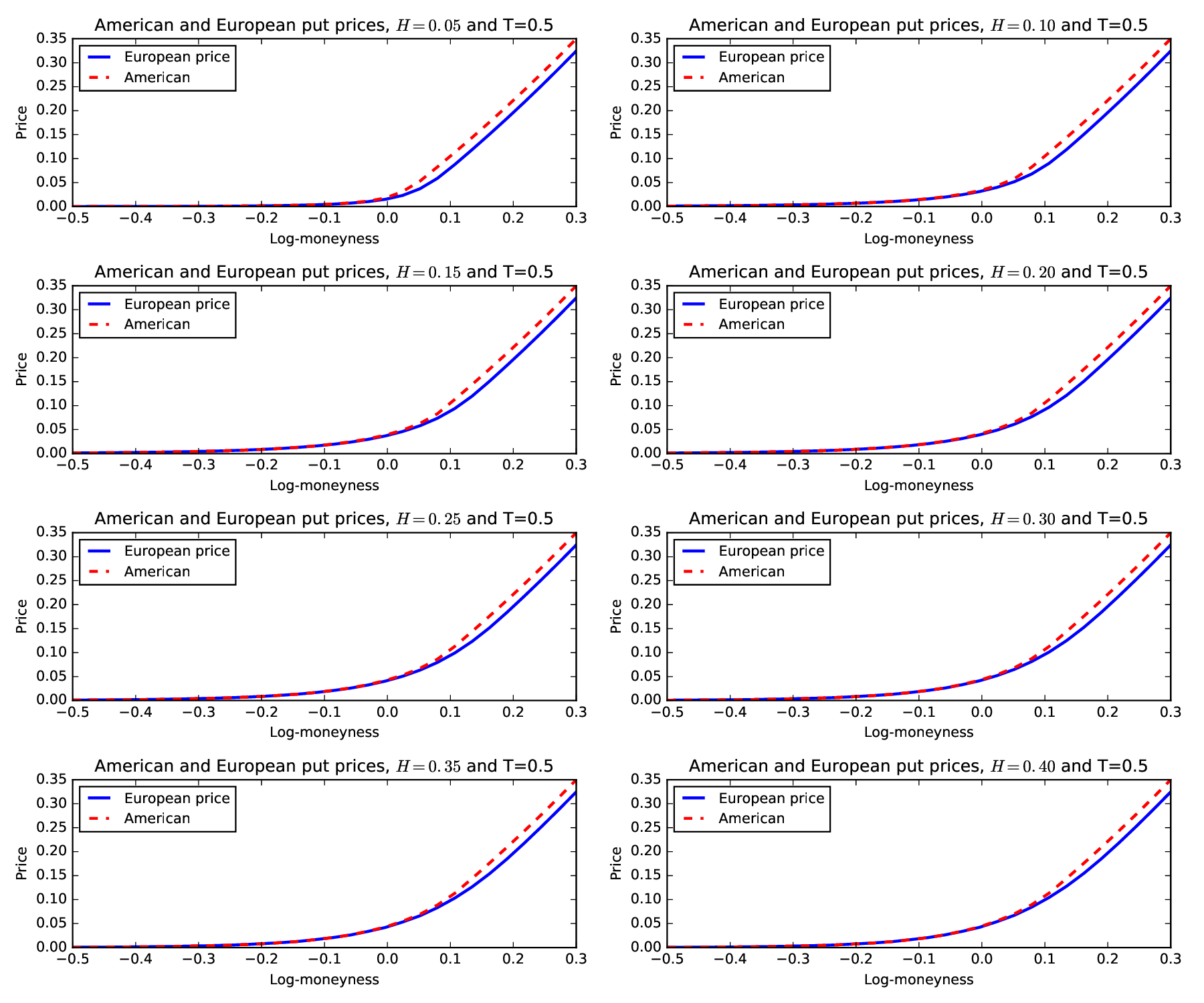}
\caption{American and European Put prices in the rough Bergomi model for different values of~$H$ 
and $(\nu, \rho, \xi_0) = (1, -1, 0.04)$ with~$26$ time steps.}
\label{fig: american}
\end{figure}

\newpage
\appendix
\section{Riemann-Liouville operators}\label{sec:Riemann-Liouville operators}
We review here fractional operators and their mapping properties. 
We follow closely the excellent monograph by Samko, Kilbas and Marichev~\cite{SKM93}, 
as well as some classical results by Hardy and Littlewood~\cite{HL28}. However, we introduce a modification in their definition, so that the condition $f(0)=0$ is not necessary as opposed to the original definition in Hardy and Littlewood~\cite{HL28}

\subsubsection{Riemann-Liouville fractional operators}
\begin{definition}
For $\lambda \in (0,1)$, $\alpha\in\mathfrak{R}^{\lambda}$
the left Riemann-Liouville fractional operator is defined on~$\Cc^{\lambda}(\TT)$ as
\begin{equation}\label{eq:RiemannLiouvilleoperator}
(I^\alpha f)(t) :=
\begin{cases}
\displaystyle\frac{1}{\Gamma(\alpha)}\int_0^t \frac{f(s)-f(0)}{(t-s)^{1-\alpha}}\D s, & \text{for }\alpha \in (0,1-\lambda),\\
\displaystyle\left(\frac{\D }{\D t}I^{1+\alpha}f\right)(t)
 = \frac{1}{\Gamma(1+\alpha)}\frac{\D}{\D t}\int_0^t(t-s)^\alpha (f(s)-f(0))\D s, & \text{for }\alpha \in (-\lambda, 0).
\end{cases}
\end{equation}
\end{definition}

\begin{theorem}\label{thm: integral map}
For any $f\in\Cc^\lambda(\TT)$, with $\lambda\in (0,1)$ and $\alpha\in\Rkl$,
$I^{\alpha}f \in \Cc^{\lambda+\alpha}(\TT)$.
In particular, there exists $C>0$ such that
$|(I^{\alpha}f)(t)| \leq C t^{\alpha+\lambda}$ for any $t \in \TT$.
\end{theorem}
\begin{proof}
We first consider $\alpha>0$, then we may easily represent
$$
(I^{\alpha}f)(t)
 = \frac{1}{\Gamma(\alpha)}\int_{0}^{t}\frac{f(u)-f(0)}{(t-u)^{1-\alpha}}\D u.
$$
Since $f\in\Cc^{\lambda}(\TT)$, we obtain
$\displaystyle
|(I^{\alpha}f)(t)|\leq \frac{|f|_\lambda}{\Gamma(\alpha)}\int_{0}^{t}\frac{u^\lambda \D u}{(t-u)^{1-\alpha}}$,
and hence
$$
|(I^{\alpha}f)(t)|
 \leq \frac{\Gamma(2+\lambda)|f|_\lambda}{(1+\lambda)\Gamma(\alpha+\lambda+1)}t^{\alpha+\lambda},
$$
which proves the estimate for $|I^{\alpha}f|$.
Next, we prove that $I^{\alpha}f\in\Cc^{\lambda+\alpha}(\TT)$. 
For this, introduce $\phi(t):=f(t)-f(0)$ and consider $t,t+h\in\TT$ with $h>0$,
\begin{align}\label{eq:I_alpha_estimate}
(I^{\alpha}f)(t+h) - (I^{\alpha}f)(t) & = \frac{1}{\Gamma(\alpha)}\left(\int_{-h}^{t}\frac{\phi(t-u)}{(u+h)^{1-\alpha}}\D u
 - \int_{0}^{t}\frac{\phi(t-u)}{u^{1-\alpha}}\D u\right)\\
 & = \frac{\phi(t)}{\Gamma(1+\alpha)}\left[(t+h)^\alpha-t^\alpha\right]+\frac{1}{\Gamma(\alpha)}\left(\int_{-h}^{0}\frac{\phi(t-u)-\phi(t)}{(u+h)^{1-\alpha}}\D u\right)\nonumber \\
 & + \frac{1}{\Gamma(\alpha)}\left(\int_{0}^{t}\left[(u+h)^{\alpha-1} - u^{\alpha-1}\right]\left[\phi(t-u)-\phi(t)\right]\D u\right)
 =: J_1+J_2+J_3.\nonumber
\end{align}
We first consider $J_1$. If $h>t$, then
$$
|J_1|\leq\frac{|f|_\lambda}{\Gamma(1+\alpha)}t^\lambda\left[(t+h)^\alpha-t^\alpha\right]
 \leq C h^{\lambda+\alpha}.
$$
On the other hand, when $0<h<t$, since $(1+u)^\alpha-1\leq \alpha u$ for $u>0$, then
$$
|J_1|
\leq \frac{|f|_\lambda}{\Gamma(1+\alpha)}t^{\lambda+\alpha}\left|\left(1+\frac{h}{t}\right)^{\alpha}-1\right|
\leq C h t^{\lambda+\alpha-1}\leq C h^{\lambda+\alpha}.
$$
For $J_2$, since $f\in\Cc^{\lambda}(\TT)$, we can write
$$
|J_2|\leq\frac{|f|_\lambda}{\Gamma(\alpha)}\int_{-h}^0\frac{|u|^\lambda}{(u+h)^{1-\alpha}}
\leq C h^{\lambda+\alpha}.
$$
Finally,
$$
|J_3|\leq\frac{|f|_\lambda}{\Gamma(\alpha)}\int_0^{t}u^\lambda[u^{\alpha-1}-(u+h)^{\alpha-1}]\D u
 = \frac{|f|_\lambda}{\Gamma(\alpha)}h^{\lambda+\alpha}\int_0^{t/h}u^\lambda[u^{\alpha-1}-(u+1)^{\alpha-1}]\D u.
$$
Hence, if $t\leq h$, then $|J_3|\leq C h^{\lambda+\alpha}$. 
Likewise, if $t>h$ and $\lambda+\alpha<1$, 
then $|J_3|\leq C h^{\lambda+\alpha}$ since
$$
\left|u^{\alpha-1}-(u+1)^{\alpha-1}\right|
 = u^{\alpha-1}\left[1-\left(1+\frac{1}{u}\right)^{\alpha-1}\right]\leq C u^{\alpha-2}.
$$
Thus, we have shown that $I^{\alpha}f$ satisfies the $(\lambda+\alpha)$-H\"older condition and belongs to $\Cc^{\lambda+\alpha}(\TT)$ in the case $\alpha >0$. The conclusion for $\alpha<0$ follows by taking $g(u):=u^\alpha$ in the proof of Proposition \ref{prop:GContinuous} in Appendix \ref{sect:app_C}.
\end{proof}

\begin{corollary}\label{cor: mapping Holder regularity}
For any $\lambda \in (0,1)$ and $\alpha\in\Rkl$,
$I^\alpha $ is a continuous operator from $\Cc^{\lambda}(\TT)$ to $\Cc^{\lambda+\alpha}(\TT)$.
\end{corollary}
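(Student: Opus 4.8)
The plan is to read off the corollary from Theorem~\ref{thm: integral map}. Since $I^\alpha$ is a linear operator between the normed spaces $\Cc^\lambda(\TT)$ and $\Cc^{\lambda+\alpha}(\TT)$, continuity is equivalent to boundedness, so it suffices to exhibit a constant $C=C(\lambda,\alpha)>0$ with $\|I^\alpha f\|_{\lambda+\alpha}\le C\|f\|_\lambda$ for all $f\in\Cc^\lambda(\TT)$.

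First I would fix $f\in\Cc^\lambda(\TT)$ and invoke Theorem~\ref{thm: integral map} to split $(I^\alpha f)(t)=\frac{f(0)}{\Gamma(1+\alpha)}t^\alpha+\psi(t)$, where both pieces depend linearly on $f$: the evaluation $f\mapsto f(0)$ is linear and bounded by $\|f\|_\infty\le\|f\|_\lambda$, while $\psi$ is an integral transform of $f$ that is visibly linear in $f$. The first piece is the scalar $f(0)$ times one fixed function and is controlled by $|f(0)|\le\|f\|_\lambda$ up to a constant (it vanishes altogether when $f(0)=0$, which is the relevant case for the Brownian drivers above). For $\psi$ I would revisit the proof of Theorem~\ref{thm: integral map} and note that all bounds obtained there --- the pointwise estimate $|\psi(t)|\le C\,t^{\lambda+\alpha}$ and the increment estimates $|J_1|,|J_2|,|J_3|\le C\,h^{\lambda+\alpha}$ --- are of the shape (constant depending only on $\lambda,\alpha$) times $|f|_\lambda$. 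Summing these gives $\|\psi\|_{\lambda+\alpha}=|\psi|_{\lambda+\alpha}+\|\psi\|_\infty\le C|f|_\lambda\le C\|f\|_\lambda$, and the triangle inequality in $\Cc^{\lambda+\alpha}(\TT)$ then yields $\|I^\alpha f\|_{\lambda+\alpha}\le C\|f\|_\lambda$.

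The step that requires care is the middle one: one must confirm that every constant produced inside the proof of Theorem~\ref{thm: integral map} depends on $f$ only through the explicit factor $|f|_\lambda$, uniformly across the case distinctions ($0<h<t$ versus $h\ge t$, and $\alpha\ge 0$ versus $\alpha<0$) employed there. This is a matter of inspection rather than of fresh estimates, so I do not expect a genuine obstacle --- the corollary is essentially the quantitative, linear-in-$f$ restatement of Theorem~\ref{thm: integral map}.
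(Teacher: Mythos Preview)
Your proposal is correct and follows essentially the same route as the paper: both use linearity of $I^\alpha$, invoke the decomposition $(I^\alpha f)(t)=\frac{f(0)}{\Gamma(1+\alpha)}t^\alpha+\psi(t)$ from Theorem~\ref{thm: integral map}, and bound each piece by a constant times $\|f\|_\lambda$. Your observation that the constants in the proof of Theorem~\ref{thm: integral map} depend on~$f$ only through $|f|_\lambda$ is precisely what the paper's terse argument relies on implicitly.
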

\begin{proof}
It is clear that $I^\alpha $ is a linear operator. 
From Theorem~\ref{thm: integral map},
$\| I^\alpha f\|_{\alpha+\lambda}
 \leq  C_1\| f\|_\lambda\|(\cdot)^{\alpha+\lambda}\|_{\lambda+\alpha}
\leq C\| f\|_{\lambda}$,
since $|f|_\lambda\leq \| f\|_\lambda$.
Therefore $I^\alpha $ is also bounded and hence continuous.
\end{proof}

\section{Discrete convolution}\label{sec: convolution}
\begin{definition}\label{def: discrete convolution}
For $\mathrm{a},\mathrm{b}\in\RR^n$, 
the discrete convolution operator $\ast:\RR^n\times \RR^n\to\RR^n$ is defined as
$$
(\mathrm{a} * \mathrm{b})_{i} := \sum_{m=0}^{i} \mathrm{a}_{m} \mathrm{b}_{i-m},\quad i=0,\ldots,n-1.
$$
\end{definition}
When simulating $\Gg^\alpha W$ on the uniform partition~$\Tt$, the scheme reads
$$
(\Gg^\alpha W)^j(t_i)
 = \sum_{k=1}^{i}g(t_{i} - t_{k-1})\xi_{k}
 = \sum_{k=1}^{i}g(t_{k})\zeta_{j,k-i+1},
\qquad \text{for }i=1,\ldots,n,
$$
which has the form of the discrete convolution in Definition~\ref{def: discrete convolution}. 
Rewritten in matrix form, 
$$\begin{pmatrix}
g(t_{1}) & 0 & \cdots & 0\\
g(t_{2}) & g(t_{1}) & \cdots & 0\\
\vdots &\ddots &\ddots & 0\\
g(t_{n}) & g(t_{n-1}) & \cdots &   g(t_{1})
\end{pmatrix}
\begin{pmatrix}
\zeta_1\\
\vdots\\
\zeta_n
\end{pmatrix},
$$
it is clear that this operator yields a complexity of order~$\Oo(n^2)$, which can be improved drastically.
\begin{definition}\label{DFT}
The Discrete Fourier Transform (DFT) of a sequence $\cm:=\left(c_0,c_1,...,c_{n-1}\right)\in\mathbb{C}^n$ is given by 
$$
\widehat{f}(\cm)[j] := \sum_{k=0}^{n-1}c_k \exp\left(-\frac{2\I \pi j k}{n}\right),
\qquad\text{for }j=0,\ldots,n-1,
$$ 
and the Inverse DFT of~$\cm$ is given by 
$$
f(\cm)[k] := \frac{1}{n}\sum_{j=0}^{n-1}c_j \exp\left(\frac{2\I\pi j k}{n}\right),
\qquad\text{for }k=0,\ldots,n-1.
$$ 
\end{definition}
In general, both transforms require a computational effort of order $\Oo(n^2)$,
but the Fast Fourier Transform (FFT) algorithm by Cooley and Tukey~\cite{CT65} 
exploits the symmetry and periodicity of complex exponentials of the DFT 
and reduces the complexity of both transforms to~$\Oo(n\log n)$.
\begin{theorem}\label{thm: convolution theorem}
For $\am, \bm\in\RR^n$, the identity 
$(\am\ast \bm)=f\bigl(\widehat{f}(\am)\bullet\widehat{f}(\bm)\bigr)$ holds, 
with~$\bullet$ the pointwise multiplication.
\end{theorem}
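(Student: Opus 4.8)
The plan is to prove the identity by a direct computation: expand the right-hand side through the definitions of the Discrete Fourier Transform and its inverse in Definition~\ref{DFT}, then collapse the resulting multiple sum using the orthogonality of the $n$-th roots of unity. The one elementary ingredient I would isolate first is the standard identity, valid for every integer $\ell$,
$$
\sum_{j=0}^{n-1}\E^{\frac{2\I\pi j\ell}{n}} = n\,\ind_{\{n\mid\ell\}},
$$
which is immediate from the geometric-series formula (the sum equals $n$ when $\E^{2\I\pi\ell/n}=1$, and $\frac{1-\E^{2\I\pi\ell}}{1-\E^{2\I\pi\ell/n}}=0$ otherwise).

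Next I would write $f\bigl(\widehat f(\am)\bullet\widehat f(\bm)\bigr)[k]=\frac1n\sum_{j=0}^{n-1}\widehat f(\am)[j]\,\widehat f(\bm)[j]\,\E^{\frac{2\I\pi jk}{n}}$, substitute $\widehat f(\am)[j]=\sum_{p}\am_p\E^{-\frac{2\I\pi jp}{n}}$ and $\widehat f(\bm)[j]=\sum_{q}\bm_q\E^{-\frac{2\I\pi jq}{n}}$, interchange the (finite) summations, and apply the lemma with $\ell=k-p-q$. This forces $p+q\equiv k\pmod n$, so the triple sum collapses to $\sum_{p+q\equiv k}\am_p\bm_q=\sum_{p=0}^{k}\am_p\bm_{k-p}+\sum_{p=k+1}^{n-1}\am_p\bm_{k-p+n}$, whose first term is exactly $(\am\ast\bm)_k$ of Definition~\ref{def: discrete convolution}. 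Finally, since $\am,\bm$ are real the left-hand side is real, so one may discard the imaginary part, or simply note that it cancels through the conjugate symmetry $\widehat f(\am)[n-j]=\overline{\widehat f(\am)[j]}$; this completes the bookkeeping.

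The one genuine obstacle — and the step I would treat most carefully — is the wrapped remainder $\sum_{p=k+1}^{n-1}\am_p\bm_{k-p+n}$: the $n$-point DFT product always produces the \emph{cyclic} convolution, whereas Definition~\ref{def: discrete convolution} defines the acyclic (lower-triangular) one, and these do not coincide for arbitrary $\am,\bm\in\RR^n$. They do agree once the vectors are zero-padded to length at least $2n-1$ before transforming, which is precisely how the FFT is applied in the algorithms of Section~\ref{sec:Applications}: with that padding the wrapped indices $k-p+n$ fall into the zero block, the extra sum vanishes, and one is left with $f\bigl(\widehat f(\am)\bullet\widehat f(\bm)\bigr)[k]=(\am\ast\bm)_k$ for $k=0,\ldots,n-1$. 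I would therefore either state the theorem for the zero-padded vectors or add this padding as a standing convention before invoking it; everything else is routine.
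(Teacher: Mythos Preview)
The paper states Theorem~\ref{thm: convolution theorem} without proof; it is quoted as a classical fact about the discrete Fourier transform, with the only follow-up sentence being the remark that it reduces the complexity of discrete convolution to $\Oo(n\log n)$. There is therefore no argument in the paper to compare your proposal against.

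Your direct computation via the orthogonality relation $\sum_{j=0}^{n-1}\E^{2\I\pi j\ell/n}=n\,\ind_{\{n\mid\ell\}}$ is the standard proof and is correct as far as it goes. You are also right to flag the cyclic-versus-acyclic discrepancy: with Definition~\ref{def: discrete convolution} taken literally (the lower-triangular sum $\sum_{m=0}^{i}\am_m\bm_{i-m}$) and Definition~\ref{DFT} taken on length-$n$ vectors, the identity in the theorem is the \emph{circular} convolution identity, and the wrapped tail $\sum_{p=k+1}^{n-1}\am_p\bm_{k-p+n}$ does not vanish for generic $\am,\bm\in\RR^n$. The paper silently relies on the zero-padding convention (or equivalently on the fact that in Algorithm~\ref{algo: convolution for rough vol} only the first $n$ entries of a length-$\geq 2n-1$ transform are retained), so the statement is to be read in that sense. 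Your suggested fix---either pad before transforming or add the convention explicitly---is exactly what is needed, and with it your argument is complete.
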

This in particular implies that the complexity of the discrete convolution is reduced to~$\Oo(n\log n)$ by FFT.
\begin{algorithm}[FFT Discrete convolution for $\Bb$]\label{algo: convolution for rough vol}
On the equidistant grid~$\Tt$, 
\begin{enumerate}
\item draw a random matrix
$\{\zeta_{j,i}\}_{\substack{j=1,\ldots,M\\i=1,\ldots,n}}$ 
such that $\mathbb{V}(\zeta_{j,i})=1$;
\item\label{algo: step2} define the vectors
$\gr:=(g(t_i))_{i=1,\ldots,n}$
and
$\zeta_j:=(\zeta_{j,i})_{i=1,\ldots,n}$, for $j=1,\ldots,M$;
\item \label{algo: step3} using FFT, compute 
$\varphi_j:=\widehat{f}(\gr)\cdot\widehat{f}(\zeta_j)$, for $j=1,\ldots,M$;
\item \label{algo: step4}
simulate $M$ paths of~$(\Gg^\alpha W)$ using FFT, 
as $(\Gg^\alpha W)^j(\Tt)=\sqrt{\frac{T}{n}}f(\varphi_j)$ for $j=1,\ldots,M$.
\end{enumerate}
\end{algorithm}
In Step~\ref{algo: step2} we may replace the evaluation points~$\gr$ by any optimal evaluation point
 $\{g\left(t_i^*\right)\}_{i=1}^n$ as in~\eqref{eq: optimal eval}. 
Many packages offer a direct implementation of the discrete convolution such as \texttt{numpy.convolve} in~\texttt{Python}. 
The user then only needs to pass the arguments~$\gr$ and~$\xi_j$, 
and Steps~\ref{algo: step3} and~\ref{algo: step4} 
are computed automatically (using efficient FFT techniques) by the function. 
Although the FFT step is the heaviest computation on the simulation of rough volatility models, 
the actual time grid~$\Tt$ is not specially large ($n\ll 1000$). 
Hence, the fastest FFT for very large~$n$ is not essential, 
as the implementation is run on smaller time grids. 
In this aspect we find that \texttt{numpy.convolve} is a very competitive implementation.


\section{Proof of Proposition \ref{prop:GContinuous}}\label{sect:app_C}

In this section, we present a proof of Proposition \ref{prop:GContinuous}. We first consider the case $\alpha \in (-\lambda , 0)$ with $0<\lambda \leq 1$. Fix $f\in \mathcal{C}^\lambda(\mathbb{I})$ and $g \in \mathcal{L}^\alpha$. As our first step, we derive a useful representation akin to \cite[Equation (13.1)]{SKM93}, but for the operator $\mathcal{G}^\alpha$, which amounts to $\mathcal{G}^\alpha f(0)=0$ and
\begin{equation}\label{variant_of_13.1}
\mathcal{G}^\alpha f(t) =  (f(t)-f(0))g(t) - \int_0^t (f(t)-f(s))\frac{\mathrm{d}}{\mathrm{d}t}g(t-s)\mathrm{d}s,
\end{equation}
for $t\in (0,1]$ (note that $g(0)$ need not be defined, but our assumptions guarantee $\mathcal{G}^\alpha f(0)=0$). 
To show that~\eqref{variant_of_13.1} holds, we look at the difference quotients for the definition of~$\mathcal{G}^\alpha f$ in~\eqref{eq:GFO}. For any $t\in[0,1)$ and any small enough $h>0$, a bit of rewriting leads to the equality
\begin{align}\label{eq:difference}
 &\int_{0}^{t+h}(f(s)-f(0))g(t+h-s)\D s - \int_{0}^{t}(f(s)-f(0))g(t-s)\D s  \\  &\quad=  \int_0^t (f(s)-f(t))\bigl( g(t+h-s)-g(t-s)\bigr)\D s \nonumber \\
& \qquad + (f(t)-f(0))\biggl( \int_0^{t+h} g(t+h-s)\mathrm{d}s  - \int_0^t g(t-s) \mathrm{d}s \biggr)   + \int_t^{t+h} (f(s)-f(t))g(t+h-s)\mathrm{d}s.\nonumber
\end{align}
In the second term on the right-hand side of~\eqref{eq:difference}, a change of variables gives
\begin{equation*}
(f(t)-f(0))\biggl(\int_0^{t+h} g(t+h-s)\mathrm{d}s  - \int_0^t g(t-s) \mathrm{d}s \biggr) = (f(t)-f(0))\int_{-h}^0 g(t-r)\mathrm{d}r.
\end{equation*}
Looking at the third term on the right-hand side of~\eqref{eq:difference}, our assumptions yield
\[
\Bigl\vert \int_t^{t+h} (f(s)-f(t))g(t+h-s)\D s \Bigr\vert \leq  \int_t^{t+h} C_1 h^\lambda C_2 h^\alpha \D s \leq C_0 h^{1+\lambda + \alpha} = o(h),
\]
as $h$ tends to zero, since $\lambda +\alpha\in (0,1)$. As for the first term, we have
\begin{align*}
\left\vert  (f(t)-f(s))\frac{g(t+h-s)-g(t-s)}{h} \right \vert
&\leq \frac{ C_1 |t-s|^\lambda}{h}\int_{t-s}^{t-s+h} g^\prime(r) \D r
 \leq C_1 (t-s)^\lambda C_2 |t-s|^{\alpha -1} =  C_0 (t-s)^{\lambda+\alpha-1},
\end{align*}
for all $s\in (0,t)$ and $h>0$, where $\lambda + \alpha -1 \in (-1,0)$, so the right-hand side is in $L^1([0,t])$ and hence we can apply the dominated convergence theorem. Specifically, dividing by $h$ in~\eqref{eq:difference} and sending $h$ to zero, we obtain~\eqref{variant_of_13.1} in the limit, by using dominated convergence on the first term, Lebesgue's differentiation on the second term, and noting that the third term vanishes.

Having established~\eqref{variant_of_13.1}, we can now use it to obtain the desired H\"older estimates. We begin with the first term on the right-hand side of~\eqref{variant_of_13.1}. Let $\phi(t):=(f(t)-f(0))g(t)$, for $t\in \mathbb{I}$, where we note that $|(f(t)-f(0))g(t)|\leq C t^{\lambda + \alpha}$ with  $\lambda + \alpha \in (0,1)$, so $\phi(0)=0$ is well defined. 
Rewriting, and using the assumptions on~$f$ and~$g$, we get, for every $t\in \mathbb{I}$ and $h\in (0,1-t]$,
\begin{align}\label{eq:first_term_holder}
\vert \phi(t+h)-\phi(t) \vert &\leq |f(t)-f(0)| \int_{t}^{t+h}|g^\prime(r)|\mathrm{d}r + |g(t+h)||f(t+h)-f(t)| \\
& \leq  C \vert f \vert_\lambda t^{\lambda + \alpha} (t+h)^\alpha \bigl( (t+h)^{-\alpha} - t^{-\alpha} \bigr) +   C \vert f \vert_\lambda h^{\lambda + \alpha} \leq C^\prime  \vert f \vert_\lambda h^{\lambda +\alpha},\nonumber
\end{align}
where the last inequality follows by elementary considerations, as in the arguments on \cite[Chapter~1, Page~15]{Mus72}. 
The case $h\in [-t,0]$ is analogous. 

For the second term on the right-hand side of~\eqref{variant_of_13.1}, we can follow a procedure similar to the proof of \cite[Lemma 13.1]{SKM93}. Defining
\[
\varphi(t):=\int_0^t (f(t)-f(s))\frac{\mathrm{d}}{\mathrm{d}t}g(t-s)\mathrm{d}s = \int_0^t (f(t)-f(t-r))\frac{\mathrm{d}}{\mathrm{d}r}g(r)\mathrm{d}r  ,
\]
and rewriting things, for any $t\in\mathbb{I}$ and $h\in[-t,1-t]$, we arrive at
\begin{align}\label{eq:second_term_holder}
	\varphi(t+h)-\varphi(t) &= \int_0^t (f(t) - f(t-r))\bigl(g^\prime(r+h) - g^\prime(r)\bigr)\mathrm{d}r - \int_0^t  (f(t) - f(t-r))g^\prime(r+h)\mathrm{d}r \\
	& \quad\qquad+ \int_{-h}^t (f(t+h)-f(t-u)) g^\prime(u+h)\mathrm{d}u \nonumber\\
	&= \int_0^t (f(t) - f(t-r))\bigl(g^\prime(r+h) - g^\prime(r)\bigr)\mathrm{d}r - \int_0^t  (f(t+h) - f(t))g^\prime(r+h)\mathrm{d}r \nonumber\\
	& \quad\qquad+ \int_{-h}^0 (f(t+h)-f(t-u)) g^\prime(u+h)\mathrm{d}u := I_1 + I_2 + I_3. \nonumber
\end{align}
Without loss of generality, we assume $h>0$. For the first integral, a change of variables gives
\begin{align*}
	|I_1| &\leq \vert f \vert_\lambda \int_0^t r^\lambda \int_{r}^{r+h} |g^{\prime \prime} (u)|\mathrm{d}u \mathrm{d}r \leq C \vert f \vert_\lambda \int_0^t r^\lambda \bigl( r^{\alpha-1} - (r+h)^{\alpha-1}  \bigr) \mathrm{d}r \\
	& =  C \vert f \vert_\lambda h^{\lambda+\alpha -1} \int_0^t \bigl(\frac{r}{h}\bigr)^\lambda \Bigl( \bigl(\frac{r}{h}\bigr)^{\alpha-1} - \bigl(\frac{r}{h}+1\bigr)^{\alpha-1}  \Bigr) \mathrm{d}r = C \vert f \vert_\lambda h^{\lambda+\alpha } \int_0^{t/h} u^{\alpha+\lambda-1} \Bigl(1 -\bigl( 1+\frac{1}{u}\bigr)^{\alpha -1} \Bigr) \mathrm{d}u \\
	& \leq C \vert f \vert_\lambda h^{\lambda+\alpha } \Bigl( \int_0^{1} u^{\lambda+\alpha -1} \mathrm{d}u + (1-\alpha )\int_1^{\infty} u^{\lambda+\alpha -2} \mathrm{d}u \Bigr),
\end{align*}
where $\lambda+\alpha\in(0,1)$, so the final two terms on the right-hand side are finite. In the final line, we have used that the mapping $y\mapsto - (1+y)^{\alpha -1}+(\alpha-1)y$ is concave with a maximum value of $-1$ at $y=0$.

As regards the two remaining integrals $I_2$ and $I_3$, we see immediately that
\begin{align*}
	&|I_2| \leq C \vert f \vert_\lambda h^{\lambda} \int_0^\infty (r+h)^{\alpha -1} \mathrm{d}r= \frac{C}{|\alpha|} \vert f \vert_\lambda h^{\lambda+\alpha},\quad \text{and} \\
	&|I_3| \leq C \vert f \vert_\lambda	\int_{-h}^0 (u+h)^{\lambda+\alpha - 1} \mathrm{d}u = \frac{C}{|\lambda +\alpha|}  \vert f \vert_\lambda h^{\lambda + \alpha}.
\end{align*}
By linearity, the desired continuity of the operator 
$\mathcal{G}^\alpha:\mathcal{C}^\lambda(\mathbb{I})\rightarrow \mathcal{C}^{\lambda+\alpha}(\mathbb{I})$, for $\alpha\in(-\lambda, 0)$, now follows from~\eqref{variant_of_13.1}, ~\eqref{eq:first_term_holder}, and the three above estimates for~\eqref{eq:second_term_holder}.

It remains to consider $\alpha \in (0,1-\lambda) $. As before, recall $0<\lambda \leq 1$, and fix $f\in \mathcal{C}^\lambda(\mathbb{I})$ along with $g\in \mathcal{L}^\alpha$. Unlike above, $s\mapsto \frac{\mathrm{d}}{\mathrm{d}t}g(t-s)$ is now integrable on $\mathbb{I}$ which makes things go through more easily:~in particular, we can work directly with the definition of $\mathcal{G}^\alpha f$ in~\eqref{eq:GFO}, applying arguments analogous to~\eqref{eq:second_term_holder}. The case $g(u)=u^\alpha$ is already covered by the proof of Theorem \ref{thm: integral map}. For a general $g \in \mathcal{G}^\alpha$, we can  retrace those same steps, except that, in~\eqref{eq:I_alpha_estimate} and the subsequent estimates for $J_1$, $J_2$, and $J_3$, we must now invoke our control on $g$ and its derivatives (similarly to how we did it above for~\eqref{eq:second_term_holder} and the subsequent estimates of $I_1$, $I_2$, and $I_3$). This completes the proof of Proposition \ref{prop:GContinuous}.

\newpage


\end{document}